\newcommand{\Ran}{{\operatorname{Ran}}}
\newcommand{\RR}{{\mathbb{R}}}
\newcommand{\Xh}{{X^h}}
\newcommand{\Yh}{{Y^h}}
\newcommand{\Fh}{{F^h}}
\newcommand{\Vp} {\ensuremath{V_{\scriptscriptstyle X}}}%
\newcommand{\Vd} {\ensuremath{V_{\scriptscriptstyle Y}}}%
\newcommand{\Wp} {\ensuremath{W_{\scriptscriptstyle X}}}%
\newcommand{\Wd} {\ensuremath{W_{\scriptscriptstyle Y}}}%
\newcommand{\Vph} {\ensuremath{V_{\scriptscriptstyle X}^{h}}}%
\newcommand{\Vdh} {\ensuremath{V_{\scriptscriptstyle Y}^{h}}}%
\newcommand{\Wph} {\ensuremath{W_{\scriptscriptstyle X}^{h}}}%
\newcommand{\Wdh} {\ensuremath{W_{\scriptscriptstyle Y}^{h}}}%
\newcommand{\wwd} {\ensuremath{w_{\scriptscriptstyle Y}}}%
\newcommand{\vvd} {\ensuremath{v_{\scriptscriptstyle Y}}}%
\newcommand{\Wpt} {\ensuremath{\tilde{W}_{\scriptscriptstyle {X}}}}%
\newcommand{\Wdt} {\ensuremath{\tilde{W}_{\scriptscriptstyle {Y}}}}%
\newcommand{\Pd} {\ensuremath{P_{\scriptscriptstyle Y}}}%
\newcommand{\Pp} {\ensuremath{P_{\scriptscriptstyle X}}}%
\newcommand{\Qd} {\ensuremath{Q_{\scriptscriptstyle Y}}}%
\newcommand{\Qp} {\ensuremath{Q_{\scriptscriptstyle X}}}%
\newcommand{\optpar} [2] []% first argument optional within parentheses
  {\ensuremath{#2\ifempty{#1} {} {\!\left(#1\right)}}}
\newcommand{\optsub} [2] []% first argument optional becomes subindex
  {\ensuremath{#2\ifempty{#1} {} {_{#1}}}}
\newcommand{\optsuper} [2] []% first argument optional becomes superindex
  {\ensuremath{#2\ifempty{#1} {} {^{#1}}}}
\newlength{\boxwd}
\newlength{\boxht}
\newcommand{\genstack} [3] [c] {\settodepth{\boxht} {#2}\settowidth{\boxwd} {#2}%
\addtolength{\boxht} {2pt}\makebox[0pt] [l] {#2}%
\raisebox{-\boxht} {%
 \raisebox{-\height} {\makebox[\boxwd] [#1] {#3}}}%
}
\newcommand{\stack} [3] [c]{% math mode only
 \genstack[#1] {\ensuremath{#2}} {\ensuremath{#3}}
}
\newcommand{\stacksub} [3] [c]{% for e.g min such that
 \stack[#1]  {#2} {\scriptstyle{#3}}
}
\newcommand{\opmapsto}[1] {\stackrel{#1}{\mapsto}}
\newcommand{\Range} {\operatorname{Ran}}
\newcommand{\Kernel} {\operatorname{Ker}}
\newcommand{\linspan} [1] {\operatorname{Span}\{#1\}}
\newcommand{\st} {\,:\,}
\newcommand{\df} {\ensuremath{:=}}
\newcommand{\dx} [1] [x] {\,d#1}
\newcommand{\ifempty} [1] {\ifthenelse{\equal {#1} {} }}
\newcommand\iprod [3] [] {\optsub[\scriptscriptstyle#1]{\ensuremath{\langle#2,#3\rangle}}}
\newcommand\negprod [2]   {\iprod[-1]{#1}{#2}}
\newcommand\zeroprod [2]   {\iprod[0]{#1}{#2}}
\newcommand\oneprod [2]   {\iprod[1]{#1}{#2}}
\newcommand{\spectrum} [1] {\ensuremath{\sigma}{\left(#1\right)}}
\newcommand{\dOmega}{\ensuremath{\partial\Omega}}
\newcommand{\Lp} [2] [] {\optpar[#1] {L^{#2}}}                   % optional first parameter is domain
\newcommand{\Lone} [1] [] {\Lp[#1]{1}}
\newcommand{\Ltwo} [1] [] {\Lp[#1]{2}}
\newcommand{\Linfty} [1] [] {\Lp[#1]{\infty}}
\newcommand{\Hk} [2] [] {\optpar[#1] {H^{#2}}}
\newcommand{\Hone} [1] [] {\Hk[#1]{1}}
\newcommand{\Hnegone} [1] [] {\Hk[#1]{-1}}
\newcommand{\Htwo} [1] [] {\Hk[#1]{2}}
\newcommand{\Hzerok} [2] [] {\optpar[#1] {H_0^{#2}}}
\newcommand{\Hzeroone} [1] [] {\Hzerok[#1]{1}}
\newcommand{\Hzerotwo} [1] [] {\Hzerok[#1]{2}}
\newcommand{\Wk} [2] [] {\optpar[#1] {W^{#2}}}
\newcommand{\Wzerok} [2] [] {\optpar[#1] {W_0^{#2}}}
\newcommand{\Ck} [2] [] {\optpar[#1] {C^{#2}}}
\newcommand{\Cone} [1] [] {\Ck[#1]{1}}
\newcommand{\Cinfty} [1] [] {\Ck[#1]{\infty}}
\newcommand{\calK} {\ensuremath{\mathcal{I}}}
\newcommand{\Rn} [1] [n]{\ensuremath{\mathbb{R}^{#1}}}
\newcommand{\R} {\ensuremath{\mathbb{R}}}
\newcommand{\lap} {\Delta} % laplatian
\newcommand{\seminorm} [1] [\,] {\ensuremath{|{#1}|}}    % generic seminorm
\newcommand{\norm} [1] [\,] {\ensuremath{|\!|{#1}|\!|}}  % generic norm
\newcommand{\semnrm} [2] [\,]% 1st argument (optional) is argument, second name/domain
  {\ensuremath{\seminorm[{#1}]_{\scriptscriptstyle#2}}}
\newcommand{\nrm} [2] [\,]%
  {\ensuremath{|\!|{#1}|\!|_{{\scriptscriptstyle#2}}}}
\newcommand{\Lonenrm} [2] []%
  {\nrm[{#2}] {\Lone[#1]}}
\newcommand{\Ltwonrm} [2] []%
  {\nrm[{#2}] {\Ltwo[#1]}}
\newcommand{\Linftynrm} [2] []%
  {\nrm[{#2}] {\Linfty[#1]}}
\newcommand{\Honenrm} [2] []%
  {\nrm[{#2}] {\Hone[#1]}}
\newcommand{\Htwonrm} [2] []%
  {\nrm[{#2}] {\Htwo[#1]}}
\newcommand{\negnrm} [1] []%
  {\nrm[{#1}] {-1}}
\newcommand{\zeronrm} [1] []%
  {\nrm[{#1}] {0}}
\newcommand{\onenrm} [1] []%
  {\nrm[{#1}] {1}}
\newcommand{\twonrm} [1] []%
  {\nrm[{#1}] {2}}
\newcommand{\inftynrm} [1] []%
  {\nrm[{#1}] {\infty}}
\newcommand{\onesemnrm} [1] []%
  {\semnrm[{#1}] {1}}
\newcommand{\Honesemnrm} [2] []%
  {\semnrm[{#2}] {\Hone[#1]}}
\newcommand{\Htwosemnrm} [2] []%
  {\semnrm[{#2}] {\Htwo[#1]}}
\newtheorem{lem} {Lemma}
\newtheorem{prop} {Proposition}
\newtheorem{theo} {Theorem}
\def\qed{\unskip\nobreak\hfil\penalty50\hskip1.75em\null\nobreak\hfil
$\blacksquare$ {\parfillskip=0pt \finalhyphendemerits=0 \par}\medbreak}
\newcommand{\fe}{\ensuremath{\mathcal{P}_1}}
\newcommand{\uh} [1] [u] {\ensuremath{{#1}^{h}}}
\newcommand{\unbar} [1] [u] {\ensuremath{\underline{\mathrm{#1}}}}
\newcommand{\uhat} [1] [u] {\ensuremath{\hat{\mathrm{#1}}}}
\newcommand{\mat} [1] [M] {\ensuremath{\mathrm{#1}}}
\newcommand{\Qph} {\mat[\Qp^h]}
\newcommand{\Pph} {\mat[\Pp^h]}
\title{Numerical analysis of semilinear elliptic equations \\ with finite spectral interaction}
\author{Jos\'e Teixeira Cal Neto and Carlos Tomei }
\begin{document}
\maketitle

\begin{abstract}
We present an algorithm to solve $- \lap u - f(x,u) = g$ with Dirichlet boundary conditions in a bounded domain $\Omega$. The nonlinearities are non-resonant and have finite spectral interaction: no eigenvalue of $-\lap_D$ is an endpoint of $\overline{\partial_2f(\Omega,\RR)}$, which in turn only contains a finite number of eigenvalues. The algorithm is based in ideas used by Berger and Podolak to provide a geometric proof of the Ambrosetti-Prodi theorem and advances work by Smiley and Chun for the same problem.
\end{abstract}

\medbreak

{\noindent\bf Keywords:} Semilinear elliptic equations, finite element methods, Lyapunov-Schmidt decomposition.

\smallbreak

{\noindent\bf MSC-class:} 35B32, 35J91, 65N30.

\section{Introduction}\label{sec:intro}

In this paper, we describe  a numerical algorithm to solve
\begin{equation}\label{eq:problem}
  - \lap u(x) - f(x,u(x)) = g(x), \quad u|_{\partial \Omega} = 0.
\end{equation}
Here, the nonlinearity $f$ is
an \emph{appropriate} function, to be defined later, and the domain $\Omega \in \RR^n$ is open, bounded, connected and has Lipschitz boundary $\dOmega$.

Theoretical tools go hand in hand with numerical methods.
Local behavior at regular points concerns both the inverse function theorem and Newton's inversion algorithm. Homotopy arguments such as degree theory go along well with continuation methods.
The celebrated mountain pass lemma (\cite{rabinowitz:1986}) is the starting point of an algorithm presented in \cite{choimckenna:1993}. More recently, ideas used in computer assisted proofs were combined with the topological toolbox with striking effect by Breuer, McKenna and Plum (\cite{bmp:2003}). In this paper, we explore numerically a global Lyapunov-Schmidt decomposition.

More precisely, we consider a class of \Cone\ maps $F: X \to Y$  between Banach spaces. Split $X = \Wp \oplus \Vp$ and $Y = \Wd \oplus \Vd$ into closed \emph{horizontal} and \emph{vertical} subspaces. Define complementary projections $\Pd, \Qd:Y \to Y$ so that $\Ran\  \Pd = \Wd$ and $\Ran\  \Qd = \Vd$.  A map $F$ is \emph{flat} if, for each $x \in X$, $\Pd \circ F : x + \Wp \to \Wd$ is a diffeomorphism. Here $x + \Wp$ is the affine horizontal subspace obtained by translating $\Wp$ by $x$.

This stringent hypothesis gives rise to substantial geometric structure. Images under $F$ of affine horizontal subspaces, \emph{sheets}, intercept transversally affine vertical subspaces $y + \Vd, y \in Y$, at a unique point. Preimages under $F$ of affine vertical subspaces, \emph{fibers}, are submanifolds of $X$ diffeomorphic to $\Vp$. Indeed, $X$ is foliated by fibers, and each fiber intersects transversally each affine horizontal subspace at a unique point.

The bifurcation equations related to the decomposition for $F(x) = y$ are
\[ \Pd F(w + v) = \Pd y, \quad \Qd F(w + v) = \Qd y, \quad
w \in \Wp, \ v \in \Vp \]
and the first equation, by flatness, admits a unique solution $w(v)$ for each fixed $v$. Said differently, given $y \in Y$, $w(v)$ is the unique point of the fiber $F^{-1}(y + \Vd)$ in the affine horizontal space $v + \Wp$. Clearly, the fiber through $w(v)$ contains all solutions of $F(x) = y$.

In a nutshell, the algorithm first computes $w(v)$ based on the finite element method. The search for solutions of $F(x) = y$ then reduces to inverting a (computable) map between isomorphic finite dimensional subspaces $\Vp$ and $\Vd$.

We use piecewise linear finite elements. For the sake of sparsity, we exploit the decompositions of spaces $X$ and $Y$. Indeed, from flatness, a large part of the derivative $DF(u)$, taking \Wp\ to \Wd, is invertible. We extend this isomorphism to one from $X$ to $Y$ which is especially simple to code. The search for $w(v)$ becomes then a standard continuation method between given affine horizontal subspaces, with the advantage that computations are performed in the full spaces $X$ and $Y$.

The restriction of $F$ to a fixed fiber $\alpha$ now can be computed by a predictor-corrector algorithm. In more detail, $\Vp$ parametrizes $\alpha$ and, given $x \in \alpha$, a point $x + v, v\in \Vp$ corresponds to a unique point $\tilde{x} \in \alpha$ with the same height (i.e., $\Qp (x+v) = \Qp \tilde{x}$), obtained from $x+v$ by the same continuation method used to compute $w(v)$. We are then left with inverting a (constructible) function between finite-dimensional spaces.

The constructions rely on the assumption that the projections $\Qp$ and $\Qd$ are computable. In order to avoid unnecessary abstraction, we present the algorithm for the special case related to equation \eqref{eq:problem}. This allows us to discuss some implementation issues.
We consider the map $F(u)(x) = -\lap u(x) - f(x,u(x))$ between Sobolev spaces
$X = \Hzeroone[\Omega]$ and $Y = \Hnegone[\Omega] \simeq \Hzeroone[\Omega]$.
The nonlinearity $f$ is assumed to be \emph{appropriate}: \[
f\in \Cone[{\Omega}\times\RR],\quad \Ltwonrm{f(\cdot,0)}<\infty\quad\textrm{and}\quad \inftynrm[\partial_2 f]<\infty.
 \]
Here $\partial_2$ is the partial derivative with respect to the second variable. Let $I=[a,b]$ be an interval  containing $\overline{\partial_2f(\Omega,\RR)}$.
Set $\Vp = \Vd$ to be the direct sum of the (maximal) invariant subspaces associated to the eigenvalues  in $[a,b]$ of $-\lap_D$. Finally, let $\Wp = \Vp^\perp$ and $\Wd = \Vd^\perp$ in $X$ and $Y$ respectively. As we shall see in Section \ref{sec:estimates}, $F: X \to Y$ is flat with respect to this decomposition.

The search for $w(v)$ is robust and globally stable: errors self-correct in the spirit of Newton-type iterations, and the linear operators which require inversion are both uniformly bounded and uniformly coercive.
Searching for solutions in the fiber is not necessarily an easy task. When $\dim \Vp = \dim \Vd = 1$, one needs to invert a function from $\RR$ to $\RR$, and root solvers abound. For higher dimensions, matters are harder. For the two-dimensional case, we present an example in Section \ref{subsec:int2lams}.

The history of semilinear elliptic theory, together with some computational aspects, is very well described in \cite{bmp:2003}.
Here we emphasize some techniques which are relevant to our text. A good introduction to computer assisted proofs in a related context is \cite{plum:2009}.

Hammerstein (\cite{hammerstein:1930}) and Dolph (\cite{dolph:1949}) showed that, if $\overline{\partial_2f(\Omega,\RR)}$ does not contain any eigenvalue of $-\lap_D$, the map $F: X \to Y$ is a (global) diffeomorphism. For the choice $\Vp = \Vd= \{0\}$, $F$ is trivially flat. Numerical inversion might proceed by standard continuation methods, requiring inversion of $-\lap v(x) - \partial_2f(x,u(x)) v(x) = h(x)$, with Dirichlet boundary conditions.

In the autonomous case $f(x,u)=f(u)$, Ambrosetti and Prodi (\cite{ambrosetti:1972}) presented a thorough analysis of a situation in which $F(u)=g$ admits multiple solutions.
Their result, immediately amplified by Manes and Micheletti (\cite{manes:1973}), essentially states that if $f$ is convex and $\overline{f'(\RR)}$ only contains the smallest eigenvalue $\lambda_1$ of $-\lap_D$, then $g$ can only have 0, 1 or 2 preimages.
Later, the Ambrosetti-Prodi map $F$ was given a novel geometric description by Berger and Podolak (\cite{berger:1974}): for the  Lyapunov-Schmidt decomposition $\Vp = \Vd = \linspan{\varphi_1},$ they showed that $F$ is flat. Here $\varphi_1$ is a positive eigenfunction associated to $\lambda_1$.
Their proof uses a coercive bound $||DF(w+v)|| \ge C ||w||$, uniform in $v$. They then showed that each fiber $\alpha$, the preimage under $F$ of a vertical affine subspace, is a differentiable curve. Moreover, the restriction of $F$ to each $\alpha$ becomes $s \mapsto - s^2$, after a change of variable. Since fibers foliate $X$, $F$ is a global fold: (global) changes of variables from $X$ and $Y$
to a common space $Z$ convert $F$ into $\tilde{F}: Z \to Z$ where $Z = V \oplus \tilde{W}$  given by $\tilde{F}(s,r) \mapsto (-s^2,r)$.

Hess (\cite{hess:1980}) extended the result by Ambrosetti and Prodi to the nonautonomous case. The Lyapunov-Schmidt decomposition in this context seems to have been established by Smiley and Chun, who also realized its potential for numerics: solving $F(u)=g$ boils down to solving the equation restricted to each fiber (\cite{smiley:1996}, \cite{smiley:1998},\cite{smiley:2000}).
In these papers, the authors are concerned with approximating the bifurcation equations, i.e., the restriction of $F(u) = g$ to the fiber $\alpha_g$ whose image contains $g$, using finite element methods. To solve the inversion problem of $F$ restricted to a given fiber, Smiley and Chun (\cite{smiley:2001}) developed a general solver for locally Lipschitz maps from $\RR^n$ to $\RR^n$, and provided examples (\cite{smiley:2001b}). Our algorithm, on the other hand, computes a point in $\alpha_g$, for arbitrary $g$ and fixed affine horizontal subspace. As a byproduct, it yields a (stable) procedure to move along $\alpha_g$.

In \cite{podolak:1976}, Podolak considered fibers for different nonlinearities. In \cite{MST:1997}, fibers were used to show that the map $G(u(t)) = u'(t) + u^3(t) - u(t)$ is a global cusp from the space of periodic functions in $\Cone([0,1])$ to $C([0,1])$: after global changes of variables, $G$ becomes $(x,y) \mapsto (x^3 - xy,y)$.

In Section \ref{sec:geo}, the consequences of flatness are presented in the general setting of a map between Banach spaces. The techniques are standard and the reader is invited to skip the section if he feels comfortable with the implications of flatness stated above. For  equation \eqref{eq:problem}, flatness follows from the coercive bound proved in Section \ref{sec:estimates}.
 The algorithm is described, first theoretically and then in more concrete terms, in Sections \ref{sec:algo} and \ref{sec:fem}. We finish with some examples in Section \ref{sec:eg}.

The authors gratefully acknowledge support from CAPES, CNPq and FAPERJ.

%\section{Notation}
%
%Throughout this text,  $\Omega$ denotes an \emph{open}, \emph{simply connected}, \emph{bounded} set of \Rn\ (mostly $n=1$ or $n=2$) with smooth boundary \dOmega\, except at finitely many points, where it is \emph{Lipschitz}.
%%For technical results for this class of boundaries for refer to \cite{necas:book}.
%For real valued functions on $\Omega$, the bilinear forms
%\[
%\iprod[0]{u}{v}=\int_{\Omega}uv,\quad
%\iprod[1]{u}{v}=\int_{\Omega}\grad u\cdot \grad v,
%\]
%give rise to the norm and seminorm
%\[
%\nrm[u]{0}=\zeroprod{u}{u}^{1/2},\quad \semnrm[u]{1}=\oneprod{u}{u}^{1/2}.
%\]
%%For more general exponents $1\le p<\infty,$
%%\[
%%\semnrm[u]{j,p}^p= \sum_{|\alpha|=j}\int_{\Omega}\norm[D^{\alpha}u]^{p},\quad\nrm[u]{j,p}^p= \sum_{i\le j} \semnrm[u]{i,p}^p,
%%\]
%%and
%%\[
%%\semnrm[u]{j,\infty}= \sum_{|\alpha|=j}\operatorname{ess\ sup}_{\Omega}\norm[D^{\alpha}u],\quad \nrm[u]{j,\infty}= \sum_{i\le j}\semnrm[u]{i,\infty},
%%\]
%%where for $\alpha=(\alpha_1,\ldots,\alpha_n)$ a multiindex, $D^{\alpha}u=\partial_1^{\alpha_1}\cdots\partial_n^{\alpha_n}u$.
%Whenever an operator $F$ is said to be differentiable, it is understood to be in the Fréchet sense.
%The partial derivative of $F$ with respect to, say, $u$ is denoted as $\partial_uF$.
%%A generic constant (not always the same one) will be denoted simply by $C$ and, unless otherwise stated, is assumed to depend only upon the dimension of the space and the domain $\Omega$.

\section{Geometry of flat maps}\label{sec:geo}

Let $X$ and $Y$ be Banach spaces which split as direct sums  of \emph{horizontal} and \emph{vertical} subspaces, $X=\Wp\oplus\Vp$ and $Y=\Wd\oplus\Vd$. We assume all four subspaces to be closed and define the pairs of (bounded) complementary projections $\Pp + \Qp= I_X$ and
$\Pd + \Qd= I_X$, where $\Pp$ and $\Pd$ (resp. $\Qp$ and $\Qd$) project on horizontal (resp. vertical) subspaces.
Sets of the form $x+ \Wp$ (resp. $y + \Wd$) or $x+ \Vp$ (resp. $y + \Vd$) will be denoted by horizontal and vertical \emph{affine subspaces}.

For $v\in\Vp,$ the \emph{projected restriction}
\[ F_v:\Wp\to\Wd, \quad F_v(w)=\Pd F(w+v)\]
acts between horizontal subspaces.
A  $\Cone$ map $F: X \to Y$ is \emph{flat} with respect to a decomposition of $X$ and $Y$ as above if, for any $v \in \Vp$,  the associated projected restriction $F_v$ is a diffeomorphism.
Thus, $F$ takes horizontal affine subspaces $x+ \Wp$  injectively to their images, which are graphs of functions from $\Wd$ to $\Vd$: the surfaces $F(x+ \Wp)$ are called \emph{sheets}.

The situation is familiar: horizontal variables are trivialized by a change of variables and vertical variables are the unknowns of the bifurcation equations.
This is the content of the next proposition.

\begin{prop}\label{prop:globaldif} Let $F: X \to Y$ be flat for  decompositions of $X$ and $Y$ as above.
Then the function \[ \Phi:\tilde{X}=\Wd\oplus\Vp \to \Wp \oplus \Vp, \quad \Phi(z,v) = ((F_v)^{-1}(z),v)\]
is a $\Cone$ diffeomorphism such that $\tilde{F}=F \circ \Phi:\tilde X \to Y$ is $\tilde{F}(z,v)=(z,\phi(z,v))$ for a $\Cone$ function $\phi:\tilde X\to\Vd$.
\end{prop}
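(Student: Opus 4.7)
The plan is to verify bijectivity of $\Phi$ by hand, produce an explicit $C^1$ inverse, and then invoke the inverse function theorem (Banach version) together with block-triangular invertibility of its derivative to conclude that $\Phi$ itself is $C^1$. The formula for $\tilde F$ then falls out of a direct computation using the definition of $F_v$.

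First I would observe that, since flatness guarantees $(F_v)^{-1}:\Wd\to\Wp$ exists for every $v\in\Vp$, the map $\Phi$ is well defined. Define the candidate inverse
\[ \Psi:\Wp\oplus\Vp\to\Wd\oplus\Vp,\qquad \Psi(w,v)=(\Pd F(w+v),v). \]
A direct check shows $\Phi\circ\Psi=\id$ and $\Psi\circ\Phi=\id$, using only the identity $F_v((F_v)^{-1}(z))=z$ and the definition $F_v(w)=\Pd F(w+v)$. Since $F$ is $\Cone$ and $\Pd$ is a bounded linear projection, $\Psi$ is manifestly $\Cone$.

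Next I would show that the derivative of $\Psi$ is a topological isomorphism at every point. At $(w,v)$, splitting increments as $(\delta w,\delta v)\in\Wp\oplus\Vp$, the chain rule gives
\[ D\Psi(w,v)[\delta w,\delta v]=\bigl(\Pd DF(w+v)[\delta w+\delta v],\ \delta v\bigr). \]
Restricted to $\Wp\times\{0\}$, the first component is precisely $DF_v(w)[\delta w]$, which is an isomorphism $\Wp\to\Wd$ because $F_v$ is a diffeomorphism. So $D\Psi(w,v)$ is lower block-triangular with invertible diagonal blocks ($DF_v(w)$ and $\id_{\Vp}$), hence an isomorphism of Banach spaces. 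By the Banach inverse function theorem, $\Psi$ is a local $\Cone$ diffeomorphism at every point; combined with the global bijection $\Psi^{-1}=\Phi$ already exhibited, this upgrades $\Phi$ to a (global) $\Cone$ diffeomorphism.

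For the form of $\tilde F$, apply $\Pd$ and $\Qd$ separately to $\tilde F(z,v)=F((F_v)^{-1}(z)+v)$. The horizontal component is
\[ \Pd\tilde F(z,v)=\Pd F\bigl((F_v)^{-1}(z)+v\bigr)=F_v\bigl((F_v)^{-1}(z)\bigr)=z, \]
so the first slot is the identity in $z$. Set $\phi(z,v)\df\Qd F((F_v)^{-1}(z)+v)$; this is $\Cone$ as a composition of $\Cone$ maps (namely $F$, $\Phi$, and the bounded projection $\Qd$), yielding $\tilde F(z,v)=(z,\phi(z,v))$ as claimed.

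The only step that is not purely formal is the invertibility of $D\Psi$, and even that reduces to the hypothesis that each $F_v$ is a diffeomorphism; the block-triangular structure handles the coupling between the horizontal and vertical directions cleanly, so I do not expect any genuine obstacle.
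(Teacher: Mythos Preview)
Your proof is correct and follows essentially the same route as the paper: both introduce the explicit inverse $\Psi(w,v)=(\Pd F(w+v),v)$ (called $\xi$ in the paper) and read off the form of $\tilde F$ from the resulting commutative diagram. Your argument is considerably more detailed where the paper simply asserts ``Clearly, $\Phi$ and $\xi=\Phi^{-1}$ are $\Cone$ diffeomorphisms''; in particular, your use of the block-triangular structure of $D\Psi$ (upper-triangular in the ordering you chose, incidentally, not lower) together with the inverse function theorem is exactly the justification the paper leaves implicit.
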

\begin{proof} As usual in Lyapunov-Schmidt arguments, consider the tentative inversion of an arbitrary point $(\wwd, \vvd) \in \Wd \oplus \Vd$,
\[ F(w+v) =  (F_v(w), \ \Qd F(w,v)) = (\wwd, \vvd).\]
By hypothesis, $w = (F_{v})^{-1}(\wwd)$. Clearly,
$\Phi= ((F_{v})^{-1}, id): \Wd \oplus \Vp \to \Wp \oplus \Vp$ and $\xi = \Phi^{-1}$
are  $\Cone$ diffeomorphisms. The rest follows from the
diagram below.
\[
\begin{array}{ccc}\scriptstyle
    (w,\ v )&\stackrel{{\scriptstyle F}}{\longmapsto}& \scriptstyle (\wwd , \ \vvd ) \\
    {\scriptstyle \xi}\searrow\nwarrow{\scriptstyle \Phi}&&\nearrow{\scriptstyle \tilde{F}=F \,\circ\, \xi^{-1}}\\
&    \scriptstyle  (\wwd ,\ v  )&
  \end{array}
\] \qed
\end{proof}

A \emph{fiber} $\alpha$ is the preimage of a vertical affine subspace. We denote by $\alpha_g$ the fiber which is the
preimage of the affine subspace $\Vd+g$. The  \emph{height} of a point $x \in X$ (resp. $y \in Y$) is the vector $\Qp x$  (resp. $\Qd y$).

\begin{prop}\label{prop:fibermanifold} Let $F: X \to Y$ be flat. Then each fiber $\alpha_g$ is a $\Cone$ surface of dimension $\dim \Vp$, which intersects each horizontal affine subspace at a unique point $x$ transversally, i.e., $X = T_x \alpha_g \oplus \Wp$. The height map $x \mapsto \Qp x$
is a diffeomorphism between the fiber $\alpha_g$ and the vertical subspace $\Vp$, with inverse  $\mathcal{H}_g: \Vp \to \alpha_g$
given by $\mathcal{H}_g(v)=v+F_v^{-1}\Pd g$.
\end{prop}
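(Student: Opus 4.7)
The plan is to show that $\mathcal{H}_g$ is a $C^1$ embedding of $\Vp$ onto $\alpha_g$ whose inverse is the restriction of $\Qp$; transversality then falls out of the computation of $D\mathcal{H}_g$.

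First I would identify the fiber pointwise. By definition, $\alpha_g = F^{-1}(g + \Vd) = \{x \in X : \Pd F(x) = \Pd g\}$. Writing $x = w + v$ with $w \in \Wp$ and $v \in \Vp$, the defining condition reads $F_v(w) = \Pd g$. Flatness of $F$ means $F_v$ is a diffeomorphism, so this equation has the unique solution $w = F_v^{-1}(\Pd g)$. Hence for each $v \in \Vp$ there is exactly one point of $\alpha_g$ in the horizontal affine subspace $v + \Wp$, namely $\mathcal{H}_g(v) := F_v^{-1}(\Pd g) + v$, and conversely every $x \in \alpha_g$ has this form for $v = \Qp x$. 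This step simultaneously delivers the set-theoretic identity $\alpha_g = \mathcal{H}_g(\Vp)$, the uniqueness of the intersection with each horizontal affine subspace, and the relation $\Qp \circ \mathcal{H}_g = \id_{\Vp}$.

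Next I would upgrade these set-theoretic statements to the differentiable category using Proposition \ref{prop:globaldif}. That proposition gives that $\Phi(z,v) = (F_v^{-1}(z), v)$ is a $C^1$ diffeomorphism, so in particular $v \mapsto F_v^{-1}(\Pd g)$ is $C^1$ from $\Vp$ to $\Wp$. Thus $\mathcal{H}_g$ is $C^1$ and its derivative has the form
\[ D\mathcal{H}_g(v)\cdot \delta v \;=\; D_v\bigl[F_v^{-1}(\Pd g)\bigr]\cdot \delta v \;+\; \delta v, \]
whose first summand lies in $\Wp$ and whose second summand is $\delta v \in \Vp$. Differentiating $\Qp \circ \mathcal{H}_g = \id_{\Vp}$ yields $\Qp \circ D\mathcal{H}_g(v) = \id_{\Vp}$, so $D\mathcal{H}_g(v)$ is injective with closed, topologically complemented image. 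Since its global inverse on $\alpha_g$ is the continuous map $\Qp|_{\alpha_g}$, $\mathcal{H}_g$ is a $C^1$ embedding, $\alpha_g$ is a $C^1$ submanifold of dimension $\dim \Vp$, and the height map $\Qp|_{\alpha_g}$ is a diffeomorphism onto $\Vp$ with inverse $\mathcal{H}_g$.

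Finally I would read off transversality from the shape of $D\mathcal{H}_g$. At $x = \mathcal{H}_g(v)$, $T_x \alpha_g = D\mathcal{H}_g(v)\Vp$. Given any $\delta x \in X$, set $\delta v = \Qp \delta x$ and $t = D\mathcal{H}_g(v)\cdot \delta v \in T_x \alpha_g$; then $\Qp(\delta x - t) = \delta v - \delta v = 0$, so $\delta x - t \in \Wp$, proving $X = T_x\alpha_g + \Wp$. The sum is direct: any vector in $T_x\alpha_g \cap \Wp$ has vanishing $\Vp$-component, hence equals $D\mathcal{H}_g(v)\cdot 0 = 0$. The one real obstacle is the regularity of $v \mapsto F_v^{-1}(\Pd g)$, but this is precisely what Proposition \ref{prop:globaldif} supplies, so the remainder is essentially the standard graph picture applied to the decomposition $X = \Wp \oplus \Vp$.
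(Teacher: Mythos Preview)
Your proof is correct and is essentially the same approach as the paper's: both rely on Proposition~\ref{prop:globaldif}, observing that $\mathcal{H}_g(v) = \Phi(\Pd g, v)$ is the restriction of the global diffeomorphism $\Phi$ to the vertical affine subspace $\{\Pd g\} \times \Vp \subset \tilde X$, so that fibers inherit their manifold structure, transversality, and the height parametrization from the trivial picture in $\tilde X$. The paper compresses this into two sentences, whereas you spell out the derivative computation and the transversality decomposition explicitly, but the underlying idea is identical.
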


According to the proposition, $\Wp$ parametrizes (bijectively) the set of fibers, and $\Vp$ each fiber. Horizontal affine subspaces are sent injectively by $F$ to their images but fibers are not necessarily taken injectively (nor surjectively!) to vertical subspaces. In particular, the given hypotheses are not enough to imply the properness of the map $F: X \to Y$.

\begin{proof} We use the notation from the previous proposition. The change of variables $ \Phi(z,v) = ((F_v)^{-1}(z),v)$ is a diffeomorphism from each vertical affine subspace in $\tilde{X}$ to a fiber of $F$ with the property that heights are preserved. Each statement about fibers follows easily from its counterpart for vertical affine subspaces in $\tilde{X}$. \qed
\end{proof}

  %For a fixed $\wwd\in\Wd,$ consider the fiber $\alpha=F^{-1}(\wwd+\Vd)$ and let $\Phi$ and $\tilde F$ be as in Proposition \ref{prop:globaldif}. Define now an induced fiber $\tilde\alpha\df\tilde F^{-1}(\wwd+\Vd)=\wwd+\Vp$. The fiber $\tilde\alpha$ is an affine space, thus a smooth manifold of dimension $n$.
%  %If $\{\beta_i\}_{i=1}^{n}$ is a basis of \Vp,
%  One parametrization is
%  $\tilde\alpha(v)=\wwd+v$.
%  Since $\alpha=\Phi^{-1}(\tilde\alpha)$, the original fiber is also a differentiable manifold as smooth as the change of coordinates $\Phi$, and can be parametrized again by $v\in\Vp$:
%  $\alpha(v)=F_v^{-1}(\wwd)+v$.
%  The tangent spaces of the fibers are mapped one into another by $\Phi'=\xi'^{-1}$. Using the block-matrix expression for $\xi'$ it is easily seen that, for any $v\in\Vp$, $\Qp \alpha'(v_0)v=v$, ensuring that the tangent space always has a nonzero $V$ component.

\begin{prop} Let $F: X \to Y$ be flat. Then sheets are manifolds which
intersect vertical affine subspaces transversally.
%The Jacobian $DF(x): X \to Y$ is a Fredholm operator of index zero at $x \in X$.
If $x_c$ is a critical point of $F$ contained in the fiber $\alpha$,
then $\Kernel(DF(x_c)) \subset T_{x_c}\alpha$.
\end{prop}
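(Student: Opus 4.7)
The plan is to use flatness in two guises: it guarantees that $\Pd \circ DF(x)|_{\Wp}: \Wp \to \Wd$ is a linear isomorphism at every $x \in X$, and, via Proposition \ref{prop:fibermanifold}, it identifies each fiber with $\Vp$ and gives the splitting $X = T_x\alpha \oplus \Wp$. Both claims of the proposition will fall out of these two ingredients.

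To show that a sheet $S = F(v + \Wp)$ is a $\Cone$ submanifold, I would argue that $F|_{v+\Wp}$ is a $\Cone$ embedding. Its derivative at $v + w_0$ equals $DF(v+w_0)|_{\Wp}$, and composing with $\Pd$ gives $DF_v(w_0)$, which by flatness is invertible from $\Wp$ to $\Wd$. Consequently $DF(v+w_0)|_{\Wp}$ has a bounded left inverse $(DF_v(w_0))^{-1} \circ \Pd$, so the map is an injective immersion; since $F_v$ itself is a diffeomorphism, $F|_{v+\Wp}$ is globally injective, and $S$ is an embedded $\Cone$ submanifold of $Y$.

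Transversality of $S$ with a vertical affine subspace $y + \Vd$ at $y = F(v+w_0)$ follows from the same invertibility. The tangent space $T_y S = DF(v+w_0)(\Wp)$ is mapped onto $\Wd$ by $\Pd$, so $T_y S + \Vd$ contains $\Wd + \Vd = Y$, giving $T_y S + \Vd = Y$.

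For the critical-point statement, let $x_c \in \alpha = \alpha_g$ and $\xi \in \Kernel(DF(x_c))$. Using the splitting $X = T_{x_c}\alpha \oplus \Wp$ from Proposition \ref{prop:fibermanifold}, I would write $\xi = \xi_\alpha + w$ with $\xi_\alpha \in T_{x_c}\alpha$ and $w \in \Wp$. Since $F$ maps $\alpha$ into the affine subspace $g + \Vd$, differentiating a curve in $\alpha$ through $x_c$ gives $DF(x_c)(\xi_\alpha) \in \Vd$. Combined with $DF(x_c)(\xi) = 0$, this yields $DF(x_c)(w) \in \Vd$, i.e.\ $\Pd\, DF(x_c)(w) = 0$. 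But $\Pd \circ DF(x_c)|_{\Wp} = DF_{\Qp x_c}(\Pp x_c)$ is invertible by flatness, forcing $w = 0$ and hence $\xi = \xi_\alpha \in T_{x_c}\alpha$. I do not anticipate any substantial obstacle: once the isomorphism $\Pd \circ DF(x)|_{\Wp}$ is recognized as the differential of the diffeomorphism $F_v$, the argument is essentially bookkeeping, and the only point worth pausing on is the immediate remark that $F(\alpha) \subset g + \Vd$ forces tangent vectors of $\alpha$ into $\Kernel(\Qd DF(x_c))^c$... more precisely, into the preimage of $\Vd$ under $DF(x_c)$.
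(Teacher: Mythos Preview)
Your proposal is correct and follows essentially the same line as the paper: both arguments hinge on the fact that $\Pd\circ DF(x)|_{\Wp}=DF_v(w)$ is an isomorphism, and both use the splitting $X=\Wp\oplus T_{x_c}\alpha$ from Proposition~\ref{prop:fibermanifold} to kill the $\Wp$-component of a kernel vector. The only cosmetic differences are that the paper invokes the global change of variables $\Phi$ from Proposition~\ref{prop:globaldif} to obtain the manifold structure of sheets (you argue directly that $F|_{v+\Wp}$ is an embedding), and that the paper proves the slightly sharper statement $Y=DF(x)\Wp\oplus\Vd$ by first showing $DF(x)\Wp\cap\Vd=\{0\}$, whereas you only write $T_yS+\Vd=Y$; the trivial intersection, however, is immediate from your left-inverse $(DF_v(w_0))^{-1}\circ\Pd$, so nothing is missing. (Your closing parenthetical should read $\Kernel(\Pd\,DF(x_c))$, not $\Kernel(\Qd\,DF(x_c))^c$, as you yourself note.)
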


Transversal intersection at $F(x) \in Y$ means $Y =DF(x)\Wp \oplus \Vd$.

\begin{proof}  The change of variables $ \Phi$ is a diffeomorphism between horizontal affine subspaces, so sheets of $F$ are also the images of horizontal affine subspaces $W_Y$ under $\tilde{F}$, and hence are manifolds of codimension $\dim \Vd$. Clearly, the tangent space of a sheet at a point $F(x)$ consists of the closed vector space $DF(x)\Wp$.
To see that $DF(x)\Wp \cap \Vd = \{0\}$, suppose $DF(w + v) \tilde{w} = \tilde{v}$, for  $ x= w+v$ and $w, \tilde{w} \in \Wp, v \in \Vp, \tilde{v} \in \Vd$. Then $ \Pd DF(w + v) \tilde{w} = 0$ and since $\Pd DF(w + v) = DF_v(w)$, we have that $DF_v(w)\tilde{w} = 0$.
Now, $F_v$ is a diffeomorphism  between $v + \Wp$ and $\Wd$ and thus $\tilde{w}=0$.
Counting dimensions, we conclude that $Y =DF(x)\Wp \oplus \Vd$.

At a critical point $x_c \in \alpha$, $\Pd \circ DF(x_c): \Wp \to \Wd$ is an isomorphism by flatness,
and thus $DF(x_c): \Wp \to DF(x_c) \Wp$ also is.
Split $X = \Wp \oplus T_{x_c} \alpha$ as in Proposition \ref{prop:fibermanifold}.
An element of the kernel of $DF(x_c): \Wp \oplus T_{x_c} \alpha \to DF(x_c) \Wp \oplus \Vd$ must have null
cooordinate in $\Wp$, so that $\Kernel(DF(x_c)) \subset T_{x_c}\alpha$. \qed
\end{proof}

Thus, the projection $\Pd$ is a diffeomorphism between each sheet and $\Wd$. Fibers are disjoint, but sheets are not --- this is why some points have more than one preimage under $F$.

\section{Smoothness and Flatness}\label{sec:estimates}

Let $\Omega$ denote an open, connected, bounded set of \Rn\  with Lipschitz boundary \dOmega.
We use standard notation for Sobolev spaces $\Wk[\Omega]{k,p},\ \Wzerok[\Omega]{k,p}$; i.e., the $j$-seminorm of a function $u\in\Wk[\Omega]{k,p}$ is given by $\semnrm[u]{j,p}^{p}=\sum_{|\alpha|=j}\nrm[D^{\alpha}u]{p}^{p}$, and its $j$-norm by $\nrm[u]{j,p}^{p}=\sum_{k\le j}\semnrm[u]{k,p}^{p}$. Of special interest are the spaces with $p=2$ and $j=0,\ 1$ or 2, which we denote by $H^{0}(\Omega)=\Ltwo[\Omega],\ \Hone[\Omega]$ and $\Htwo[\Omega]$. In the case of Dirichlet boundary conditions we work mainly with the space $\Hzeroone[\Omega]$ or with $\Hzerotwo[\Omega]$. Using Poincaré's inequality we see that seminorms of the $H$ spaces are indeed norms, equivalent to the full norms, and that they are Hilbert spaces, with inner products given by
\[
\iprod[0]{u}{v}=\int_{\Omega}uv,\quad
\iprod[1]{u}{v}=\int_{\Omega}\nabla u\cdot \nabla v,\quad
\iprod[2]{u}{v}=\int_{\Omega}\lap u\cdot \lap v.
\]
We identify $\Hzeroone[\Omega]\simeq\Hnegone[\Omega]$ via
$\iprod{\tilde{u}}{\cdot}=\oneprod{u}{\cdot},$ where the tilde denotes the functional induced by an element of \Hzeroone[\Omega] and the brackets \iprod{}{}\ with no subscript
 denote the coupling between a space and its dual. Thus
\[ \negprod{\tilde{u}}{\tilde{v}}=\oneprod{u}{v},\quad
\nrm[\tilde{u}]{-1}=\semnrm[{u}]{1},
\quad
\tilde{u}_n\stackrel{\Hnegone}{\to}\tilde{u}\Leftrightarrow{u_n}\stackrel{\Hzeroone}{\to}{u}
\quad\textrm{and } \quad u\opmapsto{-\lap}\oneprod{u}{\cdot}.
\]

Recall that
 a function $f:\Omega\times \R\to\R$ is a \emph{Carathéodory} function if $s\mapsto f(x,s)$ is continuous for almost every $x$ and  $x\mapsto f(x,s)$ is measurable for every $s$.
A map of the form $u\mapsto f(\cdot,u)$ is a \emph{Nemytskii} operator.
We work with \emph{appropriate functions} $f$, which satisfy
\[
f\in \Cone[{\Omega}\times\RR],\quad \zeronrm[f(\cdot,0)]<\infty\quad\textrm{and}\quad \inftynrm[\partial_2 f]<\infty.
 \]
Set $X = H^1_0(\Omega)$, $Y = H^{-1}(\Omega)$ and,
for an appropriate $f$, the nonlinear map
\[
F: X \to Y, \quad F(u)(x)=-\lap u(x)-f(x,u(x)).
\]
The (Dirichlet) Laplacian acts weakly
and $f(\cdot,u(\cdot))$ is the functional given by  \[z\mapsto\iprod[0]{f(\cdot,u)}{z}=\int{f(x,u(x))}{z(x)}\dx.\]

Write $F(u)=-\lap u - N_f(u)$, where $N_f(u)(x)= f(x,u(x))$ is the \emph{Nemytskii operator} associated with $f$.
The estimates below are a minor extension of the properties enumerated in \cite{ambprod:book}. Throughout the text, $C$ denotes a positive constant, which may change along the argument.

\begin{prop}\label{prop:nemcone}
Let $f$ be an appropriate function. Then $F:X\to Y$ is a \Cone\ map.
\end{prop}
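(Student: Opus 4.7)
My plan is to write $F(u) = -\lap u - N_f(u)$. Since $-\lap : X \to Y$ is linear and bounded (indeed, the Riesz isomorphism under the identification $X\simeq Y$ recalled above), it is of class $\Cinfty$, so the entire task reduces to showing that the Nemytskii operator $N_f : X \to Y$ is $\Cone$. I will factor $N_f$ through $\Ltwo[\Omega]$ via the continuous inclusions $X \hookrightarrow \Ltwo[\Omega] \hookrightarrow Y$ (the second dual to the first), first obtaining a Lipschitz bound in $\Ltwo[\Omega]$ and then transferring differentiability to $X\to Y$ using a Sobolev embedding with a strict gap in exponents.

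Writing $M:=\inftynrm[\partial_2 f]$, I first note that $|f(x,s)|\le |f(x,0)|+M|s|$, which yields $\zeronrm[N_f(u)-N_f(v)]\le M\zeronrm[u-v]$, so $N_f:\Ltwo[\Omega]\to\Ltwo[\Omega]$ is Lipschitz. My candidate derivative at $u\in X$ is the multiplication operator $T_u h := \partial_2 f(\cdot,u)\,h$, bounded $\Ltwo[\Omega]\to\Ltwo[\Omega]$ with operator norm at most $M$. To check Fréchet differentiability at $u$, I will use the pointwise mean value theorem to produce a measurable $\theta:\Omega\to[0,1]$ with
\[ f(x,u+h)-f(x,u)-\partial_2 f(x,u)h(x) = \bigl[\partial_2 f(x,u+\theta h)-\partial_2 f(x,u)\bigr]h(x) =: \eta(x)h(x), \quad |\eta|\le 2M, \]
and then fix $q$ with the Sobolev embedding $X\hookrightarrow \Lp[\Omega]{q}$ and $q>2$ (any finite $q$ when $n\le 2$; $q=2n/(n-2)$ when $n\ge 3$), together with $p$ defined by $1/p+1/q=1/2$. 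Hölder's inequality together with the embedding then gives
\[ \zeronrm[\eta h]^2 \le \nrm[\eta]{\Lp[\Omega]{p}}^2\,\nrm[h]{\Lp[\Omega]{q}}^2 \le C\,\nrm[\eta]{\Lp[\Omega]{p}}^2\,\onenrm[h]^2, \]
so it suffices to show $\nrm[\eta]{\Lp[\Omega]{p}}\to 0$ as $\onenrm[h]\to 0$. Since $\theta h \to 0$ in $\Lp[\Omega]{q}$ and $\partial_2 f$ is a bounded Carathéodory function on a bounded domain, the Nemytskii operator associated to $\partial_2 f$ is continuous from $\Lp[\Omega]{q}$ into $\Lp[\Omega]{p}$ (standard, via a.e.\ convergence along subsequences plus dominated convergence), which is the ingredient I need.

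Continuity of $u\mapsto T_u$ will follow from the same estimate with $\partial_2 f(\cdot,u_1)-\partial_2 f(\cdot,u_2)$ in place of $\eta$:
\[ \nrm[T_{u_1}-T_{u_2}]{\mathcal{L}(X,Y)} \le C\,\nrm[\partial_2 f(\cdot,u_1)-\partial_2 f(\cdot,u_2)]{\Lp[\Omega]{p}}, \]
and the right-hand side tends to $0$ as $u_2\to u_1$ in $X$ by the same Nemytskii continuity. The main subtlety to watch for is that $N_f$ is \emph{not} in general Fréchet differentiable as a map $\Ltwo[\Omega]\to\Ltwo[\Omega]$ (this fails unless $f$ is affine in its second argument), so the argument cannot be closed within $\Ltwo$ alone; the strict gap $q>2$ coming from the Sobolev embedding is what rescues the Hölder step above and is, in effect, the one nontrivial ingredient beyond pointwise calculus.
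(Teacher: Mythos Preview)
Your proposal is correct and follows essentially the same route as the paper: reduce to the Nemytskii operator, write the first-order remainder as a product of a bounded factor times $h$, invoke the Sobolev embedding into $L^s$ with $s>2$ together with H\"older's inequality (conjugate exponent $r$ with $\tfrac{1}{r}+\tfrac{1}{s}=\tfrac{1}{2}$), and then drive the bounded factor to zero in $L^r$ via a.e.\ subsequential convergence plus dominated convergence; the continuity of the derivative is handled by the identical estimate. The only cosmetic difference is that the paper writes the remainder in integral form, $\int_0^1[\partial_2 f(\cdot,u+\tau h)-\partial_2 f(\cdot,u)]\,d\tau$, rather than via a mean-value $\theta(x)$, which sidesteps any measurable-selection quibble about $\theta$ but is otherwise the same argument (and your parenthetical already indicates the underlying subsequence-plus-dominated-convergence mechanism).
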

\begin{proof}
It suffices to show that $N_f:\Hone[\Omega]\to\Ltwo[\Omega]$ is \Cone. Indeed, this implies that $N_f:X\to Y$ is also \Cone, since $X\subset\Hone[\Omega]$ and $\Ltwo[\Omega]\subset Y$.
Notice first that $N_f:\Hone[\Omega]\to\Ltwo[\Omega]$ is well defined. Indeed, by the Taylor formula with integral remainder in $x$ there exist $a, b>0$ with
\[
\zeronrm[f(\cdot,u)]\le\zeronrm[a]+b\zeronrm[u]\le C(1+\onenrm[u]).
\]
Write the superlinear remainder
\[
e(x,h(x)) = f(x,u(x)+h(x))-f(x,u(x))-\partial_2f(x,u(x))h(x)= \delta(x,h(x))\, h(x)
\]
where
\[
\delta(x,h(x))\df\int_0^1\partial_2f(x,u(x)+\tau\,h(x))-\partial_2f(x,u(x))\ \dx[\tau].
\]
To ensure differentiability, we need to show that $\zeronrm[e]\to 0$ as $\onenrm[h]\to 0$.
By the Sobolev imbedding theorems, $h\in\Hone[\Omega]$ is also in $\Lp[\Omega]{s}$ for some $s>2$ (if $n>2$ we can take any $2<s\le2^*=\frac{2n}{n-2}$, whereas for $n\le2$ any $s>2$ works).
By Hölder's inequality, $\zeronrm[e]\le\nrm[h]{0,s}\nrm[\delta]{0,r}$, where $r>2$ is given by $\frac{1}{r}+\frac{1}{s}=\frac{1}{2}$.
Again from the imbedding theorems, $\nrm[h]{0,s}\le C\onenrm[h]$, and we are left with showing that $\nrm[\delta]{0,r}\to0$ as $\onenrm[h]\to 0$.
Switching to a subsequence if necessary, $h\to0$ pointwise a.e. so that the integrand $|\delta|^{2r}$ also converges to zero pointwise a.e., by the continuity of $f$.
Hence, by the bounded convergence theorem, $\nrm[\delta]{0,r}\to0$. This holds for any sub-subsequence and thus for $\onenrm[h]\to 0$ in general. That $z\mapsto \partial_2f(\cdot,u)z$ is a bounded map follows from $\zeronrm[\partial_2f(\cdot,u)z]\le\inftynrm[\partial_2f]\zeronrm[z]\le C\inftynrm[\partial_2f]\onenrm[z]$, completing the proof of Fréchet-differentiability.

We now show continuity of the derivative:
\[
\textrm{For any }u\in\Hone[\Omega],\quad \norm[DN_f(u+h)-DN_f(u)]\to 0\quad\textrm{whenever}\quad\onenrm[h]\to 0.
\]
Suppose $v\in\Hone[\Omega]$, set $g(x,h(x))=\partial_2f(x,u(x)+h(x))-\partial_2f(x,u(x))$ and notice that,
for $r, s$ defined above,
\begin{equation*}\label{eq:nemcone}
\nrm[g(\cdot,h)v]{0}\le\nrm[g(\cdot,h)]{0,r}\nrm[v]{0,s}\le C\nrm[g(\cdot,h)]{0,r}\nrm[v]{1}.
\end{equation*}
The constant $C$ does not depend on the function $v$ and, since $f$ is \Cone, the previous argument with the bounded convergence theorem holds, yielding
\begin{equation*}
\norm[N_f(u+h)-N_f(u)]\rightarrow0,\quad\textrm{as }\onenrm[h]\to0.
\end{equation*}
\qed
\end{proof}

%\subsection{Flatness of $F$}\label{subsec:flatness}
In Theorem~\ref{theo:Fvdiffeo}, we exhibit direct sums $X = \Wp \oplus \Vp$ and  $Y = \Wd \oplus \Vd$ for which $F: X \to Y$ is flat. We follow closely some arguments in \cite{costa:1981}.

Denote the (possibly repeated) eigenvalues of $-\lap_D:H^2_0(\Omega) \subset H^0(\Omega) \to H^0(\Omega)$ by
$ 0 < \lambda_1 \le \lambda_2 \le \ldots$
and choose corresponding orthogonal eigenfunctions $\varphi_k, k=1,2,\ldots$.
Orthogonality holds
for the four spaces $H^2_0(\Omega)$, $H^1_0(\Omega)$, $H^0(\Omega)$
and $H^{-1}(\Omega)$.
Now let $I = [a,b]$ an interval containing $\overline{\partial_2f(\Omega,\RR)}$.
The \emph{$I$-index set} is $\calK\ = \{ i \ | \ \lambda_i \in I \}$.
The  \emph{$I$-decompositions} of $X$ and $Y$ are defined as follows.
Set $\Vp = \Vd$  be the subspace spanned by $\{\varphi_i,\, i \in \calK\}$.   Also, set $\Wp = \Vp^\perp$ and $\Wd = \Vd^\perp$. The four projections $\Pp, \Qp, \Pd$ and $\Qd$ are now orthogonal. As in the previous section,
for $v\in\Vp,$ the projected restriction $F_v:\Wp\to\Wd$
acts between horizontal subspaces.

\begin{prop} \label{prop:bound}
Let $f$ be an appropriate function, $I = [a,b] \supset \overline{\partial_2f(\Omega,\RR)}$ and
$I$-decompositions
$X = \Wp \oplus \Vp$ and $Y= \Wd \oplus \Vd$.
Then the derivatives $DF_v$ of the associated restricted projections of $F: \Wp \oplus \Vp \to \Wd \oplus \Vd$ are uniformly bounded from below.
More precisely, there exists $C > 0$ such that
\begin{equation*}
  \forall v \in \Vp \ \forall w\in \Wp \ \forall h \in \Wp,\quad\nrm[D F_v(w)h]{-1}\ge C\nrm[h]{1}.
  \end{equation*}
Also, all derivatives $DF_v$ are invertible.
\end{prop}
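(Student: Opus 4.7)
My plan is to work through the Riesz identification $H^{-1} \simeq H^1_0$ via $-\lap$. Under this identification, the orthogonal projections $\Pd, \Qd$ on $Y$ correspond to $\Pp, \Qp$ on $X$, and the $H^{-1}$ norm satisfies $\|\Pd g\|_{-1} = \sup\{\,|\langle g, z \rangle| : z \in \Wp,\ |z|_1 \leq 1\,\}$. Writing $q(x) = \partial_2 f(x, w(x)+v(x))$ so that $DF(w+v)h = -\lap h - qh$, the action of $DF_v(w)h$ on a test function $z \in \Wp$ is given by the bilinear form $B(h,z) = \iprod[1]{h}{z} - \int_\Omega q h z$. So the problem reduces to producing, for each $h \in \Wp$, a test function $z \in \Wp$ with $|z|_1 \lesssim |h|_1$ and $B(h,z) \gtrsim |h|_1^2$.

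Since $I = [a,b] \supset \overline{\partial_2 f(\Omega, \RR)}$ and $\calK = \{i : \lambda_i \in I\}$, I would split $\Wp = \Wp^- \oplus \Wp^+$ according to whether $\lambda_i < a$ or $\lambda_i > b$. For $h = h^- + h^+$, the key test function is $z = h^+ - h^-$, which lies in $\Wp$. Because the eigenfunctions are mutually orthogonal in both $\Ltwo$ and $\Hone$, the cross terms $\iprod[1]{h^+}{h^-}$ and $\int q h^+ h^-$ need not vanish individually, but the way $z$ is arranged makes $B(h,z)$ split cleanly into
\[ B(h,z) = \bigl(|h^+|_1^2 - \textstyle\int q (h^+)^2\bigr) - \bigl(|h^-|_1^2 - \textstyle\int q (h^-)^2\bigr), \]
since the gradient cross terms cancel and the multiplicative cross terms also cancel. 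Bounding $q \leq b$ on the $h^+$ piece and $q \geq a$ on the $h^-$ piece, and then using the spectral gap estimate $\lambda_i \mapsto (\lambda_i - b)/\lambda_i$ (increasing for $\lambda_i > b$) and analogously for $\lambda_i < a$, yields
\[ B(h,z) \geq \tfrac{\lambda^+ - b}{\lambda^+} |h^+|_1^2 + \tfrac{a - \lambda^-}{\lambda^-} |h^-|_1^2 \geq \gamma\, |h|_1^2, \]
where $\lambda^\pm$ are the nearest eigenvalues outside $[a,b]$ and $\gamma > 0$. Since $|z|_1 = |h|_1$ by Hilbert orthogonality of $h^\pm$ in $\Hone$, the desired coercive lower bound follows with $C = \gamma$ (with obvious conventions if either $\Wp^\pm$ is trivial).

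For invertibility of $DF_v(w) : \Wp \to \Wd$, I would write $DF_v(w) = \Pd(-\lap)|_{\Wp} - \Pd \circ M_q \circ \iota$, where $\iota: \Wp \hookrightarrow \Ltwo$ is the Rellich-compact inclusion and $M_q$ is bounded multiplication. The first term is an isometric isomorphism $\Wp \to \Wd$ by construction of the decompositions, while the second is compact as a composition. Thus $DF_v(w)$ is a compact perturbation of an isomorphism, hence Fredholm of index zero, and the coercive bound just proved ensures injectivity; the Fredholm alternative then delivers surjectivity.

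The main obstacle is recognizing that a naive choice $z = h$ only gives indefinite information about $B(h,h)$ when the spectrum straddles $I$, and that the sign-flipping test $z = h^+ - h^-$ — together with mutual orthogonality in both $\Ltwo$ and $\Hone$ — is what separates the two sides of the spectrum and lets the uniform estimate $q \in [a,b]$ pay off on each summand. The rest is bookkeeping on eigenvalues and a standard Fredholm argument.
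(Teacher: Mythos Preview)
Your argument is correct and shares its core with the paper's proof: both hinge on the sign-flipping test function $z = h^+ - h^-$ built from the spectral splitting of $\Wp$ into eigenspaces below $a$ and above $b$, and both close with the same Fredholm-index-zero argument (compact perturbation of an isomorphism) to upgrade the coercive bound to invertibility.

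The packaging differs in one useful way. The paper centers the potential at $\gamma = (a+b)/2$, writes $DF_v(w)h = Ah - Bh$ with $Ah = \Pd(-\lap h - \gamma h)$ and $Bh = \Pd(\partial_2 f(\cdot,u) - \gamma)h$, then applies the triangle inequality $\|Ah\|_{-1} - \|Bh\|_{-1}$; the test $h^+ - h^-$ is used only for the lower bound on $\|Ah\|_{-1}$, while $\|Bh\|_{-1}$ is bounded separately via Cauchy--Schwarz. You instead evaluate the full bilinear form $B(h,z)$ directly against $z = h^+ - h^-$, exploiting the algebraic identity $(h^+ + h^-)(h^+ - h^-) = (h^+)^2 - (h^-)^2$ so that the weighted cross term $\int q\, h^+ h^-$ (which indeed need not vanish) never appears. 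This sidesteps both the centering and the triangle-inequality loss, giving the same constant $\min\bigl(\tfrac{\lambda^+ - b}{\lambda^+}, \tfrac{a - \lambda^-}{\lambda^-}\bigr)$ by a shorter route. One small clarification: your sentence about orthogonality and cross terms is slightly garbled --- $\iprod[1]{h^+}{h^-}$ \emph{does} vanish by $H^1$-orthogonality; it is only $\int q\, h^+ h^-$ that need not, and it is the algebraic cancellation (not orthogonality) that disposes of it.
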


When $I= [a,b]$ contains only the first eigenvalue $\lambda_1$, i.e., $\calK = \{1\}$, this estimate  has been
extensively used (\cite{ambrosetti:1972}, \cite{berger:1974}). It is also used in \cite{costa:1981} in the case when $I$ contains the first $k$ eigenvalues of $-\lap_D$.  The nonautonomous case
has been considered in  \cite{hess:1980} and \cite{smiley:1996}. The result below is slightly more general.

\begin{proof}
From Proposition \ref{prop:nemcone}, each restricted projection $F_v: \Wp \to \Wd$ is $\Cone$ with derivative
$DF_v(w):\Wp \to \Wd$ given by $DF_v(w)h(x) = -\lap h(x) - \Pd \partial_2f(x,u(x))h(x)$, where $u=w+v$.
Take $h\in \Wp$ of unit norm
%Let$\hh$ be the functional $\iprod{\hh}{\cdot}=\zeroprod{h}{\cdot}$
 and let $\gamma=(a+b)/2$.
  Adding and subtracting $\gamma h$,
  \begin{align}
    \negnrm[DF_v(w)h]&=\negnrm[\Pd(-\lap h-\gamma h)-\Pd(\partial_2f(\cdot,u)h-\gamma h)]\nonumber\\
    &\ge\negnrm[\Pd(-\lap h-\gamma h)]-\negnrm[\Pd(\partial_2f(\cdot,u)h-\gamma h)]\nonumber\\
    &\ge\negnrm[A h ]-\negnrm[B h]\label{eq:Fvest}%
    .
  \end{align}

   We bound \negnrm[B h] from above. For $z \in X$  and $w \in \Wp$,
  \begin{align*}
     \negnrm[B h]&=\stacksub{\sup\ }{\norm[z]=1}\iprod{\Pd(\partial_2f(\cdot,u) h-\gamma h)}{z}=\stacksub{\sup\ }{\norm[w]=1}\iprod{\partial_2f(\cdot,u) h-\gamma h}{w}\\
     &=\stacksub{\sup\ }{\norm[w]=1}\zeroprod{(\partial_2f(\cdot,u) -\gamma)h }{w}%\\
     %&\le&
     \le\inftynrm[\partial_2f(\cdot,u) -\gamma]\,\,\stacksub{\sup\ }{\norm[w]=1}\zeroprod{|h|}{|w|}.
  \end{align*}
  By Cauchy-Schwartz, the supremum is realized when $|w|$ is a scalar multiple of $|h|$, which is the case when $w=\rho h$, $\rho \in \RR$.
  Since $w$ and $h$ are unit vectors in $X$, we may take $\rho =1$ and, defining $c=\inftynrm[\partial_2f(\cdot,u) -\gamma]$,
  \begin{equation}\label{eq:Best}
     \negnrm[B h]\le c\,\zeroprod{|h|}{|h|}=c\,\zeronrm[h]^2=
     \sum_{k\not\in {\calK}}c\, h_k^2\zeronrm[\varphi_k]^2=\sum_{k\not\in {\calK}}({c}/{\lambda_k})h_k^2\onenrm[\varphi_k]^2,
  \end{equation}
  where $h_k\in\RR$ are the coefficients of the expansion of $h$ in eigenfunctions and $\calK\ = \{ \ell < \ldots < r\}$ is the $I$-index set.
  To estimate \negnrm[A h] from below, start with
  \begin{align*}
     \negnrm[A h]&=\stacksub{\sup\ }{\norm[z]=1}\iprod{\Pd(-\lap h-\gamma h)}{z}=\stacksub{\sup\ }{\norm[w]=1}\iprod{-\lap h-\gamma h}{w}\\
     &=\stacksub{\sup\ }{\norm[w]=1}\left(\oneprod{h}{w}-\gamma \zeroprod{h}{w}\right).
  \end{align*}
  Split $\Wp=W_-\oplus W_+$, where
  \[ W_-=\{u\st u=\sum_{k < l}u_k\varphi_k\},\quad W_+=\{u\st u=\sum_{k > r}u_k\varphi_k\}\]
  are orthogonal subspaces in the $H^1$ and $H^0$ norms. Split $h = h_- + h_+$ and set $w = h_+-h_-\in X$, clearly a unit vector:
  \begin{align*}
     \negnrm[A h]&\ge\oneprod{h}{h_+-h_-}-\gamma \zeroprod{h}{h_+-h_-}
     =(\onesemnrm[h_+]^2-\gamma\zeronrm[h_+]^2)+(\gamma\zeronrm[h_-]^2-\onesemnrm[h_-]^2)\\
     &=\sum_{k> r}h_k^2(\onesemnrm[\varphi_k]^2-\gamma\zeronrm[\varphi_k]^2)+%
     \sum_{k< l}h_k^2(\gamma\zeronrm[\varphi_k]^2-\onesemnrm[\varphi_k]^2)\\
     &=\sum_{k> r}(1-{\gamma}/{\lambda_k})h_k^2\onesemnrm[\varphi_k]^2+%
     \sum_{k< l}({\gamma}/{\lambda_k}-1)h_k^2\onesemnrm[\varphi_k]^2.
  \end{align*}
  Notice that $(1-{\gamma}/{\lambda_k})$ is positive (resp. negative) for $k>r$ (resp. $k<\ell$).
  Then
  \begin{equation}\label{eq:Aest}
    \negnrm[A h]\ge\sum_{k\not\in {\calK}}|1-{\gamma}/{\lambda_k}|\,h_k^2\onesemnrm[\varphi_k]^2=\sum_{k\not\in {\calK}}(C_k/\lambda_k)h_k^2\onesemnrm[\varphi_k]^2,
  \end{equation}
   where $C_k=|\lambda_k-\gamma|$.
  Combining equations \eqref{eq:Fvest}, \eqref{eq:Best} and \eqref{eq:Aest},
  \begin{align*}
    \negnrm[DF_v(w)h]&\ge\sum_{k\not\in {\calK}}(C_k-c)/\lambda_k\,h_k^2\onesemnrm[\varphi_k]^2
    \ge\left(\stacksub{\inf\ }{k\not\in {\calK}}(C_k-c)/\lambda_k\right)\sum_{k\not\in {\calK}}h_k^2\onesemnrm[\varphi_k]^2\\
    &=\left(\stacksub{\inf\ }{k\not\in {\calK}}(C_k-c)/\lambda_k\right)\onesemnrm[h]^2
    =C\onesemnrm[h]^2=C.           %&=&\min\{1-b/\lambda_{r+1},a/\lambda_{l-1}-1\}\onesemnrm[h]^2
  \end{align*}
  The infimum above is achieved at one of the outer eigenvalues closest to $[a,b]$,
  proving the injectivity of $DF_v(w)$. We now show that $DF_v(w)$
  is a Fredholm operator of index zero, and hence surjective.

  Indeed, $-\lap: X \to Y$ is an isomorphism and $DF(u):X \to Y$ given by $DF(u)z = -\lap z - \partial_2f(\cdot,u)z$
  is obtained by adding a compact operator, from Proposition \ref{prop:nemcone}. Now, $ DF_v(w) = \Pd \circ DF(v+w) \circ \iota$,
  where the projection $\Pd: Y \to \Wd$ and the inclusion $\iota: \Wp \to X$ are Fredholm operators, whose indices
  add to zero. Thus $ DF_v(w)$ is also Fredholm of index zero. \qed
  \end{proof}

Recall Hadamard's global inversion theorem (\cite{berger:book}).

\begin{lem}\label{lem:hadamard}
  Let $\Phi:X \to Y$ be a \Cone\ map between Banach spaces $X$ and $Y$ such that $D\Phi(u)$ is invertible for each $u \in X$.
   Suppose there exists $C >0$ such that
  \begin{equation*}
  \forall u,h \in X\quad\nrm[D\Phi(u)h]{}\ge C\nrm[h]{}.
  \end{equation*}
  Then $\Phi$ is a global \Cone-diffeomorphism.
\end{lem}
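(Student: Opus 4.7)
The argument is the standard Hadamard--L\'evy global continuation scheme. First, the coercive bound $\|D\Phi(u)h\| \ge C\|h\|$ together with invertibility of each $D\Phi(u)$ yields the uniform estimate $\|D\Phi(u)^{-1}\| \le 1/C$, and the inverse function theorem makes $\Phi$ a local $C^1$-diffeomorphism at every $u \in X$.

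The workhorse is a \emph{path lifting} property: for every $C^1$ path $\sigma : [0,1] \to Y$ and every $x_0 \in \Phi^{-1}(\sigma(0))$, there is a unique $C^1$ lift $\tilde\sigma : [0,1] \to X$ with $\tilde\sigma(0) = x_0$ and $\Phi \circ \tilde\sigma = \sigma$. I would prove this by the usual continuation argument: set $T^{\ast} = \sup\{t \in [0,1] : \text{a lift exists on } [0,t]\}$, which is positive by the local IFT. On $[0, T^{\ast})$ the lift satisfies the ODE $\tilde\sigma'(t) = D\Phi(\tilde\sigma(t))^{-1}\sigma'(t)$, hence by the uniform bound $\|\tilde\sigma'(t)\| \le (1/C)\|\sigma'(t)\|$. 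Thus $\tilde\sigma$ is Lipschitz on $[0,T^{\ast})$, extends continuously to $T^{\ast}$ with $\Phi(\tilde\sigma(T^{\ast})) = \sigma(T^{\ast})$, and the IFT applied at $\tilde\sigma(T^{\ast})$ contradicts maximality unless $T^{\ast} = 1$. Uniqueness of the lift follows from local injectivity of $\Phi$: the set on which two lifts agree is both open (by the IFT) and closed (by continuity), hence all of $[0,1]$.

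Global surjectivity is then immediate: lift the segment $\sigma(t) = \Phi(x_0) + t(y - \Phi(x_0))$ from an arbitrarily chosen $x_0$, and $\tilde\sigma(1)$ is a preimage of $y$.

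The main obstacle is injectivity, which requires a homotopy-lifting argument (the one place where mere path lifting is insufficient). Given $x_0, x_1 \in \Phi^{-1}(y_0)$, set $\gamma(t) = x_0 + t(x_1 - x_0)$ and $\sigma = \Phi \circ \gamma$, a loop based at $y_0$. The convex homotopy $\sigma_s(t) = y_0 + s(\sigma(t) - y_0)$, $s \in [0,1]$, contracts $\sigma$ to the constant loop at $y_0$. Lifting each $\sigma_s$ from $x_0$ yields, by continuous dependence on parameters in the lifting ODE, a continuous family with endpoints $\tilde\sigma_s(1) \in \Phi^{-1}(y_0)$. At $s = 0$ the lift is constant at $x_0$, so the endpoint is $x_0$; at $s = 1$ uniqueness of lifts forces $\tilde\sigma_1 = \gamma$, so the endpoint is $x_1$. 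Since $\Phi$ is a local homeomorphism the fiber $\Phi^{-1}(y_0)$ is discrete, so the continuous map $s \mapsto \tilde\sigma_s(1)$ must be constant, giving $x_0 = x_1$. Smoothness of the global inverse then follows pointwise from the IFT.
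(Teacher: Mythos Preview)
Your proof is the classical Hadamard--L\'evy argument and is correct; in particular the homotopy-lifting step for injectivity is the right idea and is carried out properly. However, the paper does not prove this lemma at all: it is stated as a recall of a well-known result, with a citation to Berger's book (\cite{berger:book}), and is then simply invoked in the proof of Theorem~\ref{theo:Fvdiffeo}. So there is no ``paper's own proof'' to compare against --- you have supplied a complete proof where the authors chose to give only a reference.
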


\begin{theo}\label{theo:Fvdiffeo}
Let $f$ be an appropriate function, $I = [a,b] \supset \overline{\partial_2f(\Omega,\RR)}$. Then the map $F: X \to Y$ is flat with respect to the
$I$-decompositions
$X = \Wp \oplus \Vp$ and $Y= \Wd \oplus \Vd$.
\end{theo}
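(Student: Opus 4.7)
The plan is to show that for each fixed $v \in \Vp$, the projected restriction $F_v : \Wp \to \Wd$ satisfies the hypotheses of Hadamard's global inversion theorem (Lemma \ref{lem:hadamard}), whence it is a global $\Cone$-diffeomorphism. Since this holds for every $v \in \Vp$, flatness follows immediately from the definition.

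First I would observe that $F_v$ is $\Cone$: the map $w \mapsto w+v$ is an affine (continuous) translation from $\Wp$ into $X$, $F : X \to Y$ is $\Cone$ by Proposition \ref{prop:nemcone}, and $\Pd : Y \to \Wd$ is a bounded linear projection, so $F_v = \Pd \circ F \circ (\,\cdot + v)$ is $\Cone$ by the chain rule. Next, by Proposition \ref{prop:bound}, the derivative $DF_v(w) : \Wp \to \Wd$ is invertible for every $w \in \Wp$, and there is a constant $C > 0$, independent of $v$ and $w$, such that
\[
\nrm[DF_v(w)h]{-1} \ge C \, \nrm[h]{1} \qquad \text{for all } h \in \Wp.
\]
Here I am using that the norm on $\Wd$ is the restriction of the $H^{-1}$-norm and the norm on $\Wp$ is the restriction of the $H^1_0$-norm.

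With these two ingredients in hand, the hypotheses of Lemma \ref{lem:hadamard} are met for $\Phi = F_v$ acting between the Banach spaces $\Wp$ and $\Wd$ (both closed subspaces of Hilbert spaces, hence Banach). We conclude that $F_v$ is a global $\Cone$-diffeomorphism from $\Wp$ onto $\Wd$. As $v \in \Vp$ was arbitrary, every projected restriction is a diffeomorphism, which is precisely the definition of flatness for $F$ with respect to the $I$-decompositions.

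There is essentially no obstacle here, since the substantive analytic work — verifying $\Cone$-regularity and establishing the uniform coercive lower bound on $DF_v$ — has already been carried out in Propositions \ref{prop:nemcone} and \ref{prop:bound}. The only mild subtlety worth mentioning explicitly is that Hadamard's theorem as stated requires full Banach spaces, so one should note that $\Wp \subset X$ and $\Wd \subset Y$ are closed (being orthogonal complements of finite-dimensional subspaces in Hilbert spaces) and therefore Banach in their inherited norms.
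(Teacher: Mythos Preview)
Your proposal is correct and follows exactly the paper's own argument: the proof in the paper simply says ``combine the proposition and lemma above,'' meaning Proposition~\ref{prop:bound} and Lemma~\ref{lem:hadamard}. Your additional remarks on the $\Cone$-regularity of $F_v$ and the closedness of $\Wp,\Wd$ are appropriate justifications that the paper leaves implicit.
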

\begin{proof}
Simply combine the proposition and lemma above. \qed
\end{proof}

There is an analogous statement for $\tilde{F}: H^2_0 \to H^0$.

From the previous section, since $F$ is flat, its domain is foliated by $\Cone$ fibers of dimension $\dim \Vp = \dim \Vd$, which are transversal to the horizontal affine subspaces $x + \Wp$ and are parameterized diffeomorphically by the height function. The bound in Proposition \ref{prop:bound} allows to make precise the idea that fibers are uniformly steep and sheets are uniformly flat.

\begin{prop} \label{prop:stability}
   Let $f$ be appropriate, $I = [a,b] \supset \overline{\partial_2f(\Omega,\RR)}$ and $\Wp \oplus \Vp$ and $\Wd \oplus \Vd$ be the corresponding $I$-decompositions of $X$ and $Y$. Denote by \calK\ an index set for the basis of eigenfunctions spanning $\Vp= \Vd$ and let
   \[
    u(t)=w(t)+ v(t), \hbox{ for }  w(t)\in\Wp,\ v(t)=\sum_{i\in\calK}t_i\,\varphi_i\in\Vp, \ t = (t_1, \ldots,t_{|\calK|})
   \]
   be a parametrization of a fiber $\alpha$ of the flat map  $F: X \to Y$.
   Then there exists a constant $C$, independent of $t$, such that
   \[
    \onenrm[\nabla_{t} w(t)]\le C \sum_{i\in {\calK}} \onenrm[\varphi_i].
   \]
   In particular, there exist constants $A,\ B$, independent of $t$, such that
   \[ \onenrm[w(t)]\le A+B\norm[t].\]
   Let $u \in X$ and consider the sheet ${\cal W}_u =F(u +\Wp)$ with tangent space $T_{F(u)}{\cal W}_u$ at $F(u)$.
   Then the angle between a vector in $T_{F(u)}{\cal W}_u$ and its orthogonal projection in $\Wd$ is (uniformly) bounded above by a constant less then $\pi/2$.
\end{prop}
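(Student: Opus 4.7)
The strategy is to combine implicit differentiation of the fiber identity with the coercive bound of Proposition \ref{prop:bound}, exploiting the fact that the $I$-decomposition is built from eigenfunctions of $-\lap_D$, so that $-\lap$ respects the splittings $X = \Wp \oplus \Vp$ and $Y = \Wd \oplus \Vd$.

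First I would prove the bound on $\nabla_t w(t)$. Since $u(t)$ parametrizes the fiber $\alpha = F^{-1}(g+\Vd)$, it satisfies the identity $\Pd F(w(t)+v(t)) = \Pd g$. Differentiating in $t_i$ and using $\partial_{t_i} v(t) = \varphi_i$ gives
\[
\Pd DF(u(t))(\partial_{t_i} w(t) + \varphi_i) = 0.
\]
Because $\partial_{t_i} w(t) \in \Wp$ and the restriction of $\Pd DF(u)$ to $\Wp$ is exactly $DF_{v(t)}(w(t))$, Proposition \ref{prop:bound} yields
\[
\onenrm[\partial_{t_i} w(t)] \le C^{-1} \negnrm[\Pd DF(u(t)) \varphi_i].
\]
The key simplification is that $-\lap \varphi_i = \lambda_i \varphi_i \in \Vd$ for $i \in \calK$, so $\Pd(-\lap \varphi_i)=0$ and hence $\Pd DF(u(t)) \varphi_i = -\Pd[\partial_2 f(\cdot,u(t))\varphi_i]$. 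The $L^\infty$-bound on $\partial_2 f$ together with the continuous embedding $L^2\hookrightarrow H^{-1}$ then gives $\negnrm[\Pd DF(u(t))\varphi_i] \le C'\zeronrm[\varphi_i] \le C''\onenrm[\varphi_i]$, with constants independent of $t$. Summing over $i\in\calK$ delivers the first inequality. The linear bound on $\onenrm[w(t)]$ then follows from the fundamental theorem of calculus applied along the segment from $0$ to $t$, combined with equivalence of norms on the finite-dimensional space $\Vp$.

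For the angle bound on sheets, I would translate the geometric condition into a quantitative ratio. Writing an arbitrary tangent vector as $y = DF(u)h$ with $h \in \Wp$ and decomposing $y = \Pd y + \Qd y$ orthogonally in $Y$, the angle $\theta$ between $y$ and $\Pd y$ satisfies $\cos\theta = \negnrm[\Pd y]/\negnrm[y]$. It thus suffices to exhibit a constant $K$ independent of $u$ and $h$ with $\negnrm[\Qd DF(u)h] \le K\, \negnrm[\Pd DF(u)h]$. The same Laplacian cancellation as before applies: expanding $h = \sum_{k\notin\calK} h_k \varphi_k$ one sees that $-\lap h$ lies in $\Wd$, so $\Qd DF(u)h = -\Qd[\partial_2 f(\cdot,u)h]$, which is bounded by $C_1 \onenrm[h]$ exactly as above. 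On the other hand, Proposition \ref{prop:bound} gives the lower bound $\negnrm[\Pd DF(u)h] \ge C_2 \onenrm[h]$, so $K = C_1/C_2$ works, and the angle is bounded above by $\arctan(K) < \pi/2$ uniformly.

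The only subtle point is the clean cancellation of the Laplacian terms after projection, namely $\Pd(-\lap \varphi_i) = 0$ for $i\in\calK$ and $\Qd(-\lap h) = 0$ for $h\in\Wp$. Both rely on the fact that the four projections are defined via eigenfunctions of $-\lap_D$ and that the orthogonality of these eigenfunctions holds simultaneously in $H^{-1}$, $H^0$, $H^1_0$ and $H^2_0$. Once these cancellations are in hand, both estimates reduce to a routine combination of the coercivity from Proposition \ref{prop:bound} and the $L^\infty$ bound on $\partial_2 f$.
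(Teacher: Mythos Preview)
Your argument is correct and follows the paper's route exactly: differentiate the fiber identity $\Pd F(u(t))=\mathrm{const}$, invoke the coercive lower bound of Proposition~\ref{prop:bound} on $DF_{v(t)}(w(t))$, and for the sheet statement compare $\negnrm[\Pd DF(u)h]$ to $\negnrm[DF(u)h]$. The one difference is that the paper gets the needed upper bounds directly from the uniform boundedness of $DF:X\to Y$ (so $\negnrm[\Pd DF(u)\varphi_i]\le\negnrm[DF(u)\varphi_i]\le C\onenrm[\varphi_i]$, and it bounds the cosine from below rather than the tangent from above), never using the Laplacian cancellations $\Pd(-\lap\varphi_i)=0$ and $\Qd(-\lap h)=0$ that you flag as the ``only subtle point''; those identities are valid but unnecessary, since $-\lap:X\to Y$ is already a bounded operator.
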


This result is a source of robustness for the numerics in the next sections.

\begin{proof}
  Fibers are inverses under $F$ of vertical affine subspaces in $Y$. Taking derivatives of $\Pd F(u(t))=\textrm{const.}$ with respect to $t_i$,
\begin{equation*}
 D\Pd F(u(t))\ \partial_{t_i}u(t)\ =\ \Pd DF(u(t))\ \partial_{t_i}u(t)=\Pd DF(u(t))\ ( \partial_{t_i}w(t)+\varphi_i)=0.
\end{equation*}
Since for any $h\in \Wp$ we have $\Pd DF(u(t))h=DF_{v(t)}(w(t))h$, for $h = \partial_{t_i}w(t)$,
    \[
    DF_{v(t)}(w(t))\partial_{t_i}w(t)=\Pd DF(u(t))\partial_{t_i}w(t)=-\Pd DF(u(t))\varphi_i.
    \]
Using  first the lower bound in Proposition \ref{prop:bound} and then the boundedness of $DF$,
\[
    C_1 \onenrm[\partial_{t_i}w(t)]\le \negnrm[DF_{v(t)}(w(t))\partial_{t_i}w(t)]=\negnrm[\Pd DF(u(t))\varphi_i]\le C_2 \onenrm[\varphi_i],
\]
for some  constant $C_2$. Thus $\onenrm[\nabla_{t} w(t)]\le C \sum_{i\in {\calK}} \onenrm[\varphi_i]$, for some other  constant $C$.
A bound of the form $\onenrm[w(t)]\le A+B\norm[t]$ is now immediate.

To see that $T_{F(u)}{\cal W}_u$ is bounded away from the vertical subspace, consider the sequence of simple estimates, for $h\in\Wp$:
\[ C_1 \onenrm[h] \le \negnrm[\Pd DF(u)h] \le \negnrm[DF(u)h] \le C_3 \onenrm[h]\]
The cosine between a vector $ DF(u)h \in T_{F(u)}{\cal W}_u$ and the horizontal subspace $\Wd$ is
$\negnrm[\Pd DF(u)h]/\,\negnrm[DF(u)h]$, which is bounded from below by $C_1/C_3$.
\qed
\end{proof}

A regularity theorem for $F$, in the sense that $g\in\Cinfty[\Omega]\Rightarrow u\in\Cinfty[\Omega]$ for $F(u)=g$, would imply that points in the same fiber have the same  differentiability.

\section{Finding Preimages under $F$}\label{sec:algo}

We now describe an algorithm to solve
 $F(u)= -\lap u - f(\cdot,u) = g, \ u|_{\partial \Omega} = 0.$
 The details of implementation are handled in Section \ref{sec:eg}.
The equation is interpreted as the computation of the preimages of $g$ under $F: X = H^1_0(\Omega) \to Y = H^{-1}(\Omega)$.

For $g \in H^0$, there is an alternative point of view, which we do not treat in this paper: one might work instead with  $\tilde{F}:H^2_0(\Omega) \to H^0(\Omega)$, which shares the same geometric properties than $F$, as commented below Theorem~\ref{theo:Fvdiffeo}.
However, the discretizations will be performed by choosing appropriate finite elements, and the programming becomes easier
for the less restrictive basis used in $H^1$, as opposed to the finer elements in $H^2$. Clearly, the preimages of $g \in H^0$ under $F$ are in $H^2$.

We assume that the nonlinearity $f$ is an appropriate function and the interval $I=[a,b]$ contains $\overline{\partial_2f(\Omega,\RR)}$,
 so that, by Theorem \ref{theo:Fvdiffeo}, $F: X \to Y$ is flat with respect to the $I$-decompositions $X=\Wp \oplus \Vp$ and $Y=\Wd \oplus \Vd$.

In a nutshell, split $g= \Pd g + \Qd g = g_W + g_V$.
The inversion under $F$ of the vertical affine space $g_W + \Vd$ gives rise to a fiber $\alpha_g$ which contains all the solutions
of the original equation. The algorithm first identifies, for a fixed $v\in\Vp$, a point $u_g \in \alpha_g \cap \{v+\Wp\}$:
 this is essentially handling the equation $\Pd F(u_g) = g_W$ in $\{v+\Wp\}$. The search for solutions then boils down to a
finite dimensional problem along $\alpha_g$, which corresponds to the bifurcation equation $\Qd F(u) = g_V $.

\subsection{Moving in the space of fibers}\label{subsec:movinfib}

Our first goal is to reach a point $u_g$ in the fiber $\alpha_g=F^{-1}(g+\Vd)$, or more realistically,
close to it. For an arbitrary $v \in \Vp$, we search
the unique point $u$ of $\alpha_g$
in the horizontal affine space $v + \Wp$ given by Proposition \ref{prop:fibermanifold}. This is equivalent to solving
\begin{equation*}
\Pd F(v+w) = F_v(w)=\Pd g, \quad  w \in \Wp.
\end{equation*}
Since $F$ is flat, for each $v \in \Vp$, $F_v :  \Wp \to \Wd$ is a diffeomorphism so that, for any $w \in \Wp$,
%$DF_v(w) = -\Pd\lap\Pp -\Pd \partial_2 f(\cdot,u)\Pp$
$DF_v(w) = -\Pd\lap -\Pd \partial_2 f(\cdot,u)$
 is an isomorphism. Thus, we may consider
Newton's method on $F_v$ to move horizontally in $\Wp$.
However, if we restrict our computations to horizontal subspaces, our finite elements discretizations will not yield sparse matrices.
It is natural, then, to search for an extension to the full space of the operator $DF_v$ which is invertible and easy to compute.

For $u\in X$ we define the linear operator $L_c(u): X \to Y$ by
  \[
  L_c(u) z=-\lap z- \Pd \partial_2f(\cdot,u) \Pp z - c\, \Qd \Qp z =
  -\lap z- \Pd \partial_2f(\cdot,u) \Pp z - c\, \Qp z,
  \]
since $\Vp = \Vd$.
\begin{prop}\label{prop:Lcu} Write $u = w + v \in \Wp\oplus \Vp$. The restrictions of $L_c(u): X \to Y$ to $\Wp$ and $\Vp$ are $DF_{v}(w): \Wp \to \Wd$ and $-\lap -c\,I:\Vp \to \Vd$.
\end{prop}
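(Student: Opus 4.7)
The statement is a direct verification from the definitions, so my plan is to simply split $z \in X$ according to $z = \Pp z + \Qp z$ and evaluate $L_c(u)z$ on each summand. The only conceptual point to keep in mind beforehand is the interplay between the two orthogonal decompositions. Because $\Vp = \Vd$ is spanned by eigenfunctions $\varphi_i$ which are simultaneously orthogonal for the $H^0$, $H^1_0$ and $H^{-1}$ inner products, the identification $X \simeq Y$ given by $u \mapsto \oneprod{u}{\cdot}$ (i.e., $u \mapsto -\lap u$) sends $\Wp$ to $\Wd$ and $\Vp$ to $\Vd$. In particular, for any $z \in \Wp$ one has $-\lap z \in \Wd$, so $\Pd(-\lap z) = -\lap z$; likewise $-\lap z \in \Vd$ when $z \in \Vp$.

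For the first claim, I take $z \in \Wp$, so that $\Pp z = z$ and $\Qp z = 0$. Substituting in the definition,
\[
L_c(u)z \;=\; -\lap z \;-\; \Pd\,\partial_2 f(\cdot,u)\,z \;-\; 0.
\]
By the observation above, $-\lap z = \Pd(-\lap z)$, so this equals $\Pd\bigl(-\lap z - \partial_2 f(\cdot,u)\,z\bigr) = \Pd\, DF(u)\,z$. Since $F_v(w') = \Pd F(w'+v)$ for $w' \in \Wp$, the chain rule at $w' = w$ gives $DF_v(w)\,z = \Pd\,DF(w+v)\,z = \Pd\,DF(u)\,z$, which matches exactly.

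For the second claim, I take $z \in \Vp$, so that $\Pp z = 0$ and $\Qp z = z$. The definition collapses to
\[
L_c(u)z \;=\; -\lap z \;-\; 0 \;-\; c\,z \;=\; (-\lap - c\,I)\,z,
\]
with no contribution from the nonlinearity. Since $\Vp$ is spanned by eigenfunctions $\varphi_i$ with $i \in \calK$, and $-\lap \varphi_i = \lambda_i \varphi_i$, the element $-\lap z$ lies in $\Vd$ (under the $X \simeq Y$ identification), so $-\lap - cI$ genuinely maps $\Vp$ into $\Vd$, as required.

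I do not anticipate any real obstacle: the whole content of the proposition is that the two extra terms in $L_c(u)$ have been placed so as to act on exactly one of the two invariant factors. The only place one could slip is forgetting that $\Pd$ commutes with $-\lap$ on $\Wp$ (equivalently, that $-\lap$ respects the $I$-decomposition), and I would flag this explicitly when writing the proof.
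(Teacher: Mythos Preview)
Your proof is correct and follows essentially the same approach as the paper: split $z$ into its $\Wp$ and $\Vp$ components and evaluate $L_c(u)$ on each. You are slightly more explicit than the paper in verifying that $-\lap$ respects the $I$-decomposition (so that the outputs land in $\Wd$ and $\Vd$), which is a welcome clarification rather than a difference in strategy.
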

\begin{proof}
% For $z = w+v \in \Wp\oplus \Vp$,
%\begin{align*}
%  L_c(u)(w+v)&=-\lap (w+v)- \Pd \partial_2f(\cdot,u) \Pp (w+v)-c\,\Qp(w+v)\\
%  &= \left(-\lap w - \Pd \partial_2f(\cdot,u) w\right)+\left(-\lap v -c\,v\right)\\
%  & = DF_{v}(w)+( -\lap-c) v.
%\end{align*}
 For $z = z_W+z_V \in \Wp\oplus \Vp$,
\begin{align*}
  L_c(u)z&=-\lap (z_W+z_V)- \Pd \partial_2f(\cdot,u) \Pp (z_W+z_V)-c\,\Qp(z_W+z_V)\\
  &= \left(-\lap z_W - \Pd \partial_2f(\cdot,u) z_W\right)+\left(-\lap z_V -c\,z_V\right)\\
  & = DF_{v}(w)z_W+( -\lap-c\,I) z_V.
\end{align*}
    \qed
\end{proof}

Notice that $L_c(u)$ is an integro-differential operator. This is not a problem for the finite elements discretization and has an added bonus the preservation of sparsity of the relevant matrices.

To reach $u_g \in v + \Wp$ in the fiber $\alpha_g$, start with an arbitrary $u_0 \in v+\Wp$.
To update $u_n\in v+\Wp$, solve
\[ L_c(u_{n}) h = \Pd  (g-F(u_{n}))
\quad \textrm{and set}\quad u_{n+1}=u_{n}+\Pp \, h. \]
The projection in the formula for $u_{n+1}$ is redundant, but it removes possible numerical errors
that might give rise to a nontrivial vertical component when solving for $h$.
Actually, from Proposition \ref{prop:Lcu},
\begin{equation}\label{eq:Lc}
u_{n+1}=u_{n}+\Pp \, \tilde{h}, \quad\textrm{where }\quad L_c(u_{n}) \tilde{h} = g-F(u_{n}).
\end{equation}

Numerical errors self-correct, in the spirit of Newton's method: termination occurs once the norm of the error
$e_n=\Pd  (g-F(u_n))$ is sufficiently small, yielding a point $u_g$ essentially in $\alpha_g$.

In principle, convergence is not expected and might require prudence: inversion of points along the horizontal segment joining $\Pd F(u_0)$ to $\Pd g$. This always works in exact arithmetic, since $F_{v_0}$ is a \Cone\  diffeomorphism.
%Indeed, the inverse of the segment under the diffeomorphism $F_{v_0}$ is a compact curve, and at each point $x_W$ of this curve, Newton's method solves $F_{v_0}(\tilde{x}_W) = \tilde{y}_W$ for points $\tilde{y}_W$ sufficiently close to $y_W=F_{v_0}(x_W)$. This is a consequence of $F$ being $\Cone$, as in the hypotheses of the inverse function theorem. \comment{check this}

The algorithm above also implements the diffeomorphism $\mathcal{H}_g: \Vp \to \alpha_g$
introduced in  Proposition \ref{prop:fibermanifold} ---
it suffices to start from $v \in \Vp$ and
move horizontally until Newton's iteration reaches $\alpha_g$.

\subsection{Moving Along a Fiber}
%Once $u_g \in \alpha_g$ is identified, the original problem reduces to a finite-dimensional issue. Said differently, we should invert the restriction $F_g: \alpha_g \to g + \Vd$.
%Thus, inverting $F_g$ is equivalent to inverting
%$\mathcal{F}_g : \Vp \to \Vd$ given by $\mathcal{F}_g = \Qd \circ F \circ \mathcal{H}_g$.

Once $u_g \in \alpha_g$ is identified, the original problem reduces to a finite-dimensional issue. Said differently, we should invert the restriction of $F$ to $\alpha_g$, which amounts to inverting
$\mathcal{F}_g : \Vp \to \Vd$ given by $\mathcal{F}_g = \Qd \circ F \circ \mathcal{H}_g$.

Actually, when a value $\mathcal{H}_g(v_0)$ has been computed, we may compute $\mathcal{H}_g(v_0 + p)$, for $p \in \Vp$,
by starting from $\mathcal{H}_g(v_0)+p$ instead of $v_0+p$ and then moving horizontally with Newton's method until we reach $\alpha_g$.
The advantage lies in the fact that, for small $p$, there will be less horizontal displacement with the new initial condition.
The resulting algorithm is essentially a predictor-corrector scheme.
Figure \ref{fig:mapfiber} illustrates the procedure in the case when $\dim \Vp = |\calK| = 1$, so that $\alpha_g$ is a curve.

\begin{figure}[h]
\centering
\mbox{\subfigure{\includegraphics[width=.45\linewidth]{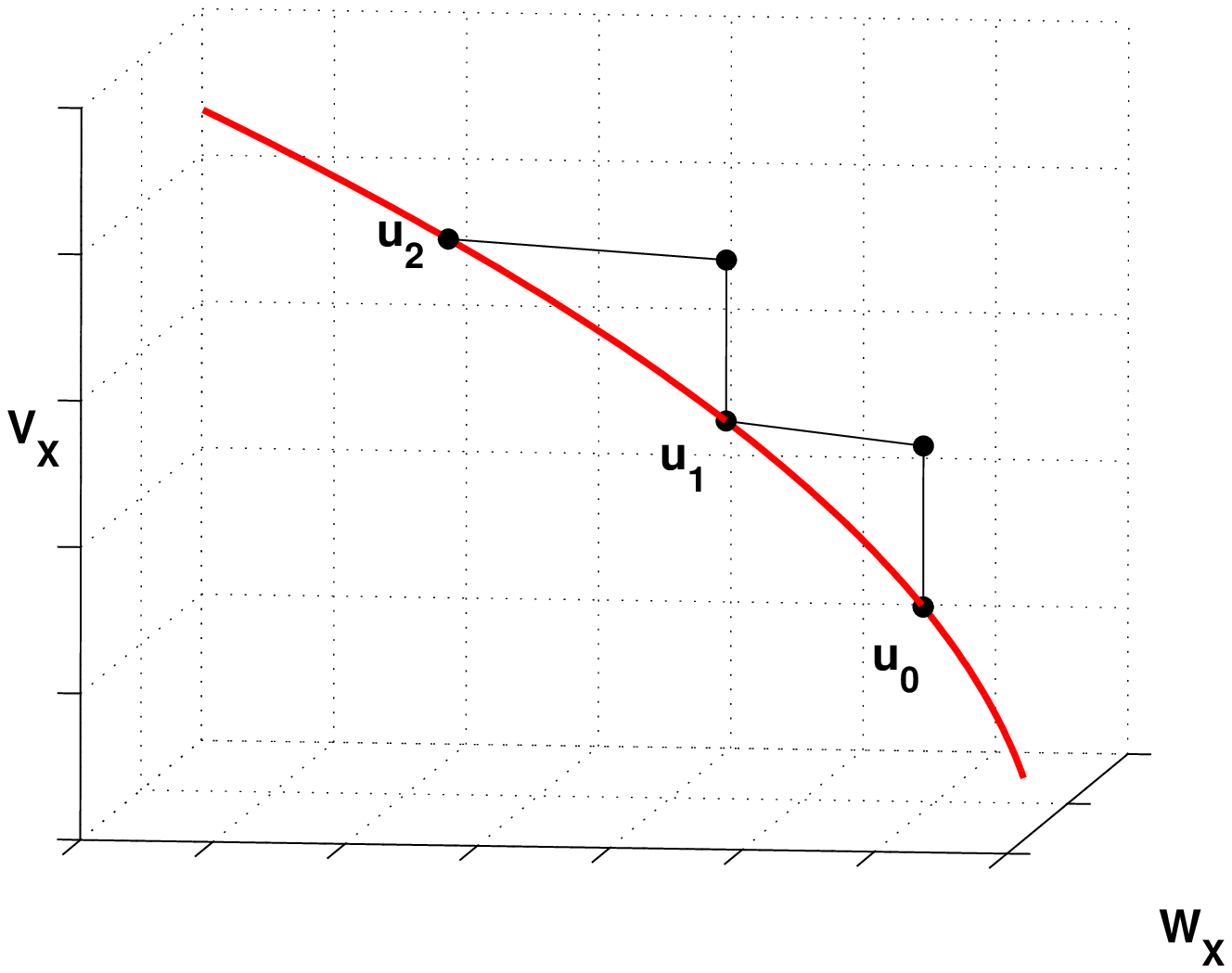}}\quad\quad
\subfigure{\includegraphics[width=.45\linewidth]{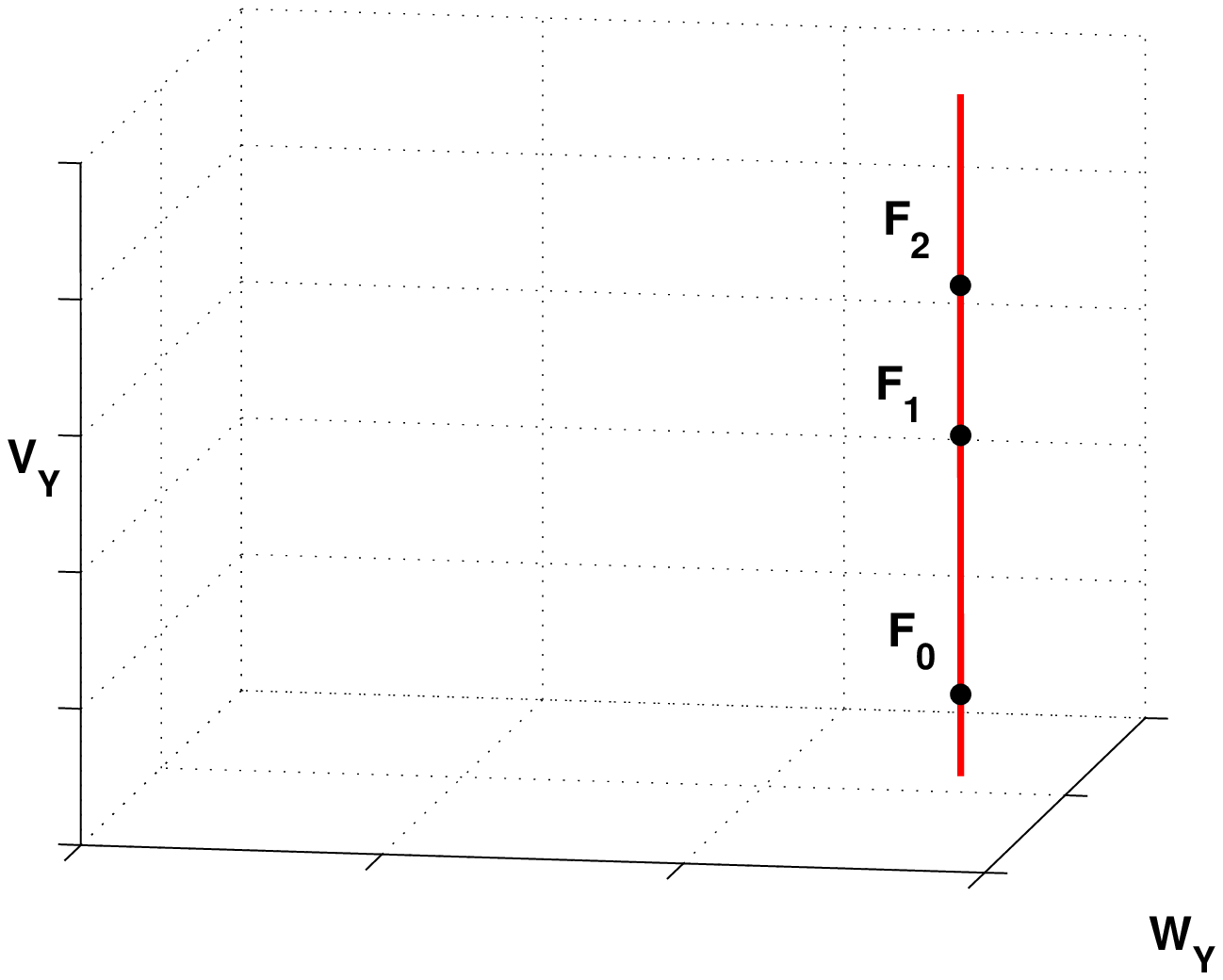} }}
\caption{Mapping a  1-D fiber} \label{fig:mapfiber}
\end{figure}

\section{ Implementing the algorithm}\label{sec:fem}

We now describe the finite element discretization of the algorithm above.
Given a domain $\Omega \subset \RR^n$, we consider a triangulation with interior vertices $\nu_j,\ j=1,\ldots,N$. The  \emph{nodal functions} $\uh[\psi]_j$
are continuous functions which are linear on each element, with values on vertices given by $\uh[\psi]_j(\nu_k) = \delta_{jk}$. The nodal functions span the finite element space \fe.

\subsection{Moving Horizontally}

For $\uh \in \fe \subset X = \Hzeroone[\Omega]$ and $g \in \Ltwo[\Omega] \subset Y = \Hnegone[\Omega]$,
we now discretize
\begin{equation}\label{eq:todisc}
L_c(u)z=-\lap z - \Pd \partial_2f(\cdot,u) \Pp z  - c\, \Qp z= g-F(u).
\end{equation}
As described in Section~\ref{subsec:movinfib}, this is the main step to identify the fiber $\alpha_g$.

Functions in $X$ and $Y$ are approximated by elements in \fe, but their identification is different.
We take the nodal functions $\{\uh[\psi]_j\}$ as a basis for $\Xh = \fe \subset X$. For
$\uh \in \Xh$,  we have $\uh(x)=\sum_j\unbar_j\uh[\psi]_j(x)$, where $\unbar_j = \uh(\nu_j)$.

For functions $g \in \fe \subset Y$,
we are interested in the values of the (independent) functionals $\ell_i(g)=\zeroprod{\uh[\psi]_i}{g}=\uhat[g]_i$.
We take in $\fe$
the dual basis $\ell_j^*, j = 1,\ldots,N$, defined by $\ell_i(\ell_j^*)=\delta_{ij}$,  so that
$g(x) = \sum_j \uhat[g]_j \ \ell_j^*(x)$. The \emph{mass matrix} $\mat$ changes coordinates:
\[
\mat\,\unbar=\uhat,\quad \mat_{ij}=\zeroprod{\uh[\psi]_i}{\uh[\psi]_j}.
\]

In coordinates $\unbar$ and $\uhat[g]$, the expression $-\lap u = g$ becomes
\[
\mat[K]\,\unbar=\uhat[g],\quad \mat[K]_{ij}=\oneprod{\uh[\psi]_i}{\uh[\psi]_j},
\]
where $\mat[K]$ is the standard \emph{stiffness matrix}.

Define inner products
$\iprod[\Xh]{\unbar_1}{\unbar_2} =\iprod{\mat[K]\unbar_1}{\unbar_2}$
and
$\iprod[\Yh]{\uhat[g]_1}{\uhat[g]_2} =\iprod{\mat[K]^{-1}\uhat[g]_1}{\uhat[g]_2}$ in $\Xh$ and
$\Yh$
(here $\iprod{\,}{\,}$ denotes the standard inner product in Euclidean space). Notice the isometry
%$\iprod[\Xh]{\unbar}{\unbar[v]}=\iprod[\Yh]{\uhat}{\uhat[v]}.$
$\iprod[\Xh]{\unbar_1}{\unbar_2}=\iprod[\Yh]{\uhat[g]_1}{\uhat[g]_2}$, where $\mat[K]\unbar_i=\uhat[g]_i,\ i=1,2.$
The eigenpairs $\lambda_i$, $\varphi_{i},\ i\in {\calK}$,  have approximations $\lambda^h_i$, $\uh[\varphi]_i\in \Xh$ obtained by solving
\[
\mat[K] \uh[\underline{\varphi}]_i = \lambda^h_i \mat[M] \uh[\underline{\varphi}]_i,\quad \iprod[\Xh]{\uh[\underline{\varphi}]_i}{\uh[\underline{\varphi}]_i}=1.
\]
The approximate eigenfunctions $\uh[\varphi]_i$ span $\Vph =\Vdh\subset\Xh,$
which may be taken arbitrarily close to the vertical subspaces associated to the index set $\calK$ by choosing a small value of $h$. Similarly, the horizontal subspaces $\Wp$ and $\Wd$ are approximated by $\Wpt$ and $\Wdt$, the orthogonal complements of $\Vph$ and $\Vdh$ in $X$ and $Y$ respectively.

Proposition \ref{prop:stability} implies a certain kind of stability. The uniform steepness of fibers and the uniform flatness of sheets ensure preservation of flatness. More precisely,
if $F:X = \Wp \oplus \Vp \to Y= \Wd \oplus \Vd$ is flat,
then it is also flat with respect to the decompositions
\[X = \Wpt \oplus \Vph, \quad  Y =  \Wdt \oplus \Vdh, \]
provided that the respective subspaces are sufficiently close to each other.

Define $\Fh: \Xh \to \Yh$ by
\[
\Fh(\unbar)=\mat[K]\,\unbar-\mat\,f(\unbar)=\uhat[F],
\]
where $f(\unbar)$ is the vector whose coordinates are $f(\nu_j,\unbar_j)$. For small $h$, $\Fh$ is flat with
respect to the decompositions $\Xh = \Wph \oplus \Vph$
and $\Yh =  \Wdh \oplus \Vdh$, where the horizontal spaces $\Wph$ and $\Wdh$ are orthogonal to $\Vph = \Vdh$
in the discrete inner products.

Assuming $z\in\Xh$ in equation \eqref{eq:todisc} and taking the $L^2$ inner product of with the nodal function $\uh[\psi]_i \in X$, we obtain
\[
(\mat[K]\unbar[z])_i - \zeroprod{\Pd \partial_2f(\cdot,u) \Pp z}{\uh[\psi]_i}-c\zeroprod{\Qp z}{\uh[\psi]_i}=\uhat[g]_i-\uhat[F]_i.
\]
Since $\partial_2f(\cdot,u) \Pp z\in\Ltwo[\Omega]$, $\zeroprod{\Pd \partial_2f(\cdot,u) \Pp z}{\uh[\psi]_i}=\zeroprod{\partial_2f(\cdot,u) \Pp z}{\Pp\uh[\psi]_i}$ and we are left with discretizing $\Pp=I-\Qp.$
%\subsection{Decompositions, projections and the operator $L_u$}
In coordinates, $\displaystyle
\Qph \unbar[z]=\sum_{k}\iprod[\Xh]{\unbar[z]}{\uh[\underline{\varphi}]_k}
\,\uh[\underline{\varphi}]_k.
%=
%\sum_{k}\lambda^h_k\iprod{\unbar[z]}{\uhat[\varphi^h_k]}
%\,\unbar[\varphi_k]
$

Once we write $z=\sum_j\unbar[z_j]\uh[\psi]_j$, the discretization $\mat[L^h]$ of $L_c(u)$ is expressed in terms of the inner products
\[
\oneprod{\uh[\psi]_j}{\uh[\varphi]_i},\
\zeroprod{\partial_2f(\cdot,u)\uh[\psi]_j}{\uh[\psi]_k},\
\zeroprod{\partial_2f(\cdot,u)\uh[\psi]_j}{\uh[\varphi]_i},\
\zeroprod{\partial_2f(\cdot,u)\uh[\varphi]_i}{\uh[\varphi]_{i'}},
\]
where $j,k=1,\cdots,N$ and $i,i'\in {\calK}$. In our computations, we replaced $\partial_2f(\cdot,u)$ by the vector with coordinates $f(\nu_j,\unbar_j)$.

The discretization of the the uptdating $u_n \mapsto u_{n+1}$ defined in \eqref{eq:Lc} becomes then
\[
\unbar:=\unbar+\Pph\,\underline{\eta}, \quad\textrm{where}\quad \mat[L^h]\,\underline{\eta}=\hat{g}-\Fh(\unbar).
\]

\subsection{Moving along a fiber}

The finite dimensional inversion of a computable function, as $\mathcal{F}_g = \Qd \circ F \circ \mathcal{H}_g$,
is not a trivial issue.
What is needed is a solver which  takes into account the special features of maps between vertical subspaces (or, more geometrically, from fibers to vertical subspaces). In the
examples below, except for the last one, $|\calK| = 1$.

The fact that there was  a finite dimensional reduction for the equation $F(u) = g$ was implicit in \cite{berger:1974}, restated in \cite{smiley:1996} and stated in a very explicit form (Theorem 2.1) in \cite{smiley:2000}. Smiley and Chun (\cite{smiley:2001b}) considered the numerical inversion of restrictions of $F$ to given fibers, using an inversion algorithm they developed for locally Lipschitz maps between Euclidean spaces (\cite{smiley:2001}).

As usual, the more we know about $F$, the sturdier the numerics. The Ambrosetti-Prodi case is rather simple: the nonlinearity $f$ interacts only with $\lambda_1$ and $\partial_2f' >0$. The map $F$ sends fibers to folded vertical lines: as the height of a point in the fiber goes from $-\infty$ to $\infty$, the height of its image goes monotonically from $-\infty$ to a maximal point and
then decreases monotonically to $-\infty$. Dropping convexity allows for loss of monotonicity, but not of asymptotic behavior, as we shall see in the examples of the next section.

There are  theoretical results (\cite{podolak:1976}) that guarantee that under different, but stringent, hypotheses the Ambrosetti-Prodi pattern along fibers carries through. The numerics may be performed in more general conditions, providing strong evidence to the eventual outcome.

\section{Numerical Examples}\label{sec:eg}

All the examples in this chapter relate to the autonomous equation
\begin{equation*}
F(u) = - \lap u - f(u) = g
\end{equation*}
with Dirichlet boundary conditions on the
rectangle $\Omega=[0,1]\times[0,2]$, for which the smallest three (simple) eigenvalues are
\[ \lambda_1 = \frac{5}{4} \pi^2 \approx 12.34 ,\quad \lambda_2 = 2 \pi^2 \approx 19.74,
\quad \lambda_3 = \frac{13}{4} \pi^2 \approx 32.07.\]
The nonlinearities $f$ are always appropriate functions. When $f$ is convex, we take $f'(x)= \alpha \arctan (x) + \beta$ for different choices of the asymptotic parameters $\alpha$ and $\beta$.

Recall that first, given a horizontal affine subspace $v+ \Wp$ and a right hand side $g \in Y$, the algorithm searches for a point $u_g \in v+ \Wp$ in the fiber $\alpha_g$, using the iteration described in Section \ref{subsec:movinfib}. In each step we solve Equation \eqref{eq:Lc}: in the examples below, $c=0$.
Then inversion of $F:\alpha_g \to g + \Vd$ with basepoint $u_g$ obtains, in principle, all solutions of the equation.

The triangulation was generated with Matlab's PDE Toolbox and the matrices were programmed from scratch and compared to those computed by the toolbox, whenever possible.

\subsection{Finding $u_g$ in $\alpha_g$} \label{sec:movhor}

Consider the Ambrosetti-Prodi situation with $ f'(x)= \alpha \arctan (x) + \beta$
satisfying
$ \lim_{x \to \pm \infty} f'(x)= \lambda_1 \pm (\lambda_2 - \lambda_1)/2$.
%\[\Range(f') = \left( \frac{3 \lambda_1 - \lambda_2}{2}\, , \, \frac{\lambda_1 + \lambda_2}{2} \right) >0. \]
The right-hand side $g(x) = - 100 x(x-1)y(y-2)$
is chosen to resemble a very negative multiple of $\varphi_1$.

Usually one or two iterations of the horizontal step lead to an error which can only decrease by choosing a finer triangulation.
Newton's iteration was very successful: continuation arguments were not necessary.
An $m$-triangulation ${\cal T}_m$ splits each interval $[0,1]$ and $[0,2]$ in $2^m$ equal subintervals.
For $u_0 = 100 \varphi_2$, we present the normalized horizontal errors $e_n=|| \Pd (g - F(u_n))|| /|| \Pd (g - F(u_0))||$, $n=1,2$ and $3$, for triangulations with $m=3,4$ and $5$  for the $H^{-1}$ and $H^0$ norms.
\begin{center}
\begin{tabular}{|l|r|r|r|}
  \hline
  % after \\: \hline or \cline{col1-col2} \cline{col3-col4} ...
  m & $e_1\, (H^{-1})$ & $e_2$& $e_3$\\
  \hline
  3 & 1.42E-2 & 5.27E-5 & 4.48E-8  \\
  \hline
  4 & 1.70E-2 & 1.12E-4 & 3.93E-8  \\
  \hline
  5 & 1.75E-2 & 1.31E-4 & 4.25E-8 \\

  \hline
\end{tabular}, \quad
\begin{tabular}{|l|r|r|r|}
  \hline
  % after \\: \hline or \cline{col1-col2} \cline{col3-col4} ...
  m & $e_1\, (H^{0})$ & $e_2$& $e_3$\\
  \hline
  3 & 1.97E-2 & 9.37E-5 & 7.45E-8  \\
  \hline
  4 & 2.36E-2 & 1.74E-4 & 1.21E-7  \\
  \hline
  5 & 2.44E-2 & 1.93E-4 & 1.11E-7 \\

  \hline
\end{tabular}
\end{center}

In Figure \ref{fig:movhor} we show $g$ and the function $u_3$.

\begin{figure}[h]
\centering
\mbox{\subfigure{\includegraphics[width=.45\linewidth]{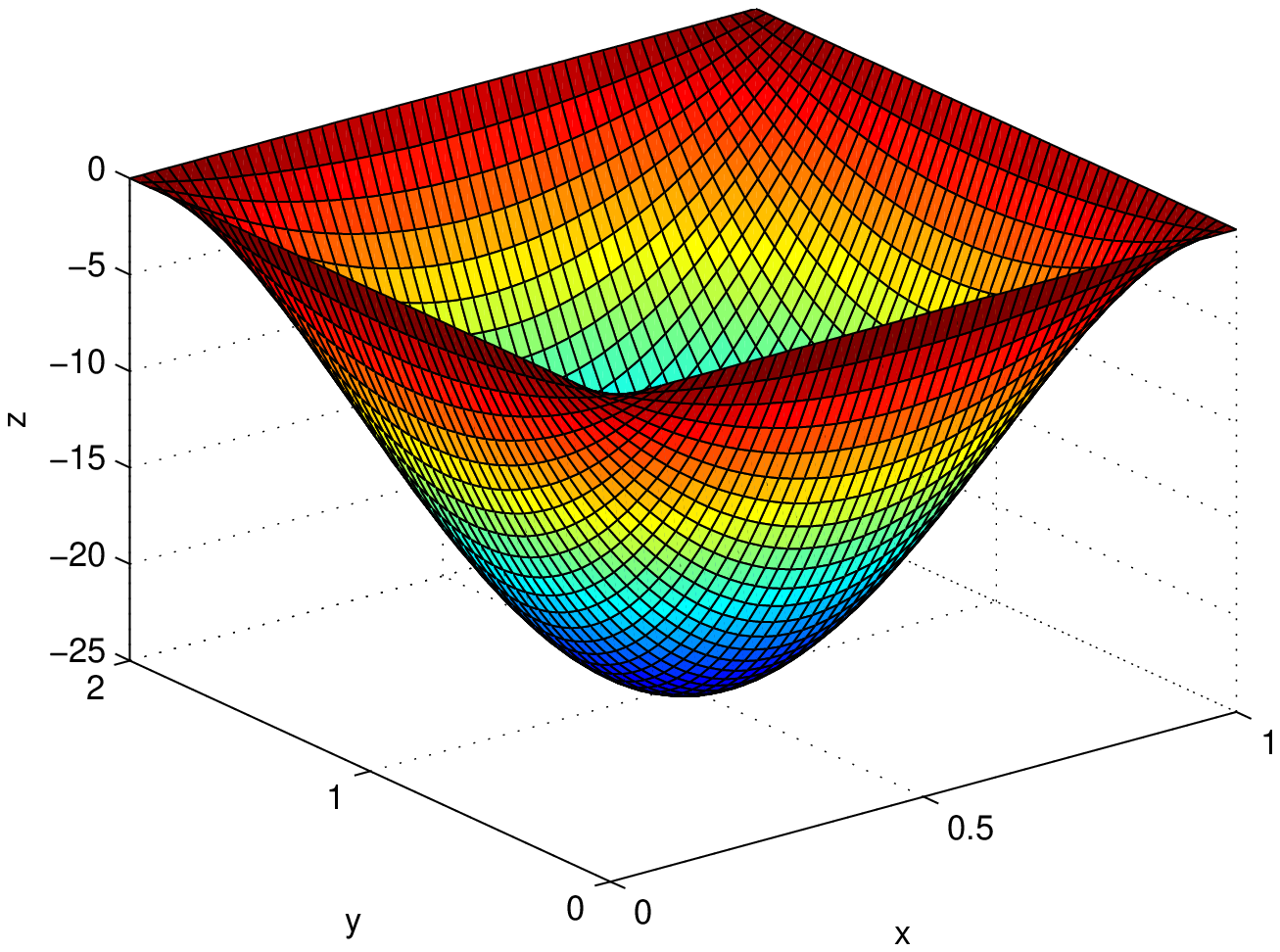}}\quad\quad
\subfigure{\includegraphics[width=.45\linewidth]{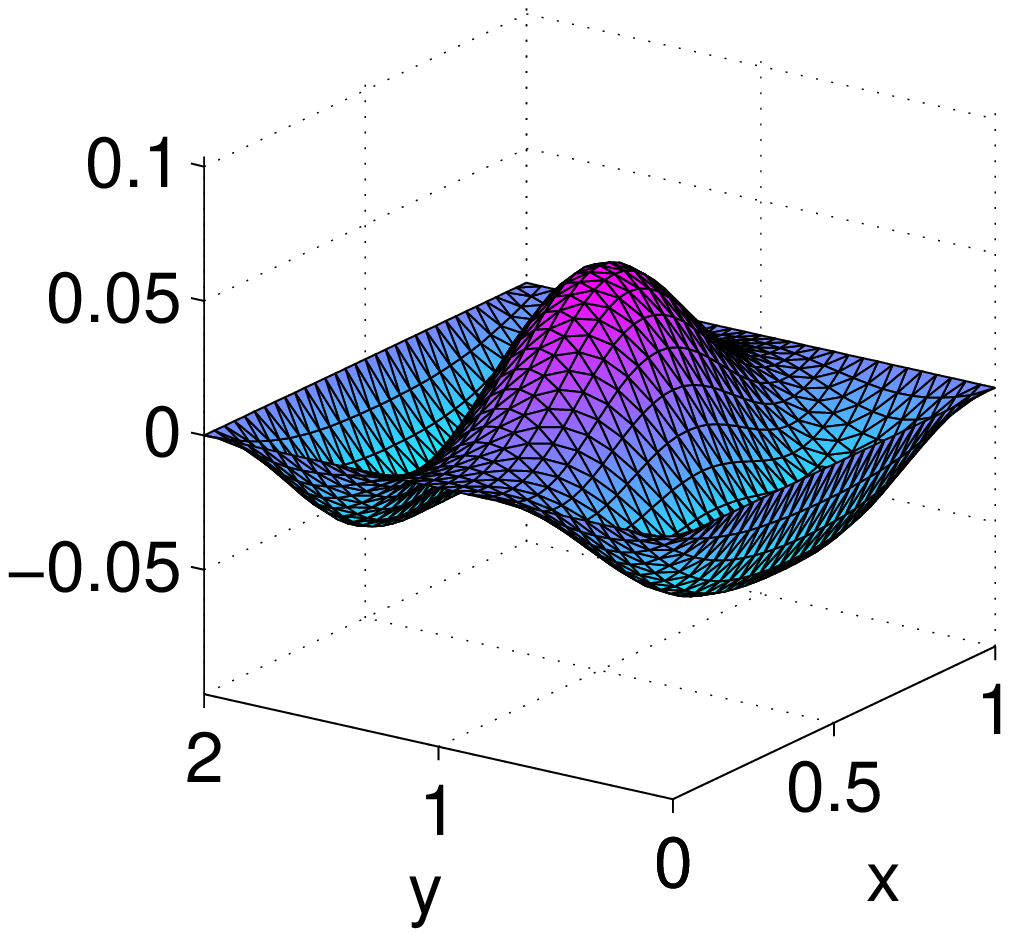} }}
\caption{A right-hand side $g$  and $u_g \in \alpha_g$ obtained from $u_0 \equiv 0$.} \label{fig:movhor}
\end{figure}

\subsection{Finding Solutions: Moving Along a Fiber} \label{sec:movver}

In this section, we prescribe a nonlinearity $f$, a point $u_0$ and study the restriction of $F$ to the fiber $\alpha_g$ for $g=F(u_0)$.
The eigenfunctions $\varphi_k$ are normalized in the \Hone-norm (resp. \Hnegone) in the domain (resp. counter-domain).

Each example starts with two graphs. In the first, we
plot $f'$ and mark with dotted lines the relevant eigenvalues.
The second graph plots the height of $F(u)$ against the height of a point $u \in \alpha_g$. Informally, it shows how the image of a fiber goes up and down: in particular, it indicates the number of solutions of
$F(u) = F(u_0)$. Additional solutions to the equation are then presented.
\subsubsection{Dolph-Hammerstein}

\begin{figure}[h]
\centering
\mbox{\subfigure{\includegraphics[width=.45\linewidth]{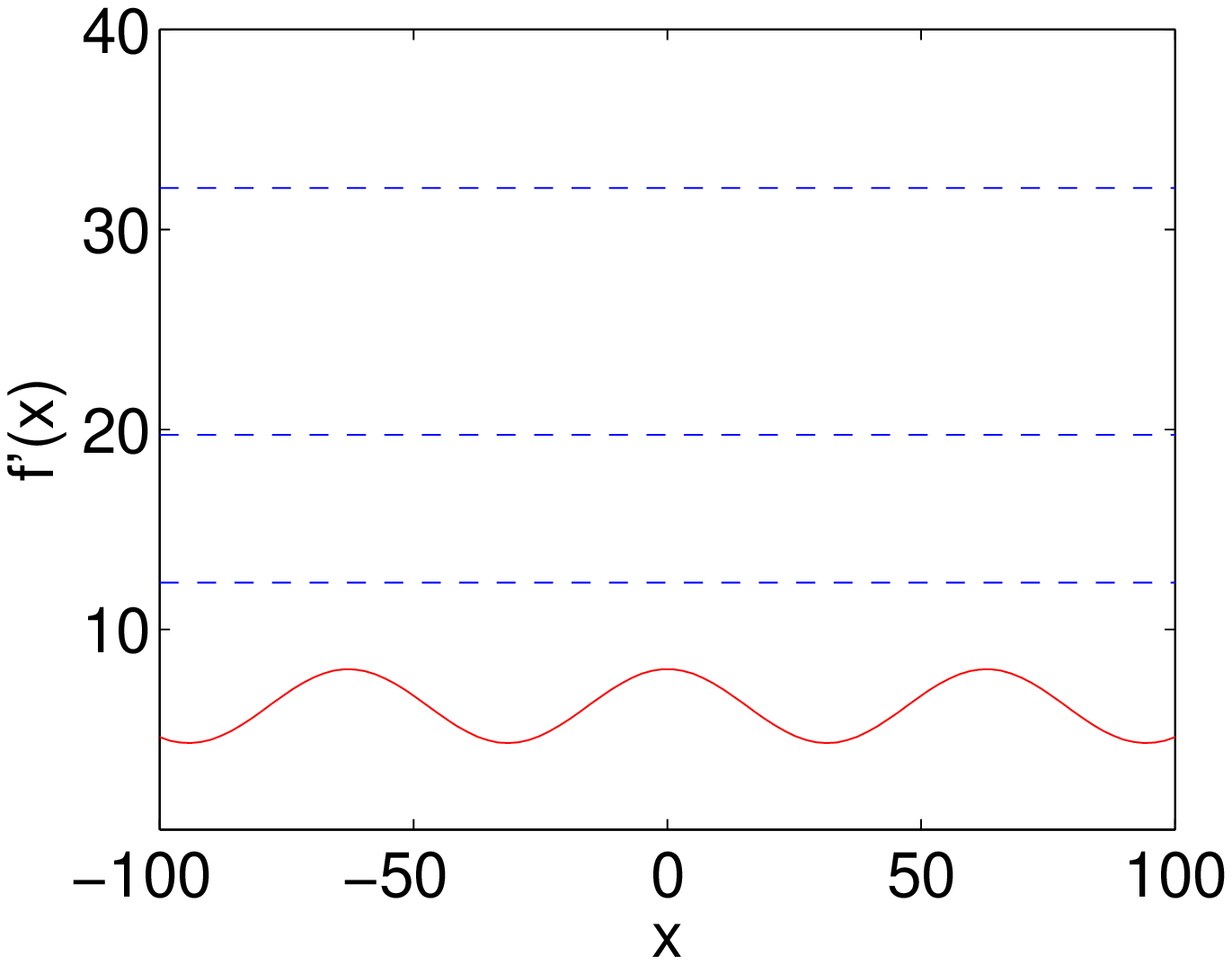}}\quad\quad
\subfigure{\includegraphics[width=.45\linewidth]{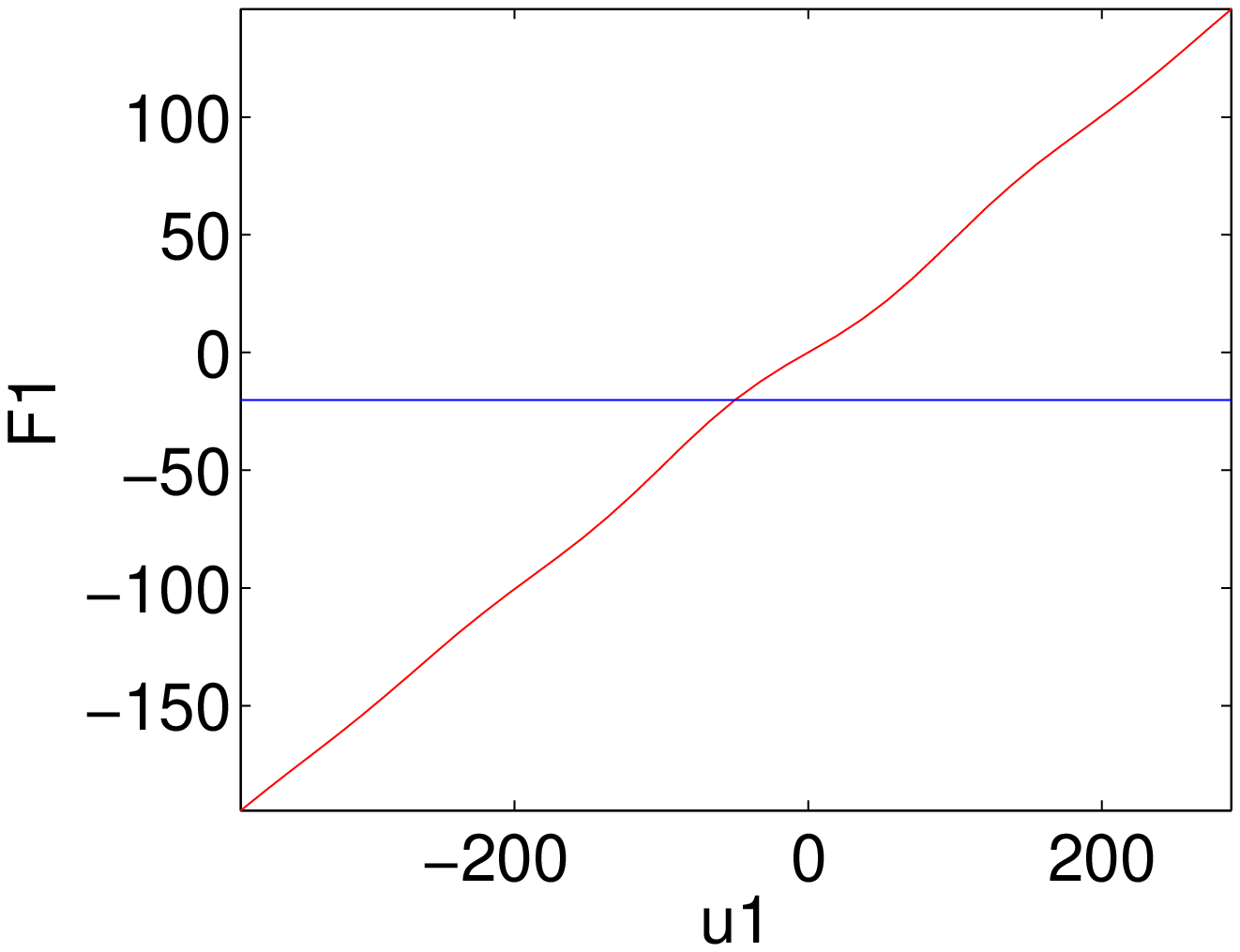} }}
\caption{$f'$ strictly below $\lambda_1$} \label{fig:hamsub1}
\end{figure}
\begin{figure}[h]
\centering
\mbox{\subfigure{\includegraphics[width=.45\linewidth]{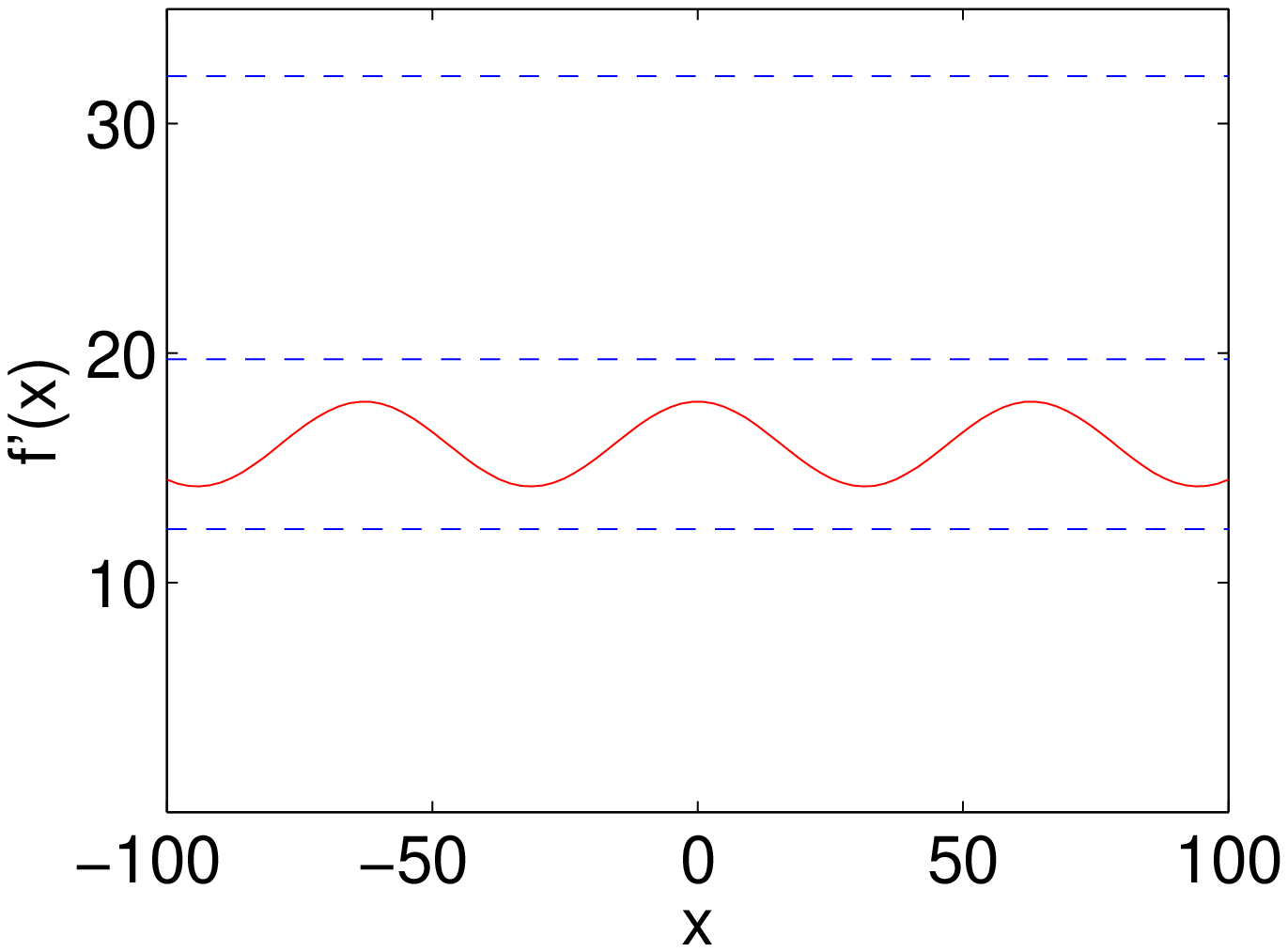}}\quad\quad
\subfigure{\includegraphics[width=.45\linewidth]{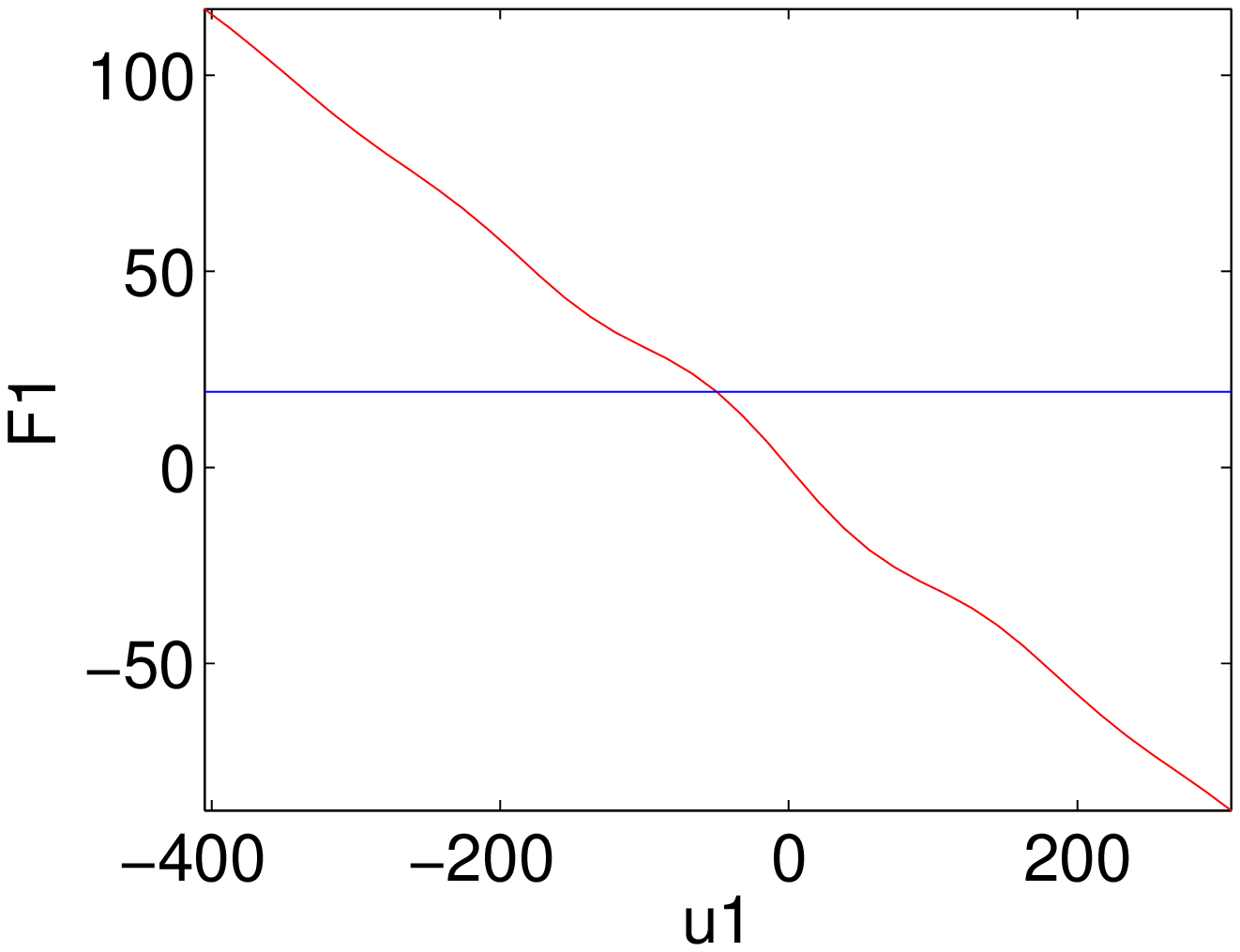} }}
\caption{$f'$ between $\lambda_1$ and $\lambda_2$}  \label{fig:hamsup1}
\end{figure}
The left of Figure \ref{fig:hamsub1} is the graph of $f'$: it lies below the first eigenvalue. The first three eigenvalues are marked as dotted lines.
The graph on the right illustrates the fact that as we move up along the fiber $u_1 = u_0 + t \varphi_1$, the corresponding point in the range $F_1 = F(u_1)$ also moves up.

Similarly, in Figure \ref{fig:hamsup1}, the derivative of $f$ lies strictly between $\lambda_1$ and $\lambda_2$. Here, moving up in the fiber, corresponds
to moving down in the range. The graphs are consistent with the fact that $F:X \to Y$ is a global diffeomorphism in both cases.

\subsubsection{Ambrosetti-Prodi}
We now return to the example of Section \ref{sec:movhor},
in which $\lambda_1$ is the only eigenvalue in $\overline{f'(\RR)}$. Naïvely, Figures \ref{fig:hamsub1} and \ref{fig:hamsup1} indicate that, as we move up in the fiber, the image under $F$ initially goes up, then down, as shown in Figure~\ref{fig:AP}.

\begin{figure}[h]
\centering
\mbox{\subfigure{\includegraphics[width=.45\linewidth]{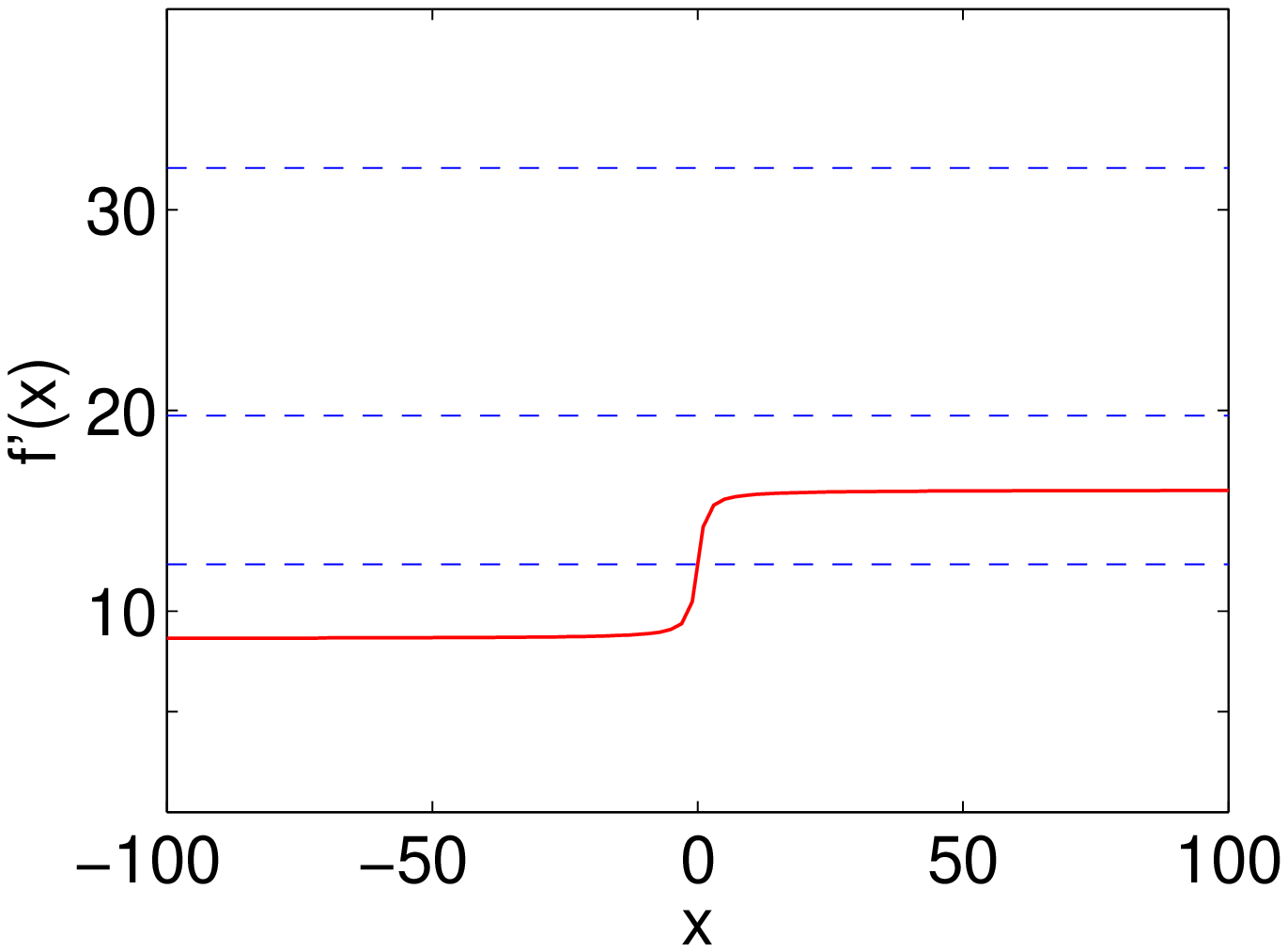}}\quad\quad
\subfigure{\includegraphics[width=.45\linewidth]{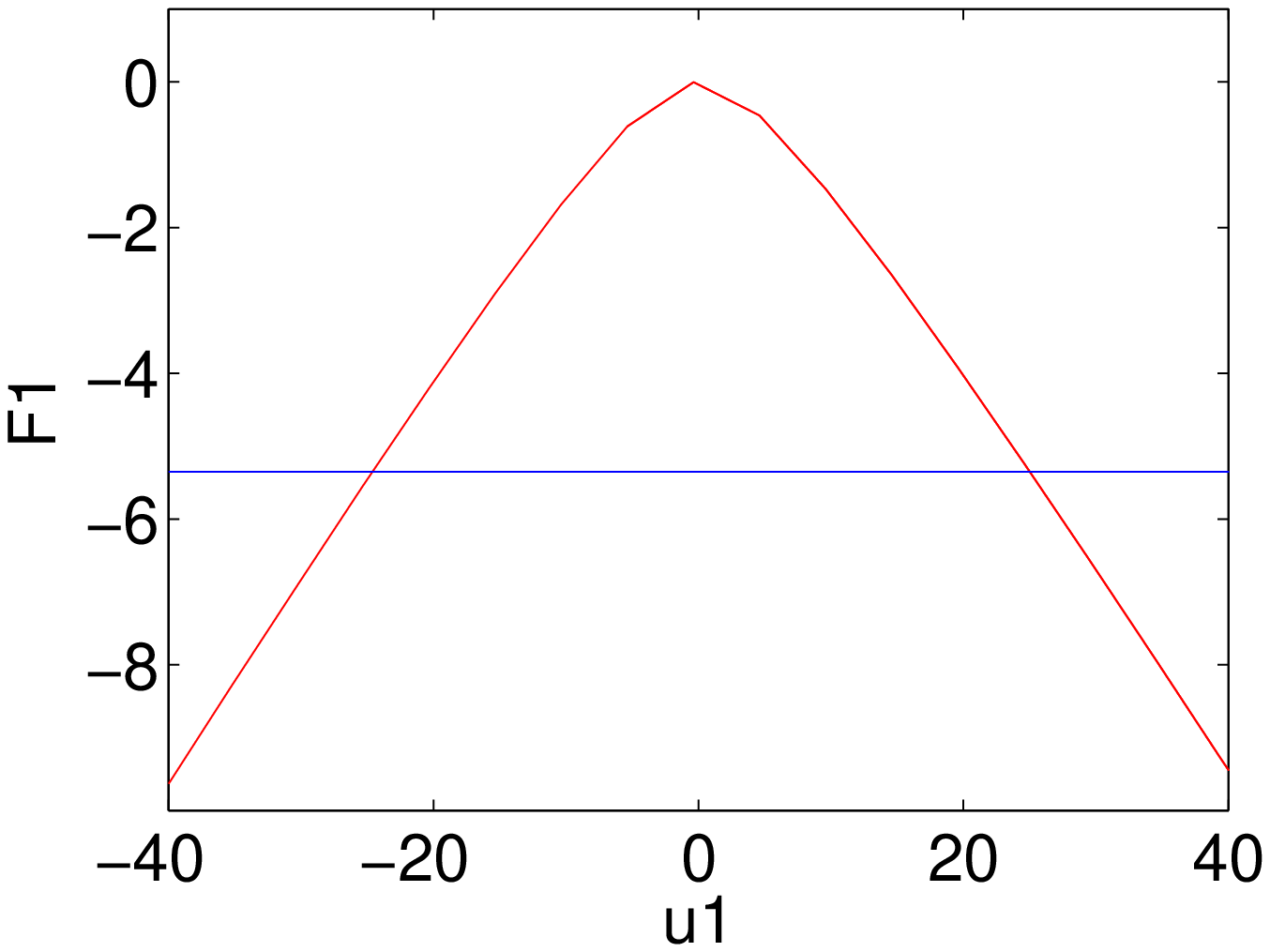} }}
\caption{$\overline{f'(\RR)}\cap\spectrum{-\lap}=\{\lambda_1\} = {\calK}$}  \label{fig:AP}
\end{figure}

Again, the picture is in agreement with the Ambrosetti-Prodi theorem: below a certain height, a point in the vertical line through $F(u_0)$ has two preimages.
The two preimages of $F(u_0)$ are shown in Figure \ref{fig:APsols}.
\begin{figure}[h]
\begin{center}
\subfigure{\includegraphics[width=.4\linewidth]{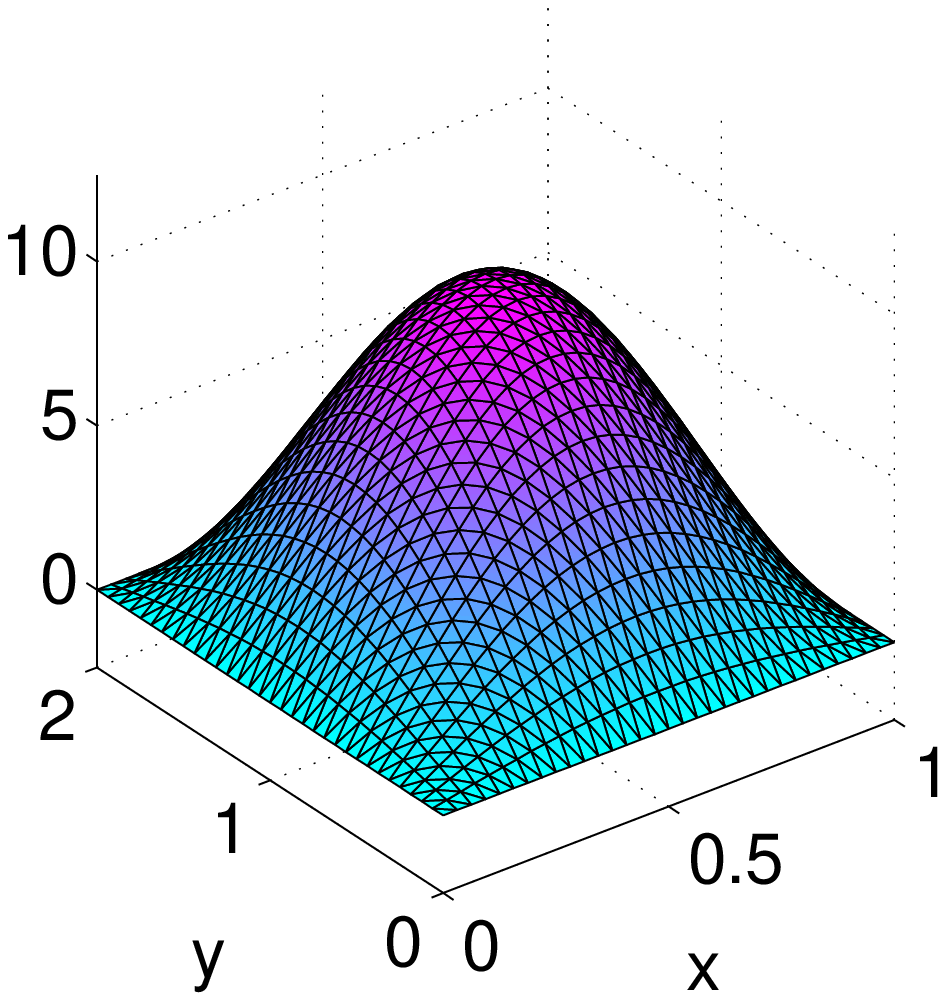}}\quad
\subfigure{\includegraphics[width=.4\linewidth]{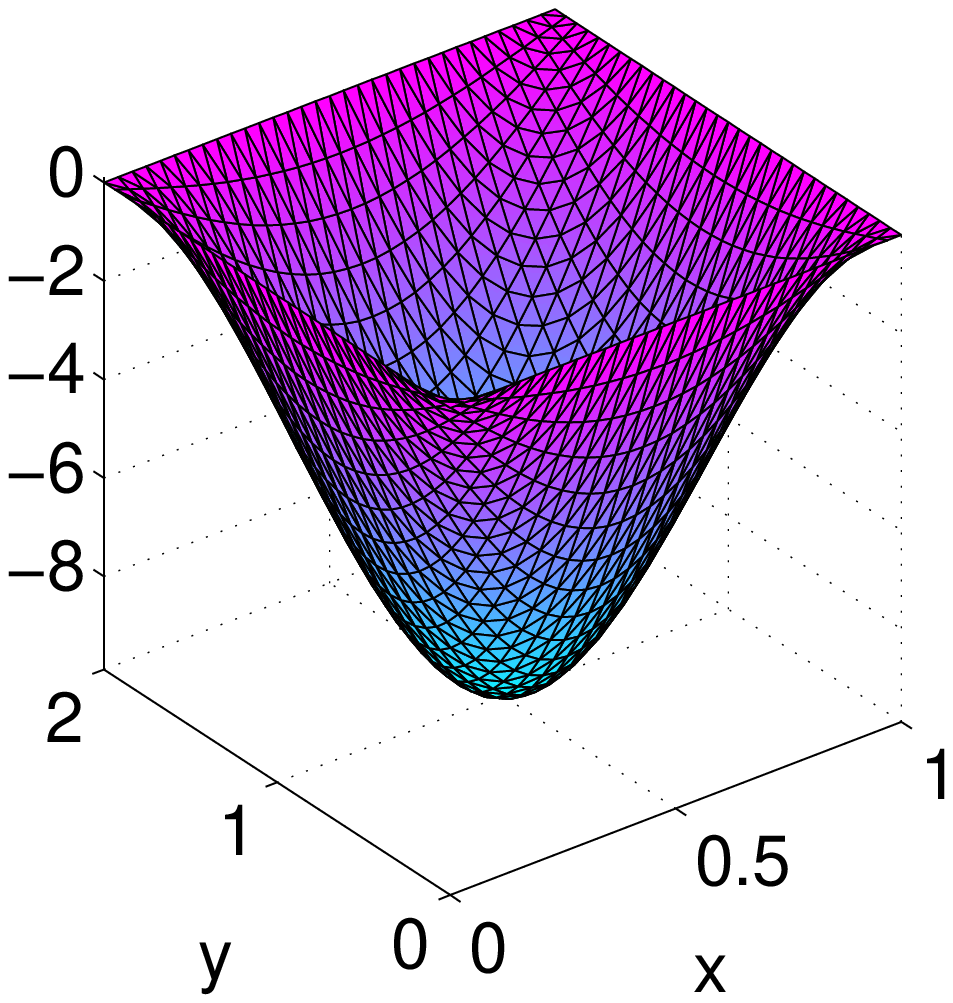} }
\end{center}
\caption{Ambrosetti-Prodi solutions}
\label{fig:APsols}
\end{figure}

\subsubsection{Non-convex $f$, ${\calK}= \{1\}$}
\begin{figure}[!h]
\centering
\mbox{\subfigure{\includegraphics[width=.45\linewidth]{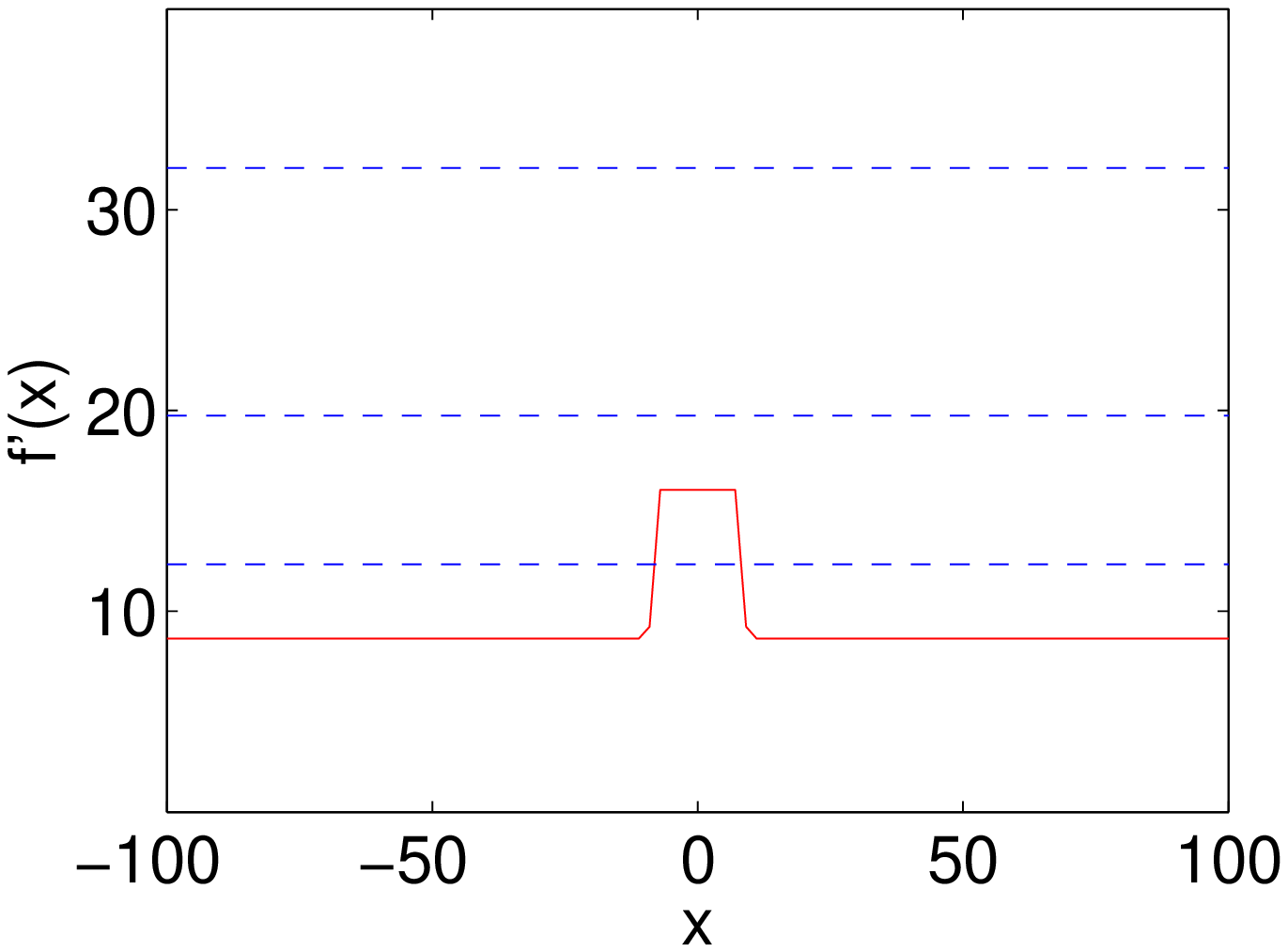}}\quad\quad
\subfigure{\includegraphics[width=.45\linewidth]{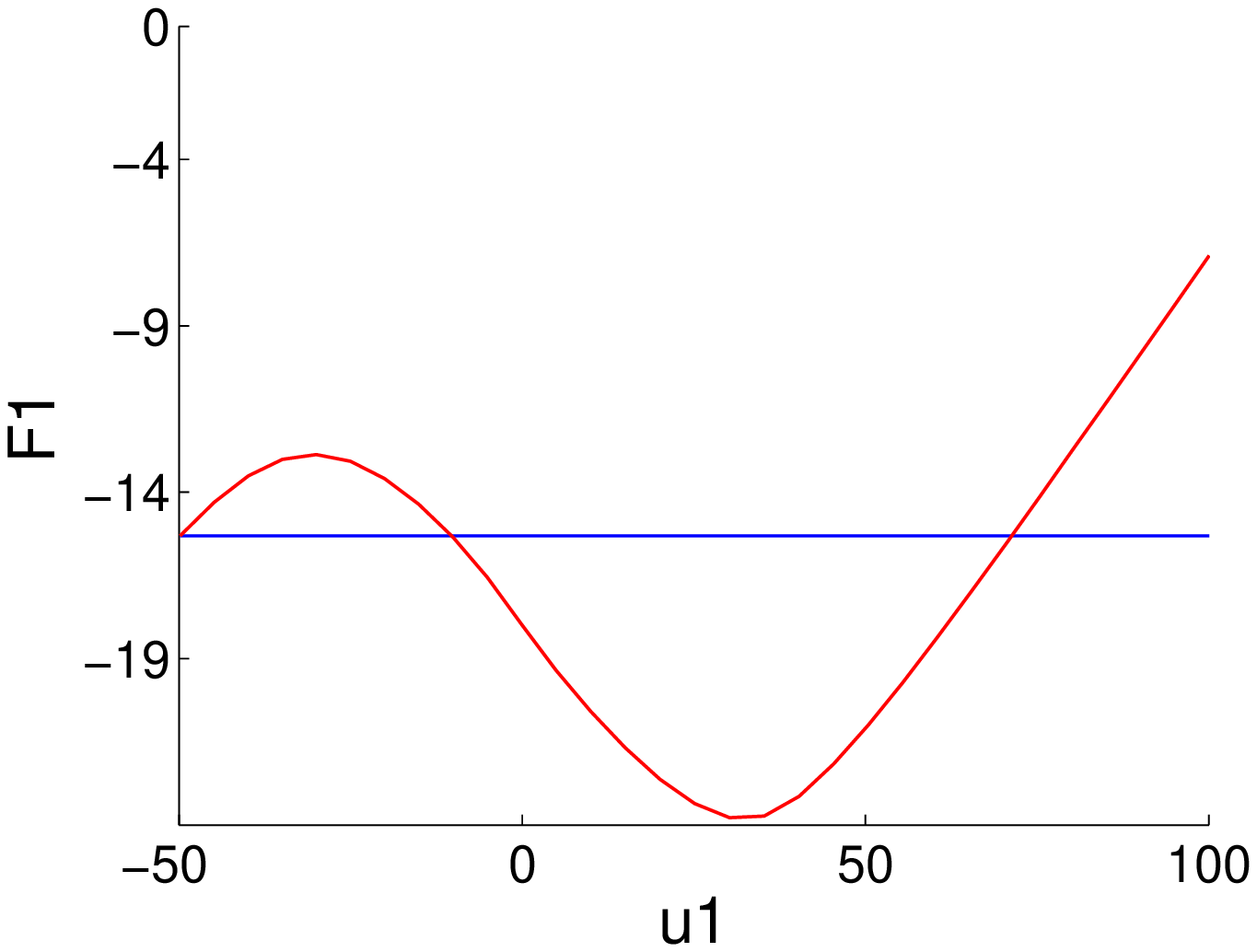} }}
\caption{Non-convex $f$}  \label{fig:NC}
\end{figure}
Things get more interesting if we relax the condition that $f$ be convex.
In Figure~\ref{fig:NC} we analyze the situation in which a non-convex $f'$ interacts only with $\lambda_1$.
For  $u_0=-50\,\varphi_1+10\,\varphi_2$ and $g=F(u_0)$,  the equation $F(u)=g$ has three distinct solutions, displayed in Figure~\ref{fig:NCsols}.
\begin{figure}[!h]
\begin{center}$
\begin{array}{ccc}
\subfigure{\includegraphics[width=.3\linewidth]{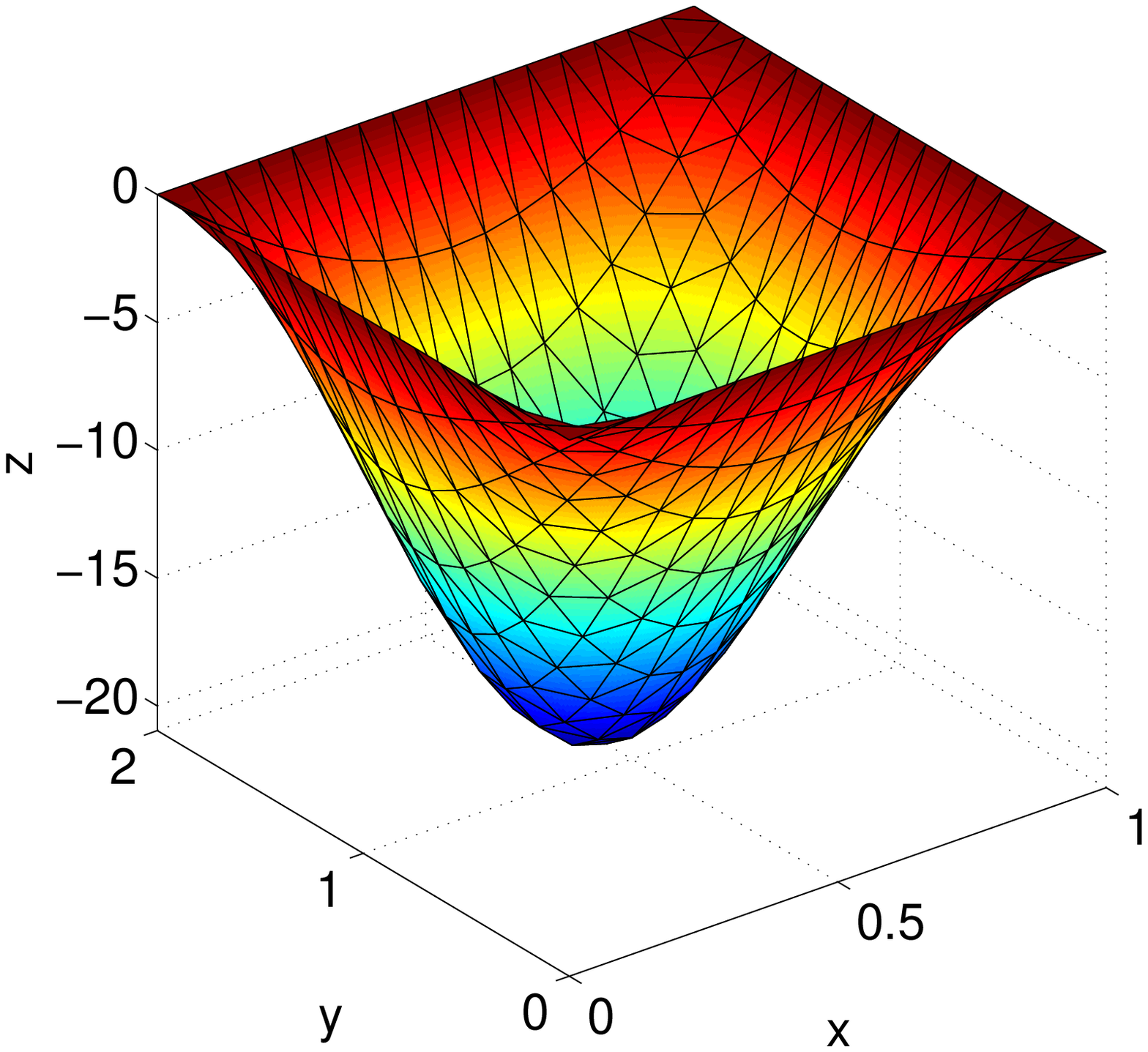}} &
\subfigure{\includegraphics[width=.3\linewidth]{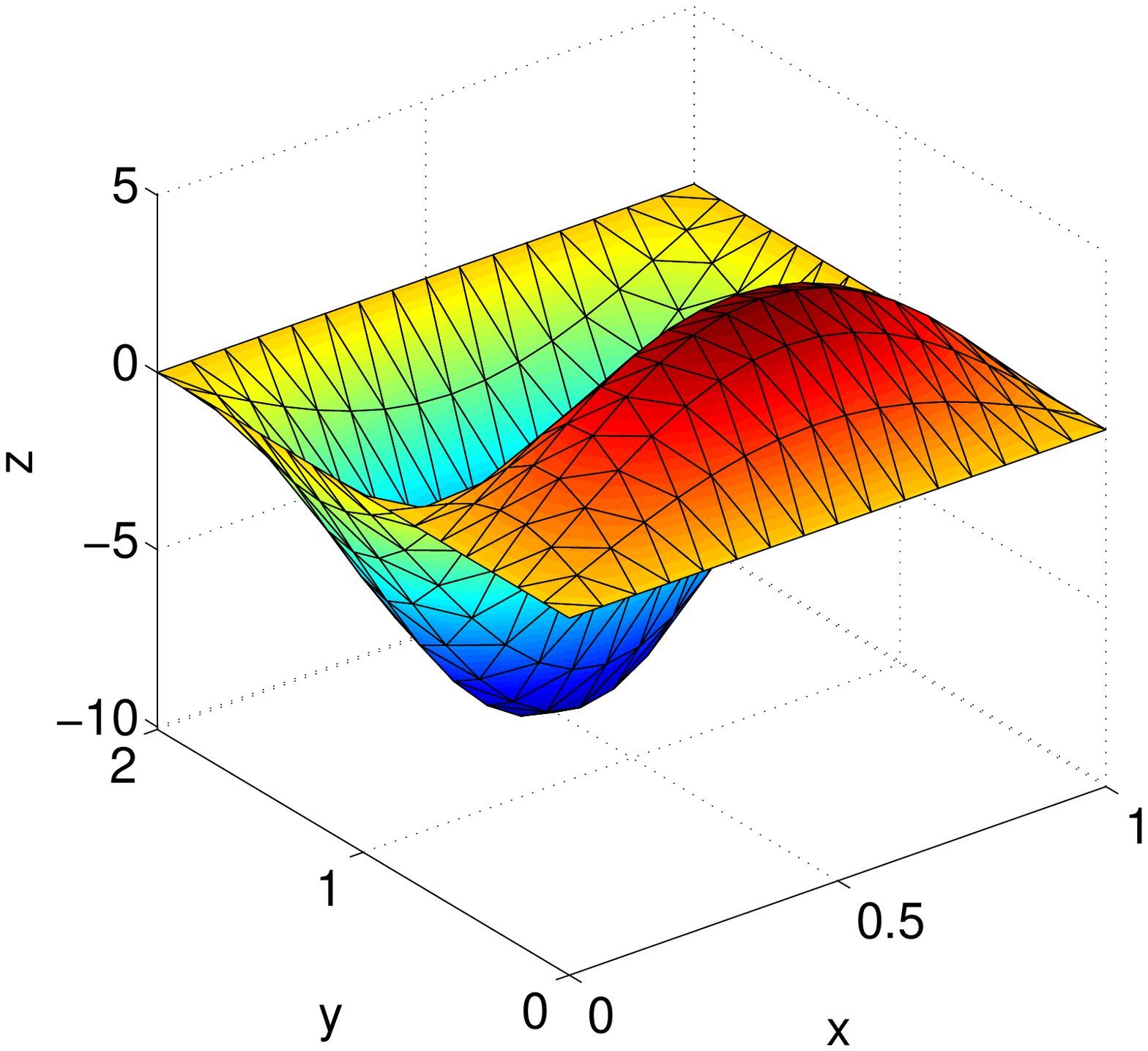}} &
\subfigure{\includegraphics[width=.3\linewidth]{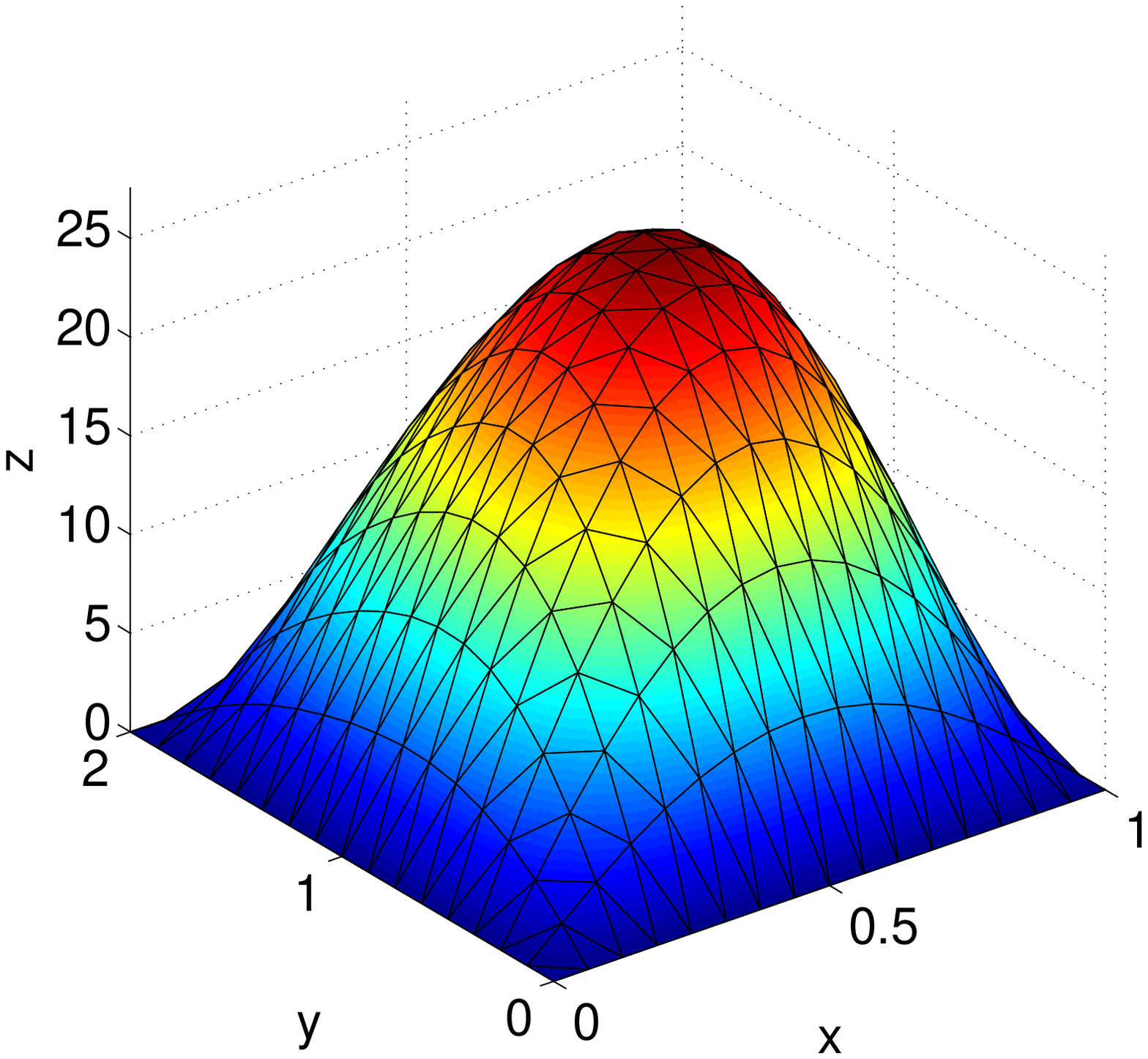}}
\end{array}$
\end{center}
\caption{The three solutions}
\label{fig:NCsols}
\end{figure}

The frames in Figure \ref{fig:NCfibseq} show that the action of $F$ on fibers is \emph{not} homogeneous. The plots show the images under $F$ of fibers $\alpha_{g_i}$ with $g_i=F(-50\varphi_1+c_i\varphi_2)$, for $c_1=10$ (same as Fig. \ref{fig:NC}), $c_2=45$ and $c_3=100$.
\begin{figure}[!h]
\begin{center}
\includegraphics[width=.3\linewidth]{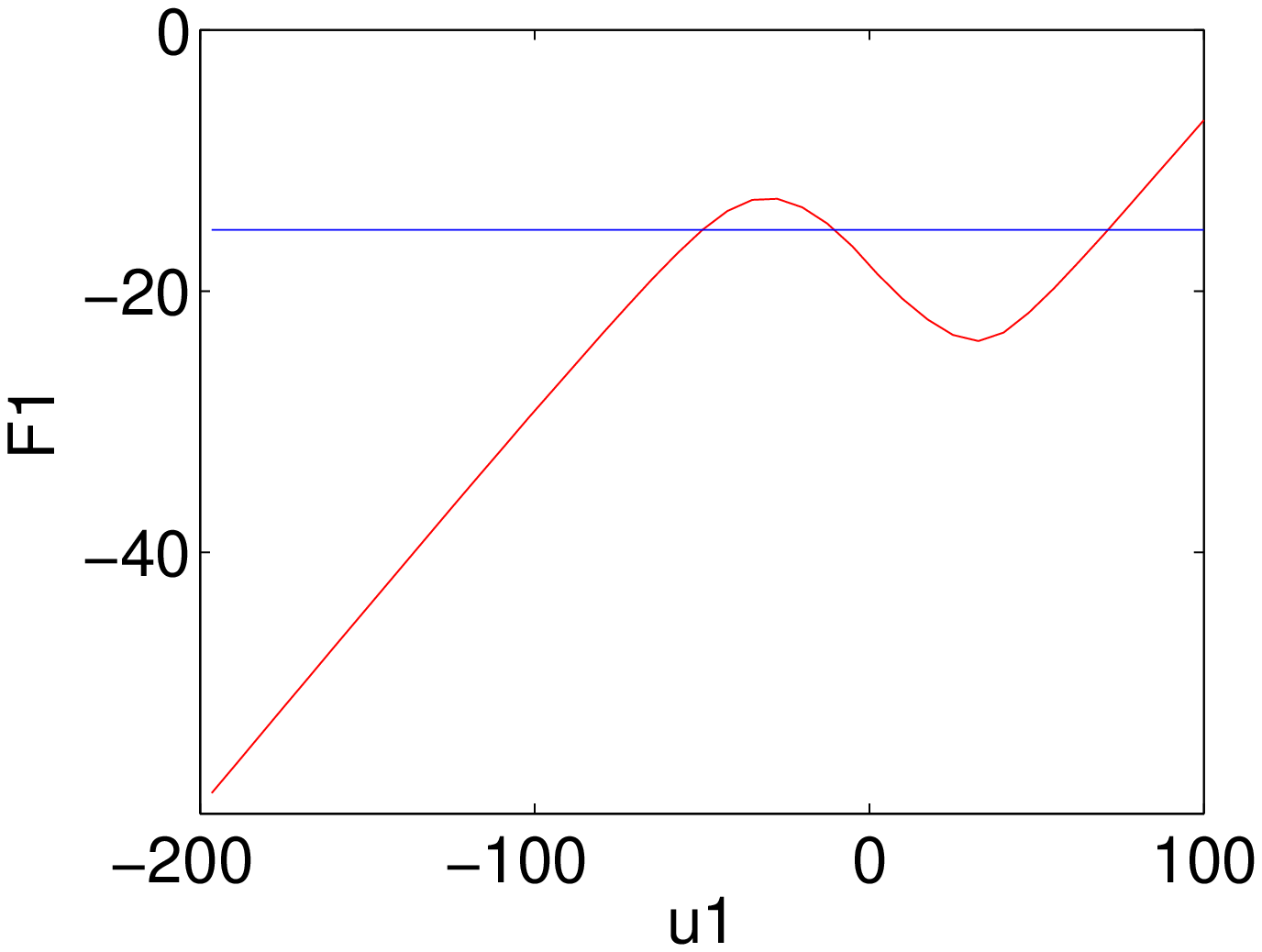}
\includegraphics[width=.3\linewidth]{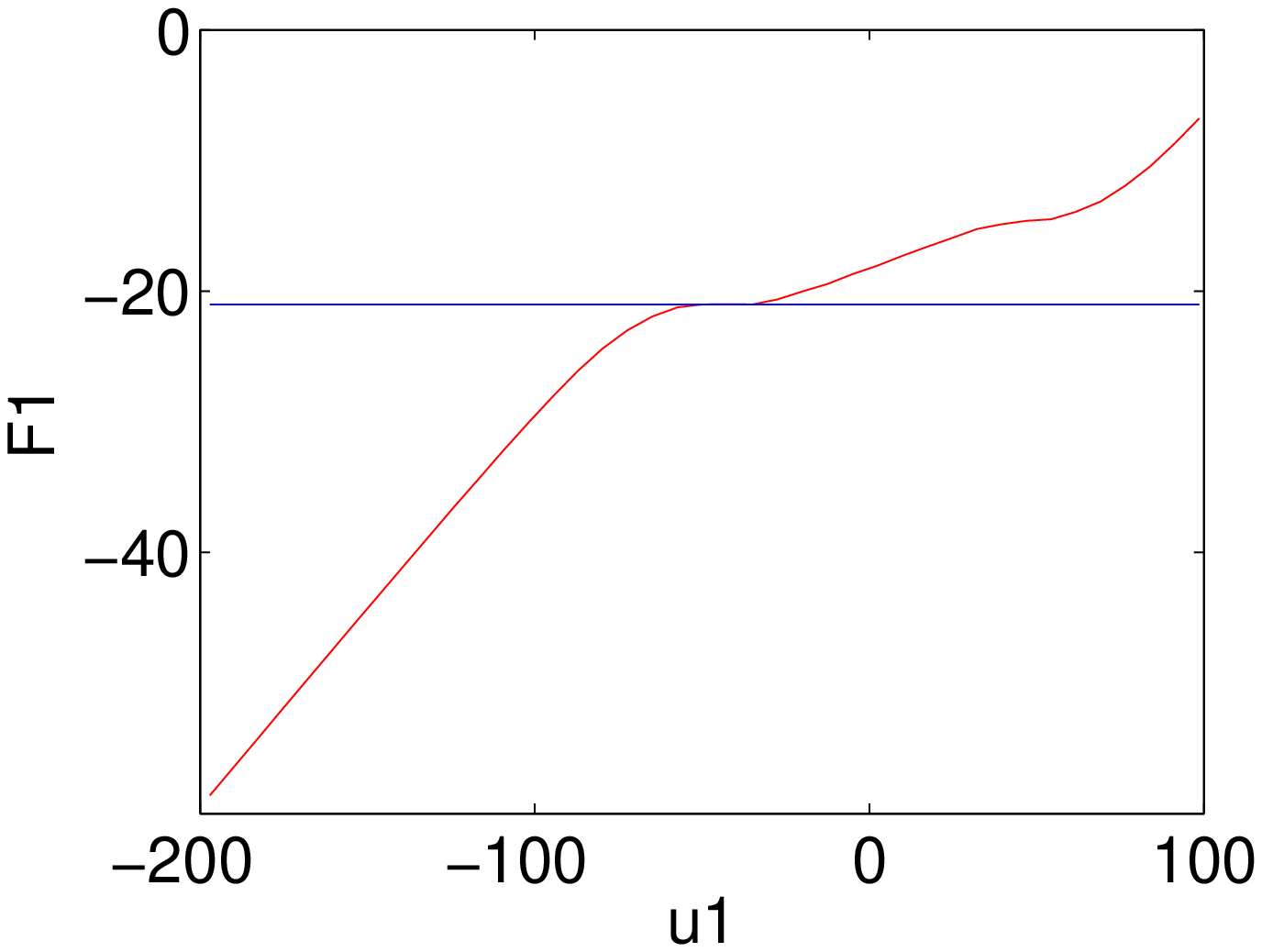}
\includegraphics[width=.3\linewidth]{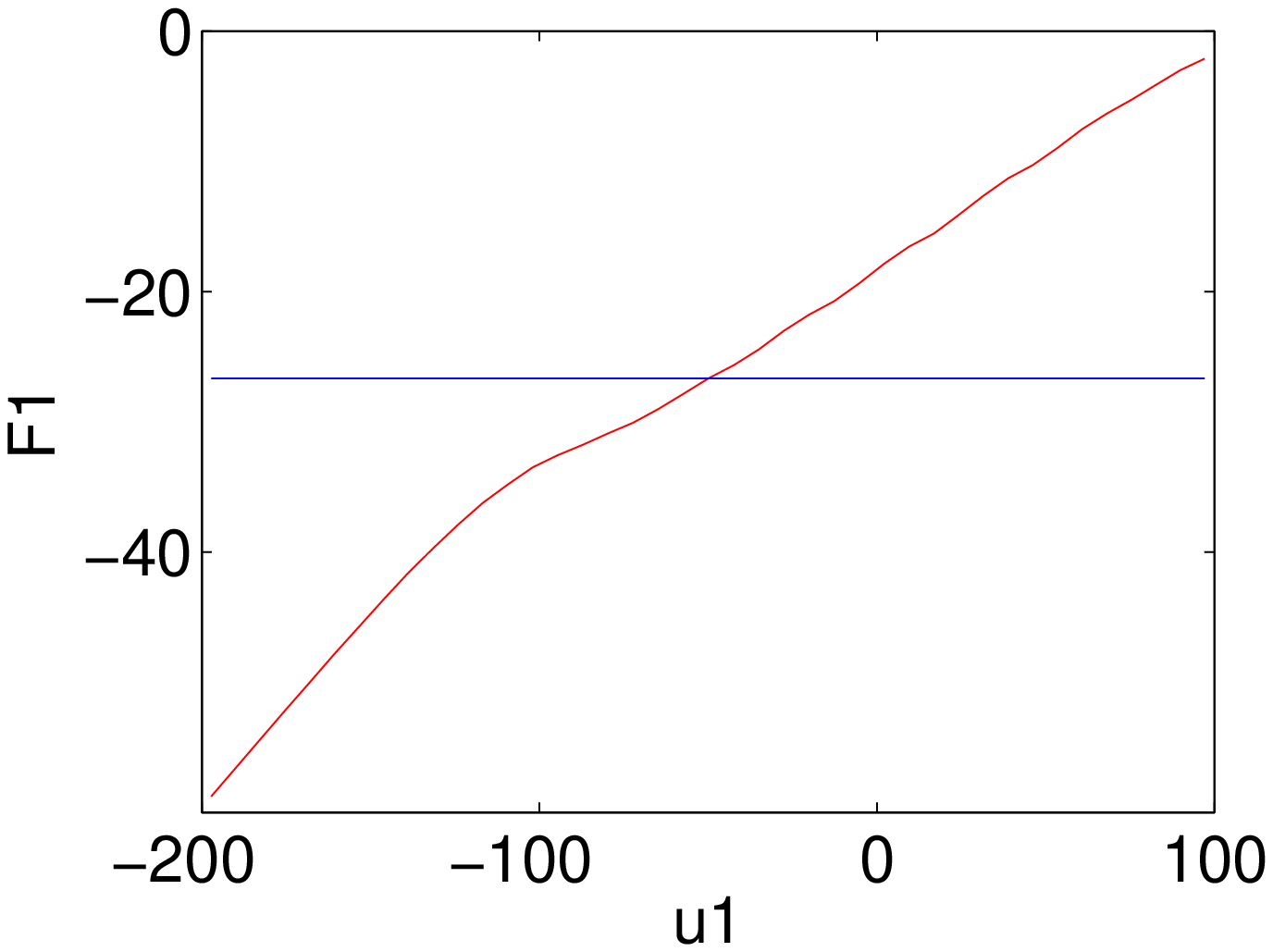}
\caption{Fibers getting mapped non-uniformly}
\label{fig:NCfibseq}
\end{center}
\end{figure}

\subsubsection{Convex $f$, $\mathbf{  {\calK}=\{2\}}$}
\begin{figure}[!h]
\centering
\mbox{\subfigure{\includegraphics[width=.45\linewidth]{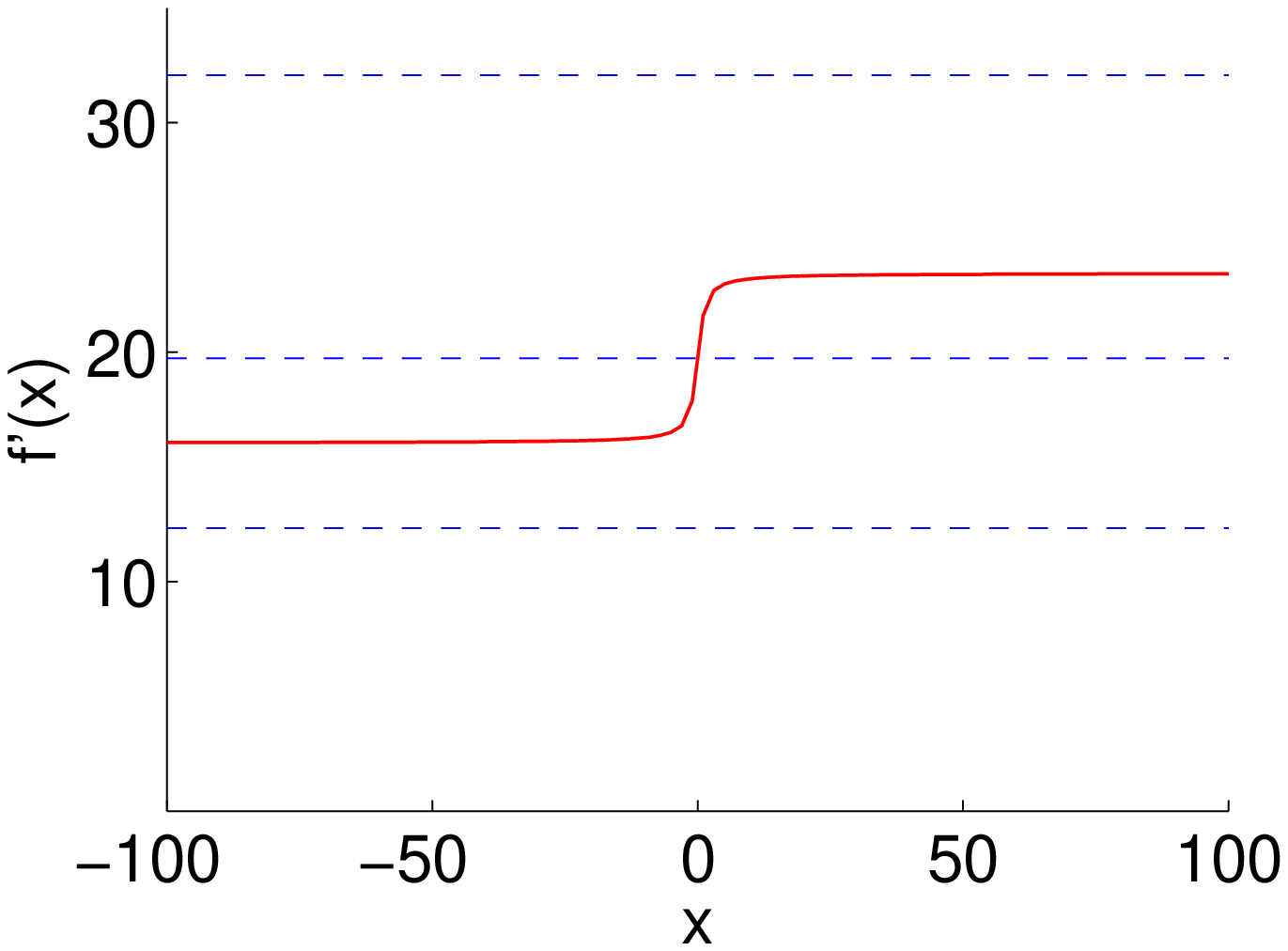}}\quad\quad
\subfigure{\includegraphics[width=.45\linewidth]{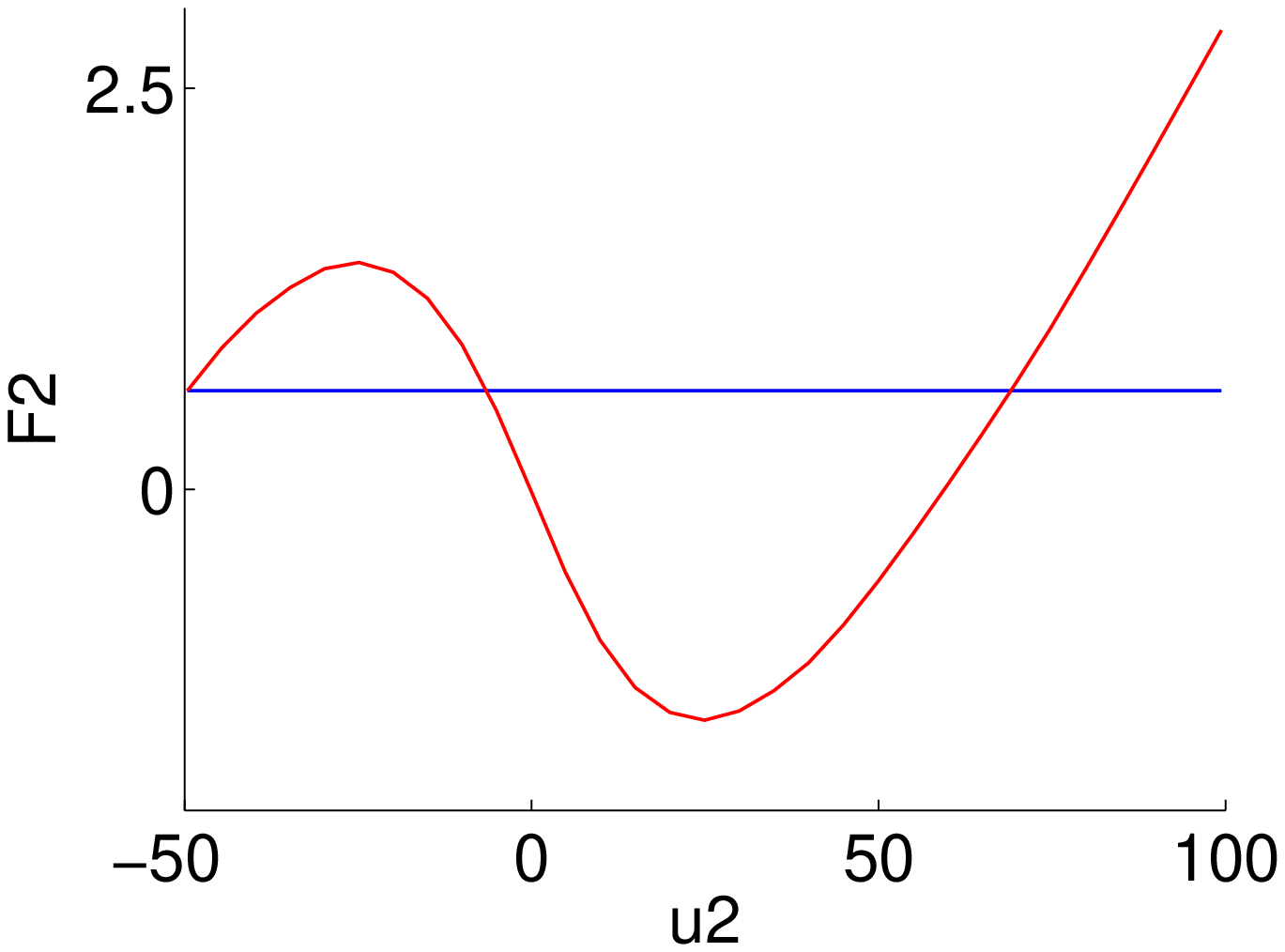} }}
\caption{$\overline{f'(\RR)}\cap\spectrum{-\lap}=\{\lambda_2\} = {\calK}$}  \label{fig:APl2}
\end{figure}
We take $f$ convex,
$\Range f' = \left( \lambda_2 - \frac{\lambda_2 - \lambda_1}{2}\, ,\,
\lambda_2 + \frac{\lambda_2 - \lambda_1}{2}\right)$
and $u_0=-50\,\varphi_2+10\,\varphi_1$.
In Figure \ref{fig:APl2}, heights along the fiber and its image are measured with respect to the second eigenfunction $\varphi_2$.
Now, for $g=F(u_0)$, there are \emph{three} preimages, shown in
Figure~\ref{fig:APl2sols}.
Numerical evidence suggests uniform action of $F$  across fibers.

\begin{figure}[!h]
\begin{center}$
\begin{array}{ccc}
\subfigure{\includegraphics[width=.3\linewidth]{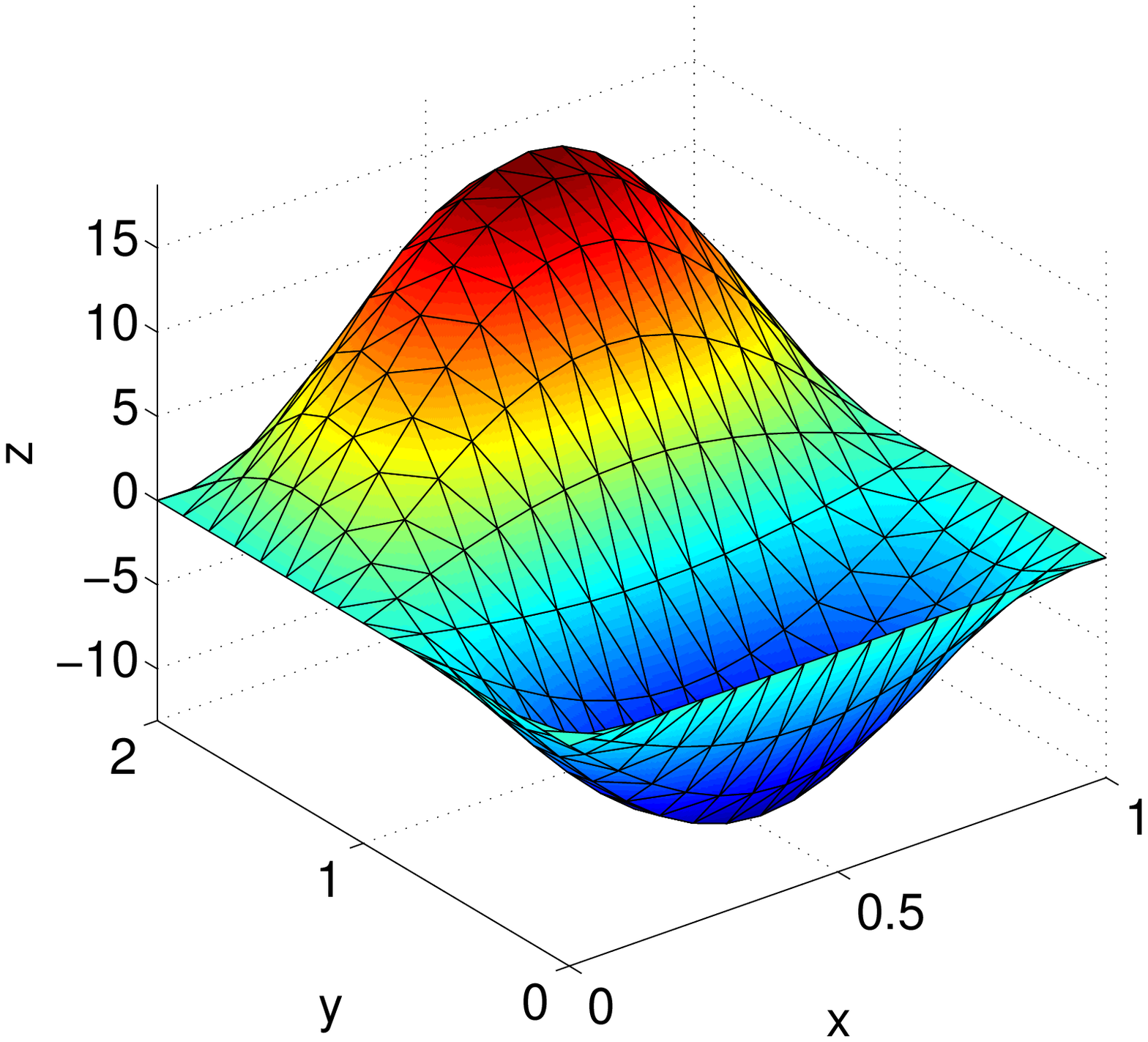}} &
\subfigure{\includegraphics[width=.3\linewidth]{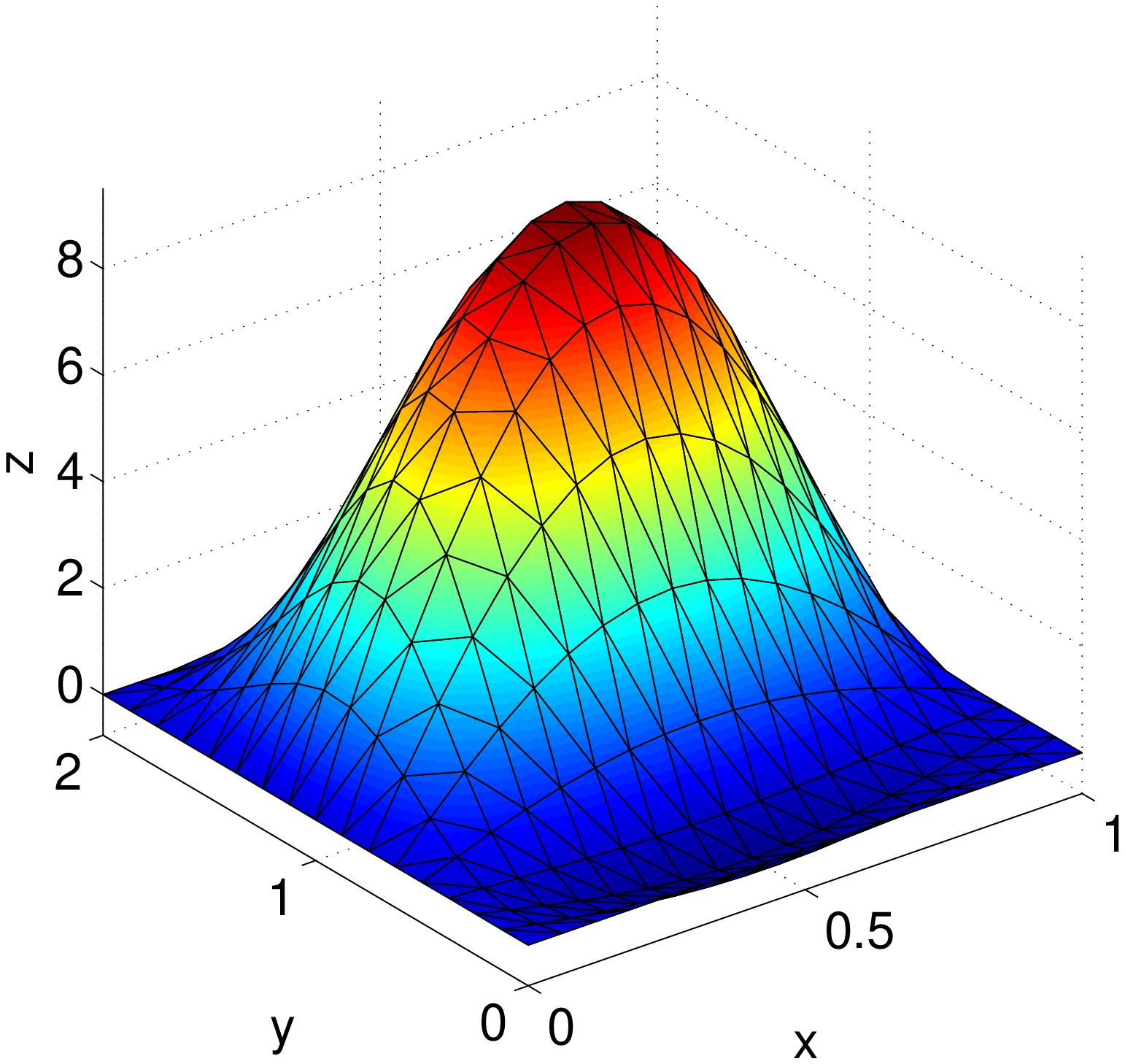}} &
\subfigure{\includegraphics[width=.3\linewidth]{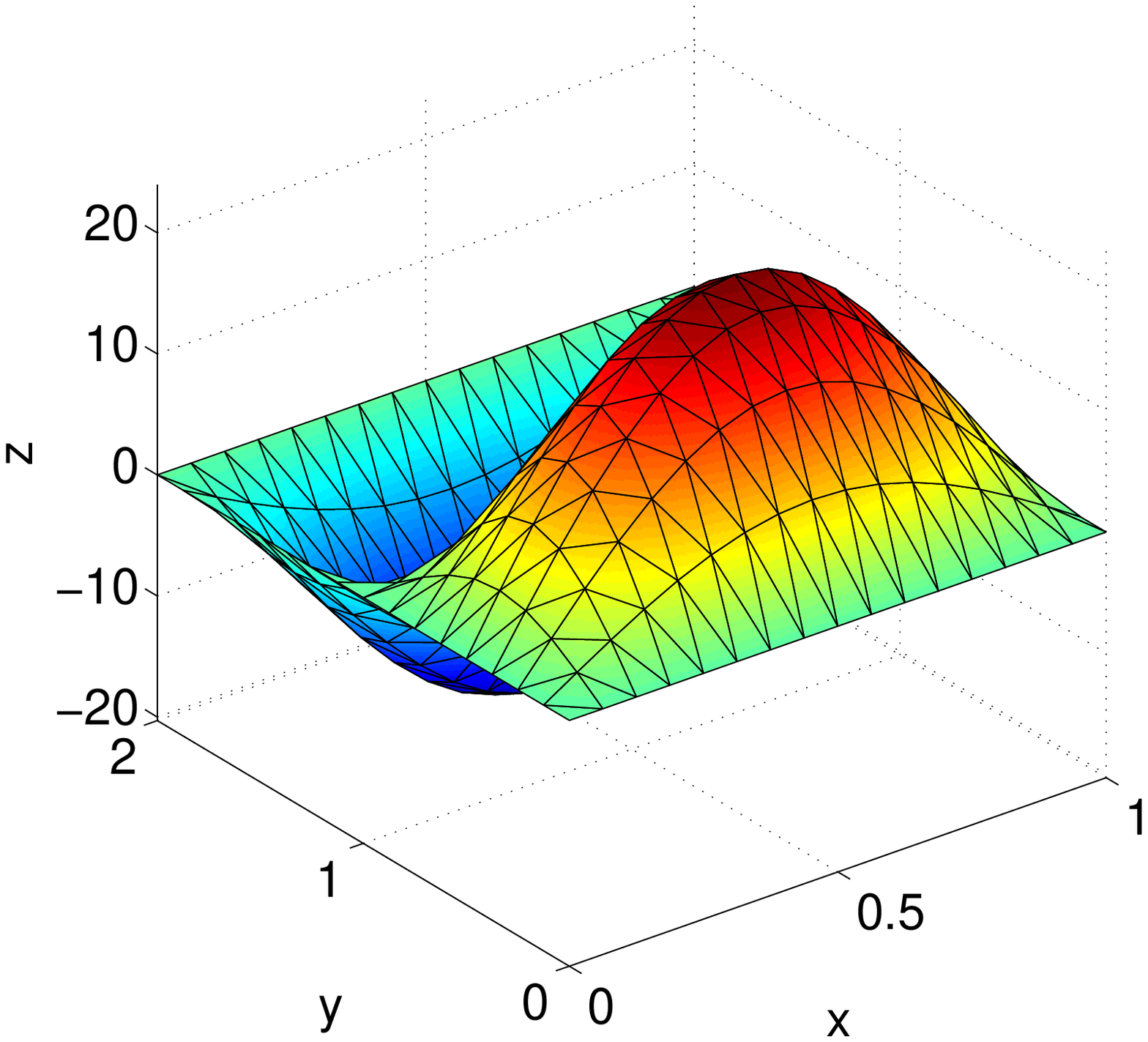}}
\end{array}$
\end{center}
\caption{Convex $f$, $\calK=\{2\}$: the three solutions}
\label{fig:APl2sols}
\end{figure}

\subsection{A two dimensional fiber: ${\calK}= \{1,2\}$}\label{subsec:int2lams}
We now try to visualize the action of $F$ on a two-dimensional fiber.
\begin{figure}[!h]
\begin{center}
\includegraphics[width=.5\linewidth]{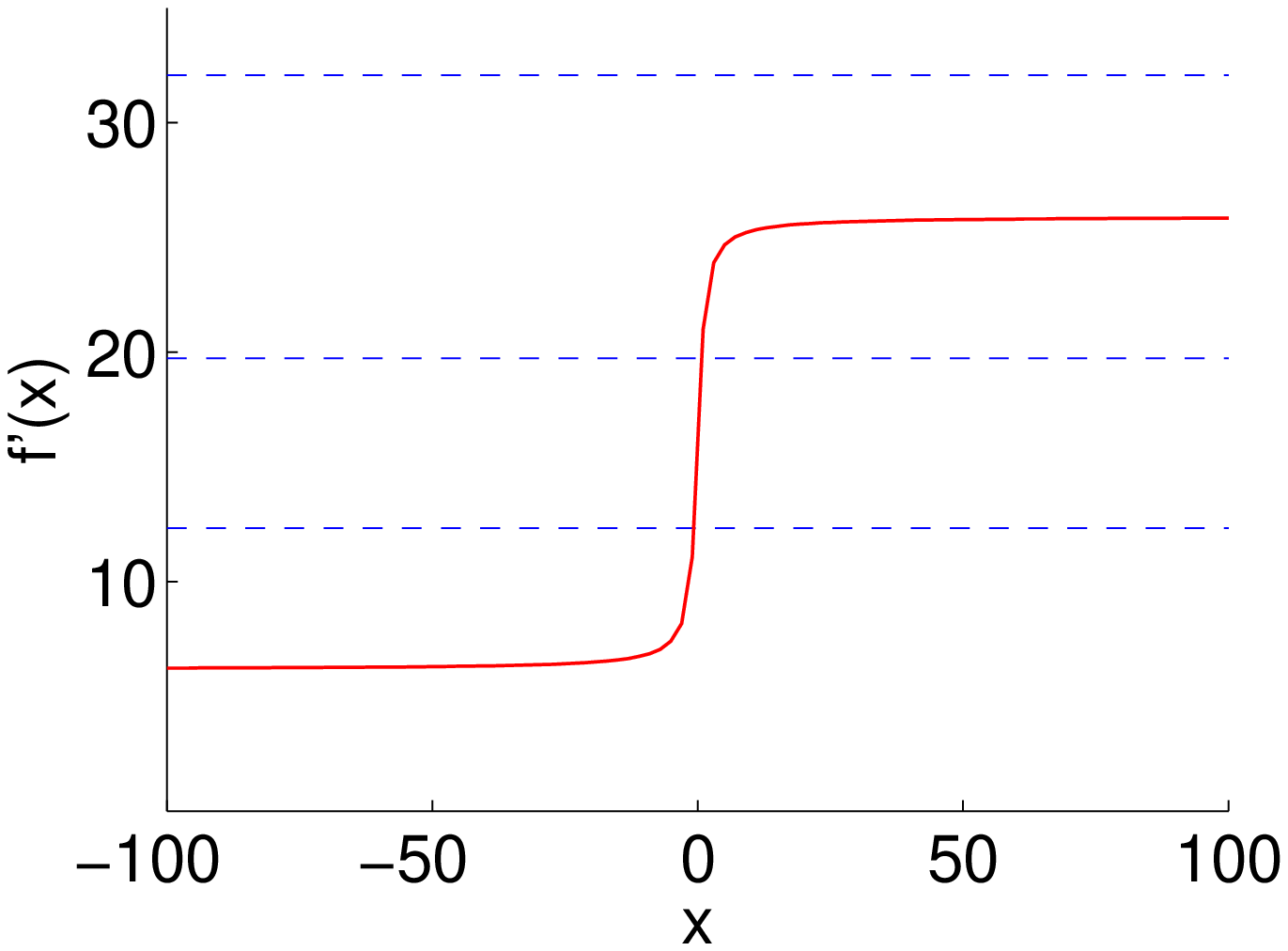}
\end{center}
\caption{Convex $f$, $\calK=\{1, 2\}$}
\label{fig:df2eigs}
\end{figure}

More specifically, we examine the fiber $\alpha_0$ through the zero function,
for which $F(0) = 0$.
Consider the circle $C$ in $\Vp$, the vertical plane spanned by $\varphi_1$ and $\varphi_2$, shown in Figure \ref{fig:UpDown}.
Let $C_{\alpha} \subset\alpha_0$ be the curve $\mathcal{H}_0(C)$, which projects bijectively under
$\Qp$ to $C$.
\begin{figure}[!h]
\begin{center}$
\begin{array}{cc}
\subfigure{\includegraphics[width=.5\linewidth]{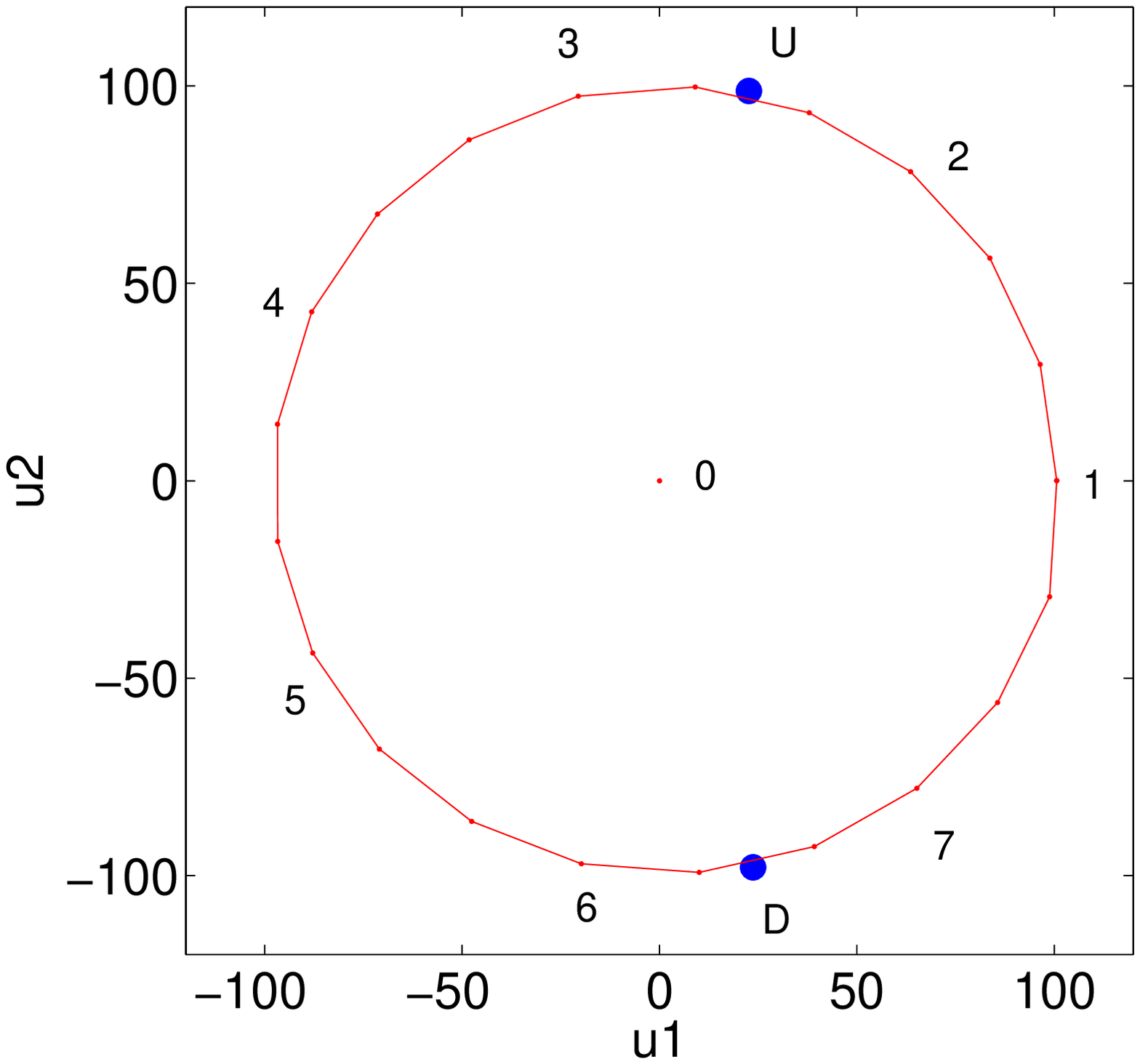}} &
\subfigure{\includegraphics[width=.5\linewidth]{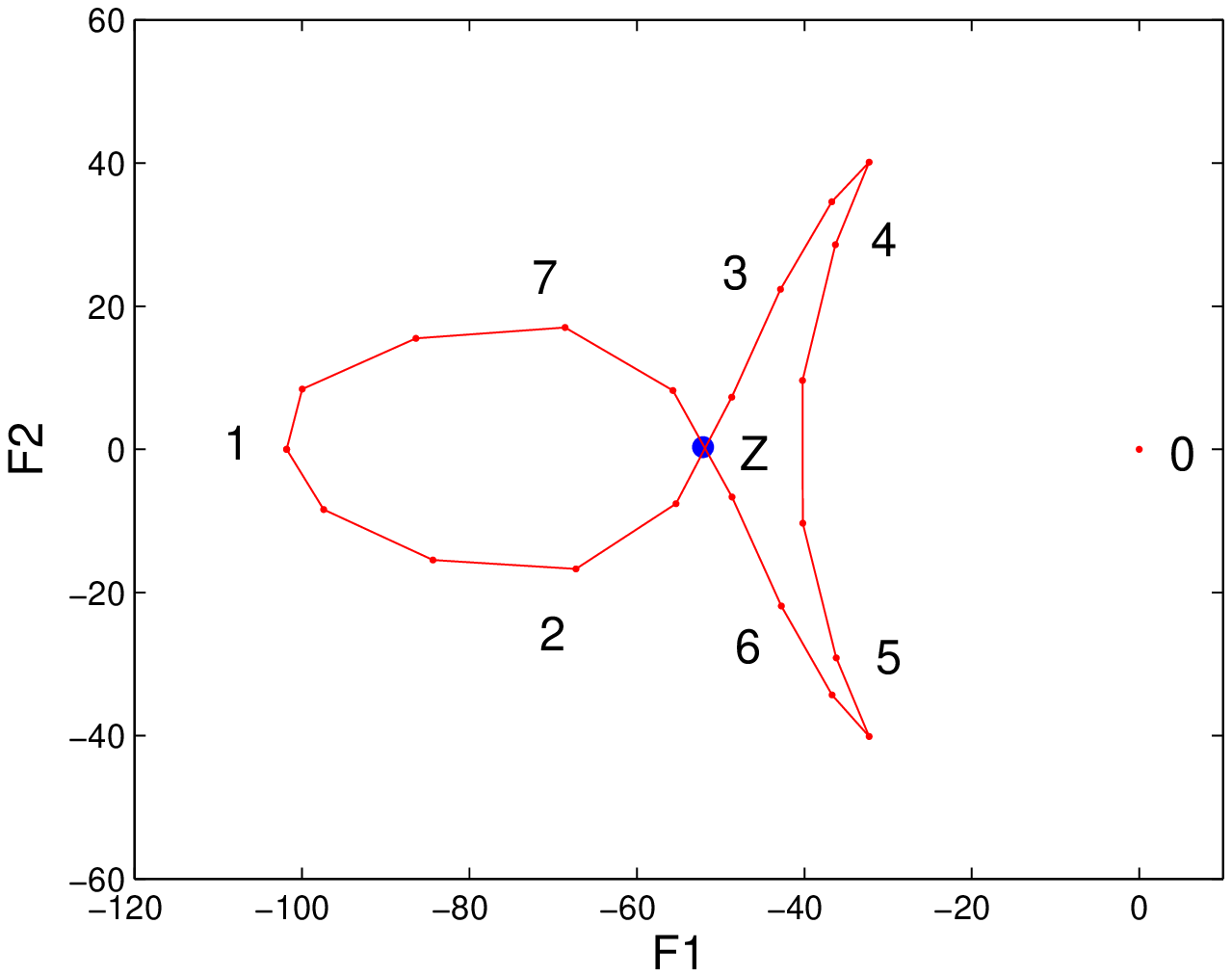}}
\end{array}$
\end{center}
\caption{Solutions U and D on the circle $C$ and images.}
\label{fig:UpDown}
\end{figure}

The fish-shaped curve in Figure \ref{fig:UpDown} is the projection of $F(C_{\alpha})$ under $\Qd$ in \Vd. Seven points and their images were given common labels. Let $g$, marked with a bullet, be the point of self-intersection of this curve.
Clearly $g$ has two preimages $U$ and $D$ between points 2 and 3
and 6 and 7, respectively.
Radial lines in the domain from the origin to points in $C$ give rise to lines from $F(0)=0$ to points in $F(C_{\alpha})$, as seen in Figure \ref{fig:LeftRight}. We
then obtain two approximate preimages $L$ and $R$ along the horizontal axis.

The four approximate preimages were then taken as initial guesses for Newton's Method and the four computed solutions are illustrated in Figure \ref{fig:UDLR}.
\begin{figure}[!h]
\begin{center}$
\begin{array}{cc}
\subfigure{\includegraphics[width=.5\linewidth]{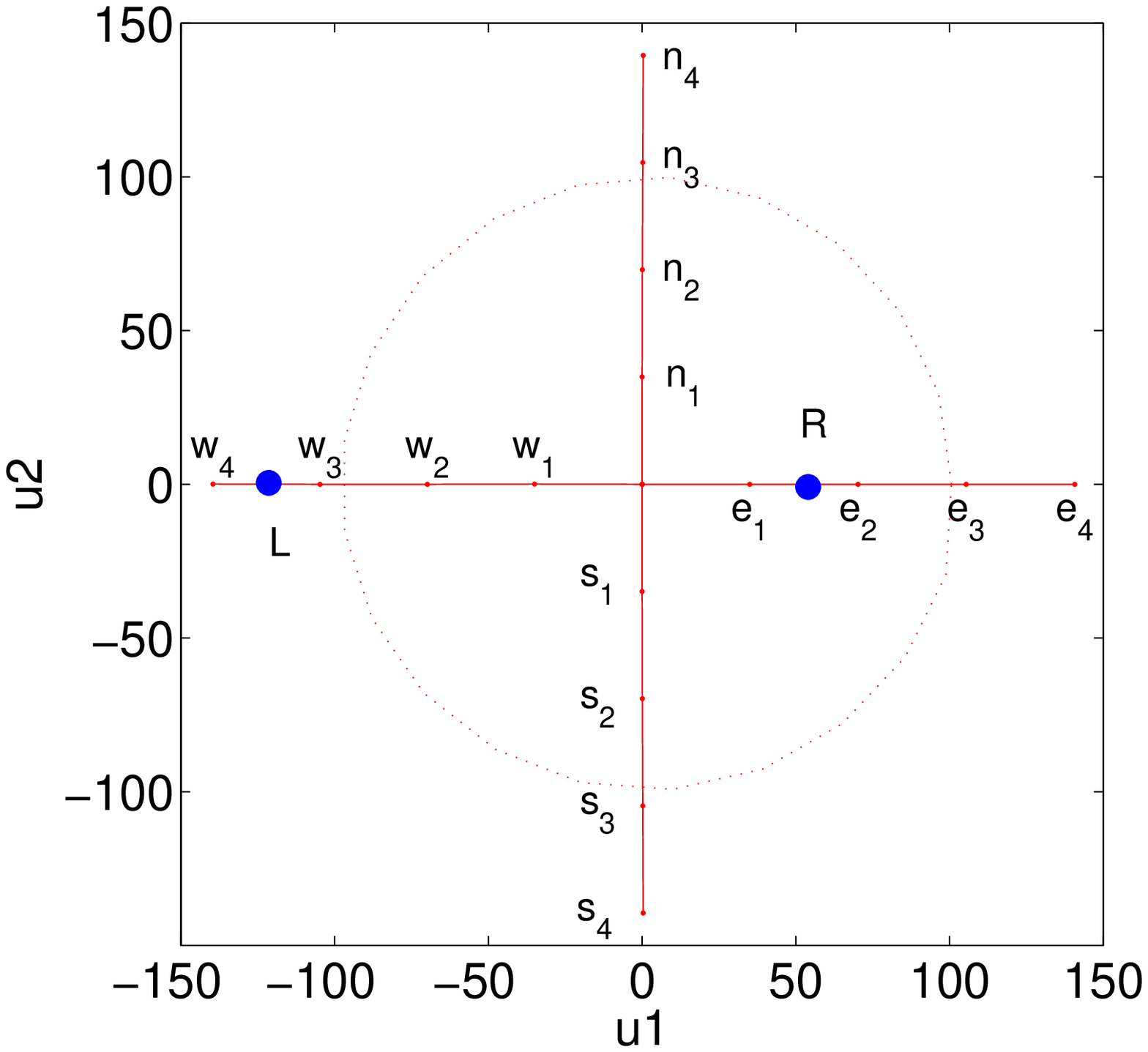}} &
\subfigure{\includegraphics[width=.5\linewidth]{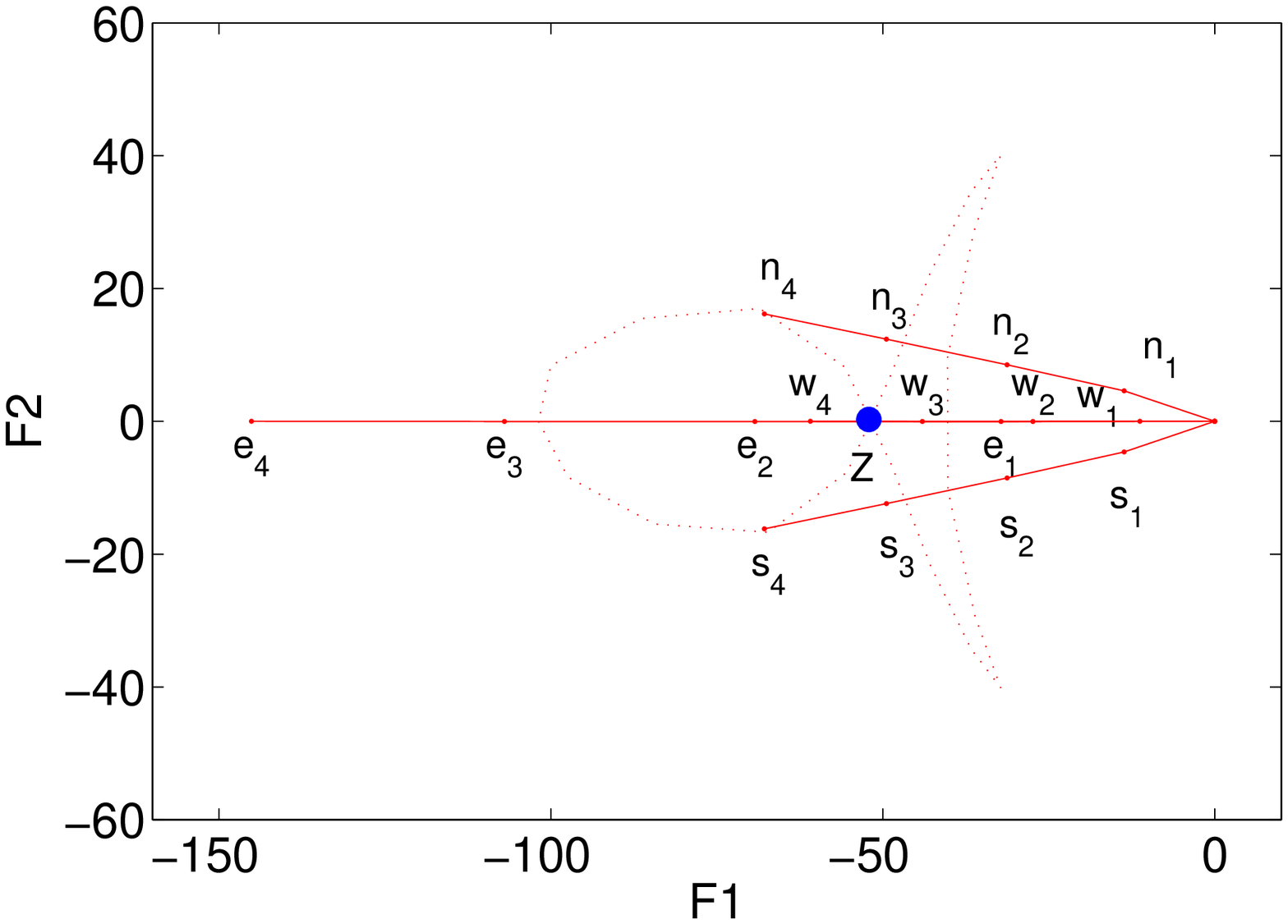}}
\end{array}$
\end{center}
\caption{Solutions L and  R on the u1 axis}
\label{fig:LeftRight}
\end{figure}

\begin{figure}[!h]
\begin{center}
\subfigure[$U$ and $D$]{\includegraphics[width=.4\linewidth]{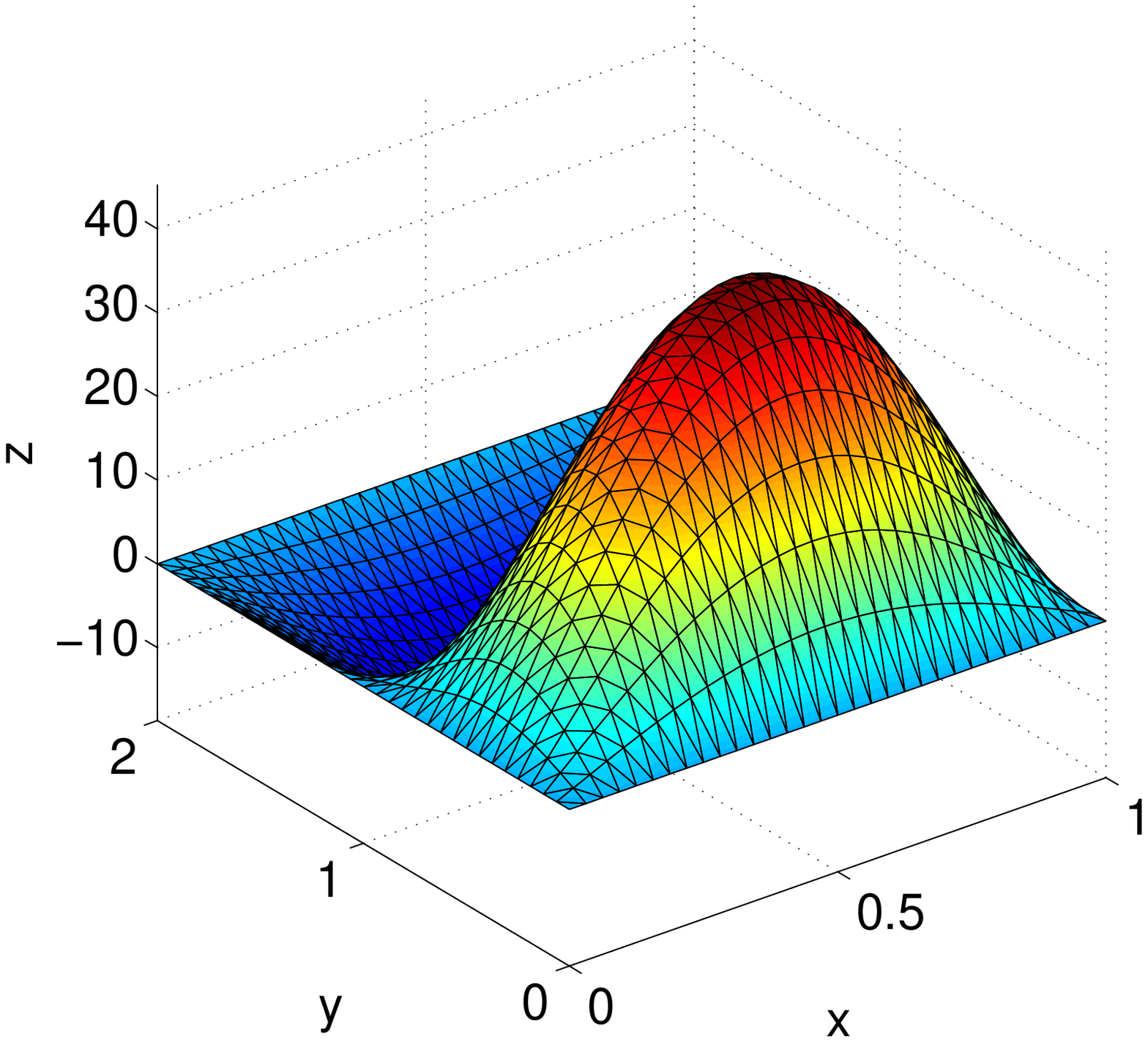}
\includegraphics[width=.4\linewidth]{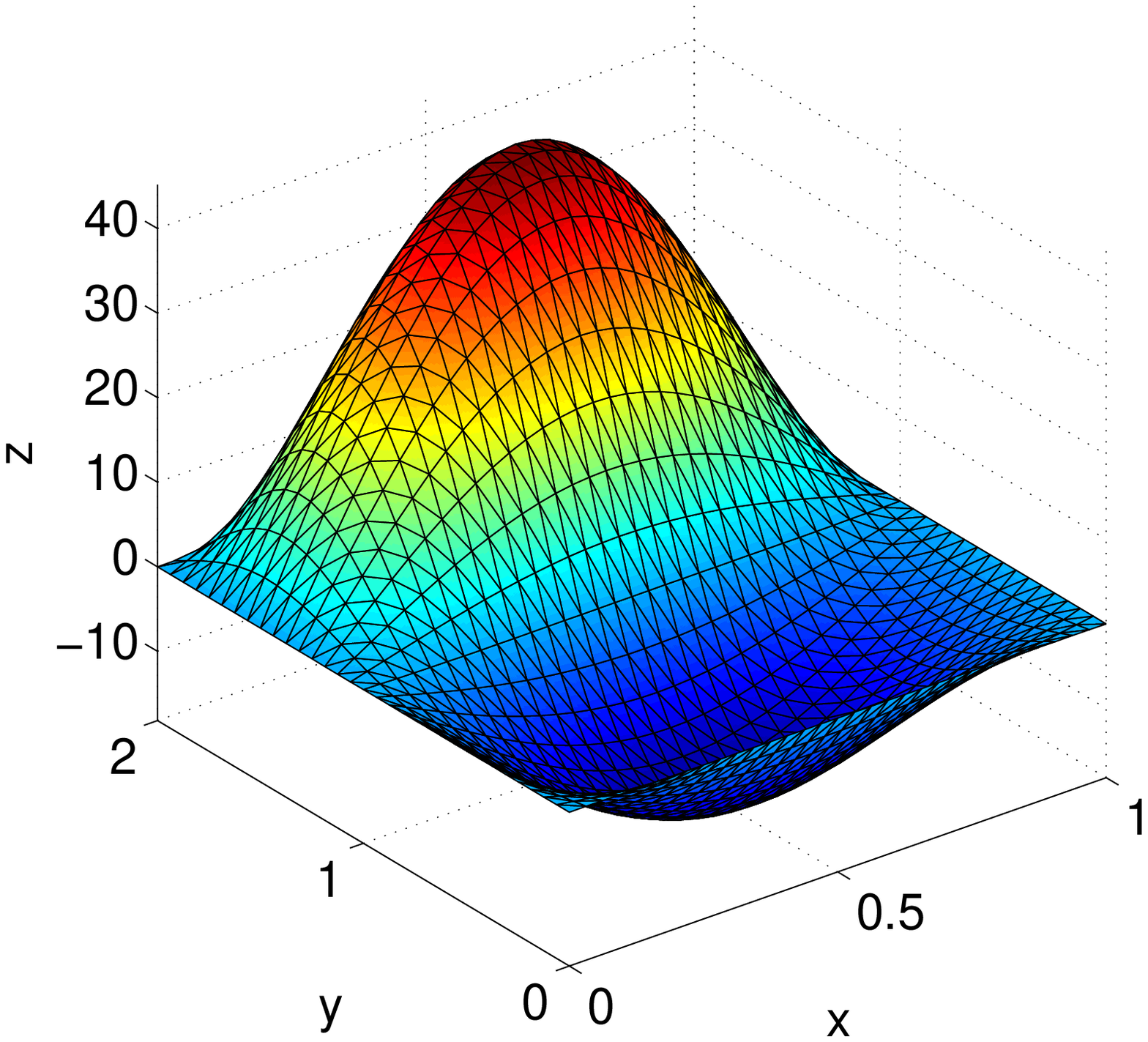}}
\subfigure[$L$ and $R$]{\includegraphics[width=.4\linewidth]{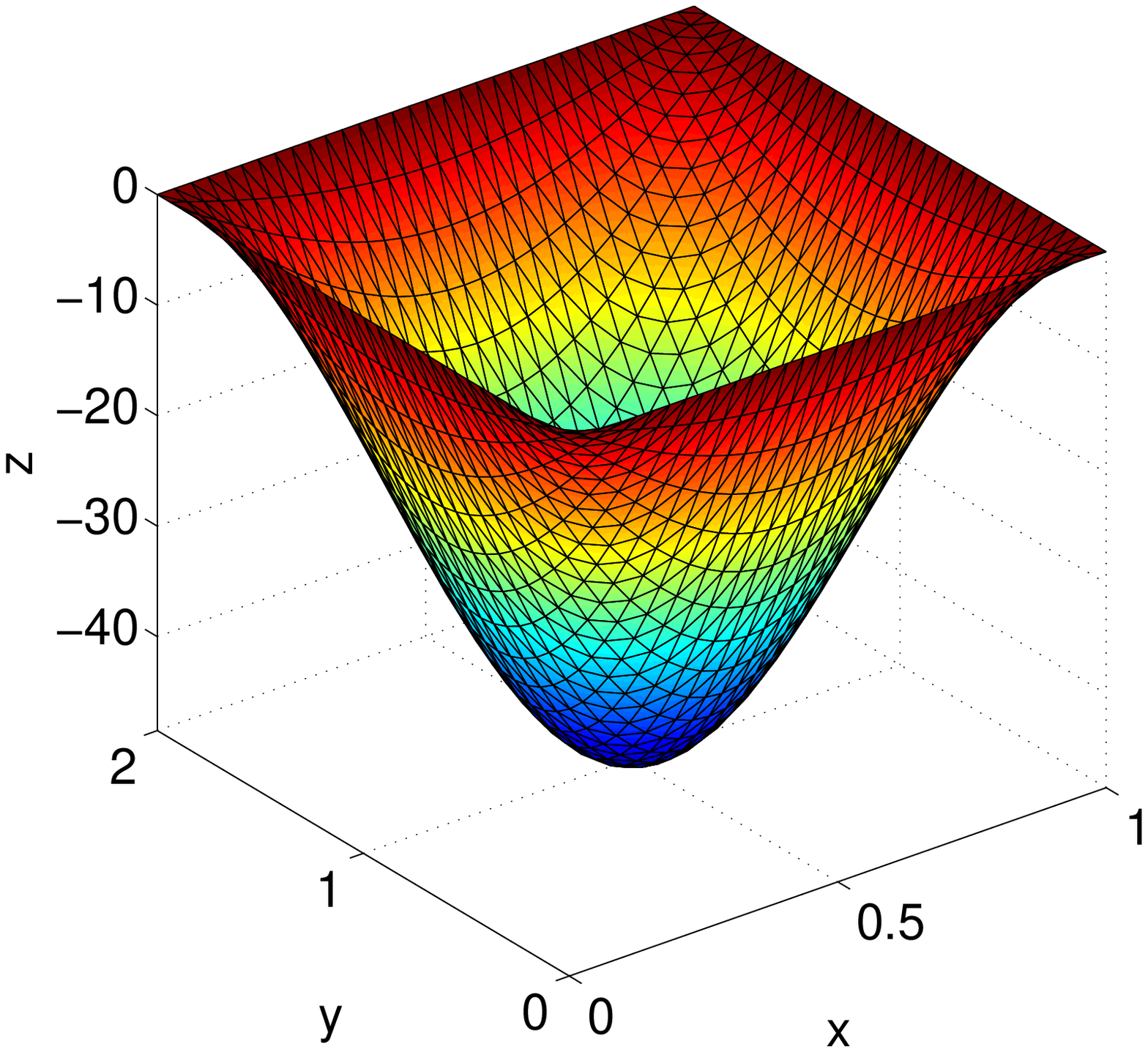}
\includegraphics[width=.4\linewidth]{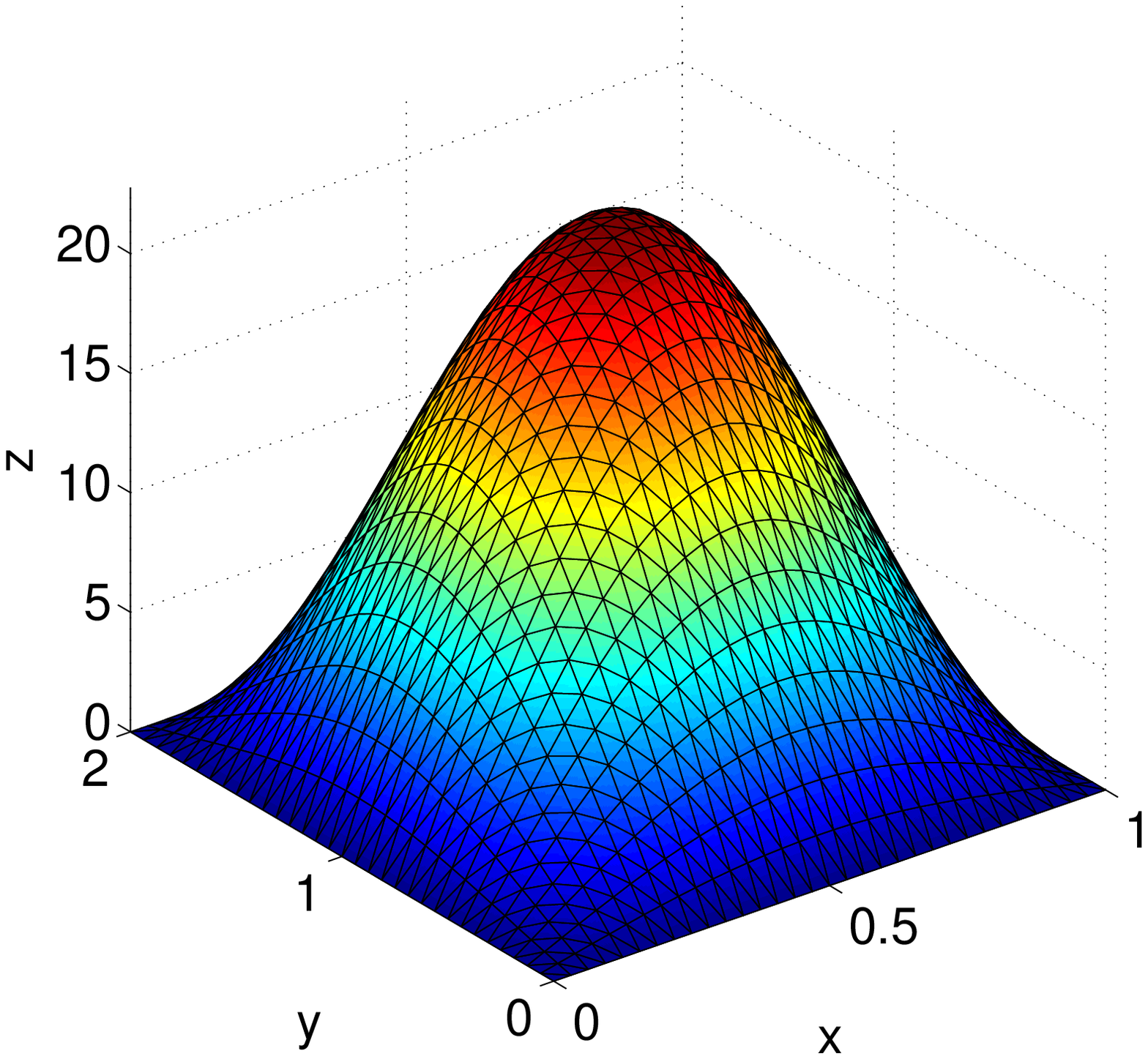}}
\end{center}
\caption{Computed Solutions, 2-D case}
\label{fig:UDLR}
\end{figure}
\bibliography{thesis}

\begin{thebibliography}{10}

\bibitem{ambrosetti:1972}
{\sc A.~Ambrosetti and G.~Prodi}, {\em On the inversion of some differentiable
  mappings with singularities between {B}anach spaces}, Ann. Mat. Pura Appl.
  (4), 93 (1972), pp.~231--246.

\bibitem{ambprod:book}
\leavevmode\vrule height 2pt depth -1.6pt width 23pt, {\em A primer of
  nonlinear analysis}, vol.~34 of Cambridge Studies in Advanced Mathematics,
  CUP, Cambridge, 1995.

\bibitem{berger:book}
{\sc M.~S. Berger}, {\em Nonlinearity and functional analysis}, Lectures on
  nonlinear problems in mathematical analysis, Academic Press, New York, 1977.

\bibitem{berger:1974}
{\sc M.~S. Berger and E.~Podolak}, {\em On the solutions of a nonlinear
  {D}irichlet problem}, Indiana Univ. Math. J., 24 (1974), pp.~837--846.

\bibitem{bmp:2003}
{\sc B.~Breuer, P.~J. McKenna, and M.~Plum}, {\em Multiple solutions for a
  semilinear boundary value problem: a computational multiplicity proof}, J.
  Diff. Eqs., 195 (2003), pp.~243--269.

\bibitem{choimckenna:1993}
{\sc Y.~S. Choi and P.~J. McKenna}, {\em A mountain pass method for the
  numerical solution of semilinear elliptic problems}, Nonlinear Anal., 20
  (1993), pp.~417--437.

\bibitem{costa:1981}
{\sc D.~G. Costa, F.~Silva, and J.~Santos~Filho}, {\em Métodos de Análise
  Funcional Aplicados a Equações Diferenciais}, $13^{\textrm{o}}$ Colóquio
  Brasileiro de Matemática, IMPA, 1981.

\bibitem{dolph:1949}
{\sc C.~L. Dolph}, {\em Nonlinear integral equations of the {H}ammerstein
  type}, Trans. Amer. Math. Soc., 66 (1949), pp.~289--307.

\bibitem{hammerstein:1930}
{\sc A.~Hammerstein}, {\em Nichtlineare {I}ntegralgleichungen nebst
  {A}nwendungen}, Acta Math., 54 (1930), pp.~117--176.

\bibitem{hess:1980}
{\sc P.~Hess}, {\em On a nonlinear elliptic boundary value problem of the
  {A}mbrosetti-{P}rodi type}, Boll. Un. Mat. Ital. A (5), 17 (1980),
  pp.~187--192.

\bibitem{MST:1997}
{\sc I.~Malta, N.~C. Saldanha, and C.~Tomei}, {\em Morin singularities and
  global geometry in a class of ordinary differential operators}, Topol.
  Methods Nonlinear Anal., 10 (1997), pp.~137--169.

\bibitem{manes:1973}
{\sc A.~Manes and A.~M. Micheletti}, {\em Un'estensione della teoria
  variazionale classica degli autovalori per operatori ellittici del secondo
  ordine}, Boll. Un. Mat. Ital. (4), 7 (1973), pp.~285--301.

\bibitem{plum:2009}
{\sc M.~Plum}, {\em Computer-assisted proofs for semilinear elliptic boundary
  value problems}, Japan J. Indust. Appl. Math., 26 (2009), pp.~419--442.

\bibitem{podolak:1976}
{\sc E.~Podolak}, {\em On the range of operator equations with an
  asymptotically nonlinear term}, Indiana Univ. Math. J., 25 (1976),
  pp.~1127--1137.

\bibitem{rabinowitz:1986}
{\sc P.~H. Rabinowitz}, {\em Minimax methods in critical point theory with
  applications to differential equations}, vol.~65 of CBMS Regional Conference
  Series in Mathematics, CBMS, Washington, DC, 1986.

\bibitem{smiley:1996}
{\sc M.~W. Smiley}, {\em A finite element method for computing the bifurcation
  function for semilinear elliptic {BVP}s}, J. Comput. Appl. Math., 70 (1996),
  pp.~311--327.

\bibitem{smiley:1998}
\leavevmode\vrule height 2pt depth -1.6pt width 23pt, {\em A principle of
  reduced stability for reaction-diffusion equations}, J. Diff. Eqs., 142
  (1998), pp.~277--290.

\bibitem{smiley:2000}
{\sc M.~W. Smiley and C.~Chun}, {\em Approximation of the bifurcation function
  for elliptic boundary value problems}, Numer. Methods P.D.E., 16 (2000),
  pp.~194--213.

\bibitem{smiley:2001}
\leavevmode\vrule height 2pt depth -1.6pt width 23pt, {\em An algorithm for
  finding all solutions of a nonlinear system}, J. Comput. Appl. Math., 137
  (2001), pp.~293--315.

\bibitem{smiley:2001b}
\leavevmode\vrule height 2pt depth -1.6pt width 23pt, {\em Computation of
  {M}orse decompositions for semilinear elliptic {PDE}s}, Numer. Methods
  P.D.E., 17 (2001), pp.~290--312.

\end{thebibliography}
\bibliographystyle{siam}

\end{document}